\providecommand{\U}[1]{\protect\rule{.1in}{.1in}}
\numberwithin{equation}{section}
\newtheorem{theorem}{Theorem}[section]
\newtheorem{lemma}[theorem]{Lemma}
\newtheorem{corollary}[theorem]{Corollary}
\newtheorem{proposition}[theorem]{Proposition}
\newtheorem{remark}[theorem]{Remark}
\newtheorem{hypothesis}[theorem]{Hypothesis}
\def\R{{\mathbb R}}
\def\E{{\mathbb E}}
\def\P{{\mathbb P}}
\def\Q{{\mathbb Q}}
\def\N{{\mathbb N}}
\def\L{{\mathcal L}}
\def\eps{\varepsilon}
\def\d{{\rm d}}
\def\<{\langle}
\def\>{\rangle}
\begin{document}

\title{On the relation between the Girsanov transform and the Kolmogorov equations for SPDEs}

\author{Franco Flandoli\footnote{Email: franco.flandoli@sns.it. Scuola Normale Superiore, Piazza dei Cavalieri, 7, 56126 Pisa, Italy.}
\quad Dejun Luo\footnote{Email: luodj@amss.ac.cn. Key Laboratory of RCSDS, Academy of Mathematics and Systems Science, Chinese Academy of Sciences, Beijing 100190, China, and School of Mathematical Sciences, University of the Chinese Academy of Sciences, Beijing 100049, China.}
\quad Cristiano Ricci\footnote{Email: cristiano.ricci@sns.it. Scuola Normale Superiore, Piazza dei Cavalieri, 7, 56126 Pisa, Italy.}}
\maketitle

\vskip-20pt

\begin{abstract}
The Girsanov transform and Kolmogorov equations are two useful methods for studying SPDEs. It is shown that, under suitable conditions, the series expansion obtained from the Girsanov transform coincides with the one generated by an iteration scheme for Kolmogorov equations. We also apply the iteration approach to extend the well posedness theory for Kolmogorov equations beyond the boundedness condition on the nonlinear term.
\end{abstract}

\textbf{Keywords:} Kolmogorov equation, Girsanov transform, iteration scheme, series expansion, well posedness

\section{Introduction}

Consider the stochastic equation%
\begin{equation}\label{spde}
\left\{ \aligned
\d X_{t}  & =\left(  AX_{t}+B\left(  X_{t}\right)  \right)  \d t+\sqrt{Q}\,\d W_{t},\\
X_{0}  & =x
\endaligned \right.
\end{equation}
in a Hilbert space $H$, with the norm and inner product $|\cdot|$ and $\<\cdot, \cdot\>$. Here $A$ is the infinitesimal generator of a strongly continuous semigroup $\{e^{tA} \}_{t\geq 0}$ in $H$, $Q$ is a nonnegative self-adjoint operator in $H$ satisfying ${\rm Ker}(Q)=\{0\}$, and $B:H\to H$ is a measurable mapping. Finally, $\{W_t \}_{t\geq 0}$ is a cylindrical Wiener process on $H$, defined on some probability space $(\Omega, \mathcal F, \mathcal F_t, \P)$. The infinite dimensional Kolmogorov equation corresponding to \eqref{spde} is
  \begin{equation}\label{KolE}
  \left\{ \aligned
  \partial_t u(t,x)&= \frac12 {\rm Tr}(Q D^2u(t,x)) + \<Ax+ B(x), Du(t,x)\>, \\
  u(0,x)&= \phi(x),
  \endaligned \right.
  \end{equation}
where $D$ is the spatial derivative operator and $\phi:H\to\R$ is a measurable function.

The underlying linear equation reads as
\begin{equation}\label{linear-eq}
\left\{ \aligned
\d Z^x_{t}  & = AZ^x_{t}\,  \d t+\sqrt{Q}\,\d W_{t},\\
Z^x_{0}  & =x.
\endaligned \right.
\end{equation}
Define the operators
  \begin{equation}\label{covar-oper}
  Q_t= \int_0^t e^{sA} Q e^{sA^\ast} \,\d s,\quad t>0,
  \end{equation}
where $A^\ast$ is the adjoint operator of $A$. If ${\rm Tr}(Q_t)<\infty$ for all $t>0$, then the equation \eqref{linear-eq} has the mild solution
  $$Z^x_t= e^{tA}x+ W_A(t),$$
where $W_A(t)$ is the stochastic convolution:
  $$W_A(t)= \int_0^t e^{(t-s)A} \sqrt{Q}\,\d W_s. $$
The process $\{Z^x_t \}_{t\geq 0}$ is usually called the Ornstein-Uhlenbeck process in the literature; for any $t>0$, $Z^x_t$ has the Gaussian law $N_{e^{tA}x, Q_t}$ with mean $e^{tA}x\in H$ and covariance operator $Q_t$.

Let $\{S_t\}_{t\in [0,T]}$ be the Ornstein-Uhlenbeck semigroup associated to \eqref{linear-eq}. Then we can rewrite the Kolmogorov equation \eqref{KolE} in the mild form:
  \begin{equation}\label{mild-sol}
  u(t,x)= S_t\phi(x) + \int_0^t S_{t-s}\big(\<B, Du(s)\> \big)(x)\,\d s.
  \end{equation}
One can solve this equation in suitable spaces by using the contraction mapping principle, see e.g. \cite[Section 9.4.2]{DPZ}. In the recent paper \cite{FLR}, we have exploited this idea and studied the iterative approximation for the solution: $u_0(t,x)= S_t\phi(x)$ and
  $$u_n(t,x)= S_t\phi(x) + \int_0^t S_{t-s}\big(\<B, Du_{n-1}(s)\> \big)(x)\,\d s, \quad n\geq 1.$$
Define the functions: $v_0(t,x)= u_0(t,x)= S_t\phi(x)$ and
  $$v_n(t,x)= u_n(t,x)- u_{n-1}(t,x),\quad n\geq 1.$$
Then, we obtain the iteration scheme below:
  \begin{equation}\label{new-iteration}
  \left\{ \aligned
  v_{n+1}(t,x)&= \int_0^t \big(S_{t-s} k^n_s\big) (x)\,\d s, \\
  k^n_s(y)&= \<B(y), Dv_n(s,y)\>,\\
  v_0(t,x)&= S_t\phi(x).
  \endaligned \right.
  \end{equation}
The main result of \cite{FLR} (see Theorem 1.1 therein) can be stated as follows.

\begin{proposition}\label{1-prop}
Assume that $\phi:H\to \R$ and $B:H\to H$ are bounded; then under suitable conditions on the operators $A$ and $Q$, the following series
  \begin{equation}\label{1-prop.1}
  u(t,x)= \sum_{n=0}^\infty v_n(t,x)
  \end{equation}
converge uniformly in $(t,x)\in [0,T]\times H$, where, for $n\geq 1$,
  $$ \aligned
  v_n ( t,x )  =& \int_{0}^{t} \d r_n \int_0^{r_n} \d r_{n-1} \cdots \int_{0}^{r_2} \d r_1 \\
  &\, \mathbb{E}\Bigg[ \phi\big( Z^x_{t} \big)
  \prod_{i=1}^{n} \Big\langle \Lambda ( r_{i+1}-r_{i} )  B\big( Z^x_{r_i} \big) ,Q_{r_{i+1}-r_{i}}^{-1/2} \big(Z^x_{r_{i+1}}-e^{(  r_{i+1}-r_{i})  A}Z^x_{r_i} \big) \Big\rangle \Bigg] ,
  \endaligned $$
where $r_{n+1}=t$ and $\Lambda(t)= Q_t^{-1/2} e^{tA},\, t>0$.
\end{proposition}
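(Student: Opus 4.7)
The identity hinges on the Bismut--Elworthy--Li formula for the Ornstein--Uhlenbeck semigroup $S_t$: since $Z_t^x\sim N_{e^{tA}x,Q_t}$, Cameron--Martin integration by parts gives, for bounded measurable $f:H\to\R$ and $h\in H$,
\begin{equation*}
\<D(S_t f)(x),h\>=\E\big[f(Z_t^x)\<\Lambda(t)h,Q_t^{-1/2}(Z_t^x-e^{tA}x)\>\big],\qquad t>0,
\end{equation*}
provided $\Lambda(t)$ is bounded. I would combine this with the Markov property of $Z^x$ on $[0,T]$ to turn every application of $S_{t-s}$ inside \eqref{new-iteration} into an expectation against $\phi(Z_t^x)$ times Bismut weights indexed by a new time variable, and verify inductively that the time integrals line up on the ordered simplex with $r_{n+1}=t$.

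\textbf{Induction.} For $n=1$: apply Bismut with $f=\phi$ and $t=s$ to get $Dv_0(s,y)$, pair with $B(y)$ to form $k_s^0(y)$, and expand $v_1(t,x)=\int_0^t\E[k_s^0(Z_{t-s}^x)]\,\d s$; the Markov identification of $Z_s^{Z_{t-s}^x}$ with $Z_t^x$ (conditional on $\mathcal F_{t-s}$), followed by the change $r_1=t-s$, $r_2=t$, yields the factor $G_1$ of the statement. For the step $n\to n+1$, differentiate $v_n$ via the recursion $v_n(s,y)=\int_0^s S_{s-\sigma}(k_\sigma^{n-1})(y)\,\d\sigma$ applying Bismut only to the outer $S_{s-\sigma}$ (the sole place where $y$ enters at this layer); pair with $B(y)$; push through the outer $S_{t-s}$ of $v_{n+1}(t,x)=\int_0^t S_{t-s}(k_s^n)(x)\,\d s$; invoke Markov to rewrite all restarted OU processes in terms of the single process $Z_\cdot^x$; substitute the inductive formula for $k_\sigma^{n-1}$; and rename variables (roughly $r_n=t-s$, $r_{n+1}=t-\sigma$, with $r_1,\dots,r_{n-1}$ inherited from the hypothesis). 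The result is $\phi(Z_t^x)\prod_{i=1}^{n+1}G_i$ integrated on the simplex $0<r_1<\cdots<r_{n+1}<t$.

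\textbf{Convergence and main obstacle.} The OU increments $Z_{r_{i+1}}^x-e^{(r_{i+1}-r_i)A}Z_{r_i}^x=\int_{r_i}^{r_{i+1}}e^{(r_{i+1}-u)A}\sqrt{Q}\,\d W_u$ are independent across $i$ and, conditional on $\mathcal F_{r_i}$, have law $N(0,Q_{r_{i+1}-r_i})$; hence $G_i$ is conditionally centred Gaussian of variance at most $\|\Lambda(r_{i+1}-r_i)\|^2\|B\|_\infty^2$, and iterated conditioning yields $\E\big[\prod_i|G_i|\big]\le c^n\|B\|_\infty^n\prod_i\|\Lambda(r_{i+1}-r_i)\|$. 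Under the ``suitable conditions'' on $A,Q$ (a polynomial bound $\|\Lambda(s)\|\le c\,s^{-\alpha}$ with some $\alpha\in(0,1)$, typical in parabolic SPDE settings), the Dirichlet integral formula gives a simplex integral of size $O\big(t^{n(1-\alpha)}\Gamma(1-\alpha)^n/\Gamma(n(1-\alpha)+1)\big)$, with super-exponential decay in $n$; together with $\|\phi\|_\infty<\infty$, this produces uniform summability of $\sum v_n$ on $[0,T]\times H$. The main technical difficulty is the inductive bookkeeping---checking that the Bismut-plus-Markov step produces \emph{exactly} the next factor $G_n$ and the simplex ordering---together with justifying differentiation under $\E$ and the Fubini exchanges despite the singularity of $\Lambda(t)$ as $t\downarrow 0$.
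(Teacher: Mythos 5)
Your proposal follows essentially the same route as the paper's (i.e.\ the argument of \cite{FLR}, reproduced in the $L^p$ setting in Section \ref{sec-general} here): the derivative formula \eqref{derivative-formula.1} for the Ornstein--Uhlenbeck semigroup combined with the Markov property to build the formula for $v_n$ inductively through $k^n_t$, followed by iterated conditioning on the conditionally Gaussian factors and the Beta-function evaluation of the simplex integral (Lemma \ref{lem-multiple-integral}) to get the $\Gamma(1+n(1-\delta))^{-1}$ decay ensuring uniform convergence. This matches the paper's proof in both structure and the key estimates, so no further comparison is needed.
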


The above series expansion gives us an explicit formula of $u(t,x)$ in terms of Gaussian integrals, which leads to an algorithm for numerical solution of \eqref{KolE}, as demonstrated by various examples in \cite{FLR}.

We rewrite the formula \eqref{1-prop.1} as
  \begin{equation}\label{eq-1}
  u(t,x)= \E\big[ \phi(Z^x_t) \rho(t,x)\big],
  \end{equation}
in which
  $$\aligned
  \rho(t,x)=  1+ \sum_{n=1}^\infty & \int_{0}^{t} \d r_n \int_0^{r_n} \d r_{n-1} \cdots \int_{0}^{r_2} \d r_1 \\
  &\, \prod_{i=1}^{n} \left\langle \Lambda ( r_{i+1}-r_{i} )  B\big( Z^x_{r_i} \big) ,Q_{r_{i+1}-r_{i}}^{-1/2} \big(Z^x_{r_{i+1}}-e^{(  r_{i+1}-r_{i})  A}Z^x_{r_i} \big) \right\rangle.
  \endaligned $$
The formula \eqref{eq-1} looks very close to the one obtained from the Girsanov transform; indeed, the Girsanov transform also yields a series expansion of the form \eqref{1-prop.1}, see e.g. Proposition \ref{prop-Girsanov} below. The purpose of this paper is to rigorously establish this relation. In the following we write $\mathcal L(H)$ for the Banach space of bounded linear operators on $H$ with the norm $\|\cdot\|_{\mathcal L(H)}$.

\begin{hypothesis}\label{hypothe}
\begin{itemize}
\item[\rm (i)] $A:D(A)\subset H\to H$ is the infinitesimal generator of a strongly continuous semigroup $\{ e^{tA}\}_{t\geq 0}$.
\item[\rm (ii)] $Q\in \mathcal L(H)$ is a nonnegative self-adjoint operator satisfying ${\rm Ker}(Q)= \{0\}$, and for any $t>0$, the linear operator defined in \eqref{covar-oper} is of trace class.
\item[\rm (iii)] For any $t>0$ we have $e^{tA}(H) \subset Q_t^{1/2}(H)$; then by the closed graph theorem, $\Lambda(t) = Q_t^{-1/2} e^{tA}$ is well defined as a bounded linear operator.
\item[\rm (iv)] We assume that
  $$\int_0^t \|\Lambda(s) \|_{\mathcal L(H)}\, \d s<\infty , \quad t>0 .$$
\item[\rm (v)] The initial condition $\phi:H\to \R$ and the nonlinear mapping $B:H\to H$ are bounded and uniformly continuous; moreover, $B(H) \subset Q^{1/2}(H)$ and the mapping $Q^{-1/2} B: H\to H$ has at most linear growth.
\end{itemize}
\end{hypothesis}

The assumptions (i)--(iv) are classical in the literature. The first part of (v) will be useful in taking some limits in the proofs below, while the second part is needed in the Girsanov transform (see Remark \ref{1-rem} below). The main result of the paper is

\begin{theorem}\label{main-thm}
Under Hypothesis \ref{hypothe} above, the series expansion obtained from the Girsanov transform coincides with the one in \eqref{1-prop.1}.
\end{theorem}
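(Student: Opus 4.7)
First apply the Girsanov transform to cast $u(t,x)$ as an expectation under the OU dynamics. Under Hypothesis~\ref{hypothe}(v) the mapping $Q^{-1/2}B$ is well defined with at most linear growth, and combined with the Gaussian integrability of the OU process $Z^x$ this yields Novikov's condition, so that
\[
M_t := \exp\Bigl( \int_0^t\<Q^{-1/2}B(Z_s^x),\d W_s\> - \tfrac12\int_0^t|Q^{-1/2}B(Z_s^x)|^2\,\d s \Bigr)
\]
is a true martingale and $u(t,x)=\E[\phi(Z_t^x)M_t]$. It\^o's formula gives the linear Volterra equation $M_t = 1 + \int_0^t M_s\<Q^{-1/2}B(Z_s^x),\d W_s\>$, whose Picard iteration produces the Dyson series $M_t = \sum_{n\geq0}J_n(t)$ with $J_n(t)$ the $n$-fold iterated It\^o integral of $\<Q^{-1/2}B(Z_{r_i}^x),\d W_{r_i}\>$ over the simplex $\{0<r_1<\cdots<r_n<t\}$. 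Consequently $u(t,x)=\sum_{n\geq0}\E[\phi(Z_t^x)J_n(t)]$; this is the Girsanov series expansion.

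The core task is then to identify $\E[\phi(Z_t^x)J_n(t)]=v_n(t,x)$ for every $n$, which I would carry out by induction on $n$. The base $n=0$ is immediate: $J_0=1$ and $v_0(t,x)=S_t\phi(x)=\E[\phi(Z_t^x)]$. For the inductive step, the outermost $\d W_{r_n}$ in $J_n$ is paired with $\phi(Z_t^x)$ via the martingale representation for the OU semigroup,
\[
\phi(Z_t^x) = S_t\phi(x) + \int_0^t \<\sqrt{Q}\, DS_{t-r}\phi(Z_r^x),\d W_r\>,
\]
followed by It\^o's isometry, reducing $\E[\phi(Z_t^x)J_n(t)]$ to the Lebesgue integral $\int_0^t\E[J_{n-1}(r_n)\<B(Z_{r_n}^x), DS_{t-r_n}\phi(Z_{r_n}^x)\>]\,\d r_n$ (using $B(H)\subset Q^{1/2}(H)$ to cancel $\sqrt Q\,Q^{-1/2}$). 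I then invoke the infinite-dimensional Bismut--Elworthy--Li formula
\[
\<DS_\tau\psi(y),h\> = \E\bigl[\psi(Z_\tau^y)\<\Lambda(\tau)h,Q_\tau^{-1/2}(Z_\tau^y-e^{\tau A}y)\>\bigr],
\]
valid under Hypothesis~\ref{hypothe}(iii)--(iv), to convert $DS_{t-r_n}\phi$ into a Gaussian weight of exactly the form appearing in Proposition~\ref{1-prop}. The Markov property at $r_n$ combined with the induction hypothesis applied to the remaining $J_{n-1}$ then reproduces the $n$-fold integrand of $v_n(t,x)$.

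The main technical obstacle is that $\phi$ is only bounded and uniformly continuous rather than smooth, so the Bismut formula and martingale representation are not directly available for the spatial derivatives of $S_\tau\phi$. I would overcome this by a standard approximation: replace $\phi$ with regularizations $\phi_\eps$ (e.g., taking $\phi_\eps = S_\eps\phi$, which is Fr\'echet differentiable by Hypothesis~\ref{hypothe}(iii)--(iv)), carry out the identification for $\phi_\eps$, and pass to the limit using dominated convergence together with the uniform bound on $\phi$. A secondary point is the interchange of sum and expectation needed in going from $M_t=\sum J_n$ to the series for $u$; this follows from the convergence estimates already established in \cite{FLR}, which ultimately rely on the integrability $\int_0^t\|\Lambda(s)\|_{\mathcal L(H)}\,\d s<\infty$ of Hypothesis~\ref{hypothe}(iv).
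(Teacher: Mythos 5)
Your overall route is the same as the paper's: Girsanov plus the Picard/Dyson expansion $M_t=\sum_n M^{(n)}_t$ of the exponential martingale, followed by an induction identifying $\E[\phi(Z^x_t)M^{(n)}_t]$ with $v_n(t,x)$ via the Bismut-type derivative formula for the Ornstein--Uhlenbeck semigroup and the Markov property. The structure of your inductive step (pair the outermost stochastic integral with the martingale part of $\phi(Z^x_t)$, apply the cross-variation/isometry, convert $DS_{t-r}\phi$ into the Gaussian weight, then invoke the induction hypothesis for the new test function at time $r_n$) is exactly the paper's.

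There is, however, one genuine gap, and you have misdiagnosed where the difficulty lies. The representation
\begin{equation*}
\phi(Z^x_t)=S_t\phi(x)+\int_0^t\big\langle \sqrt{Q}\,DS_{t-r}\phi(Z^x_r),\d W_r\big\rangle
\end{equation*}
cannot be written directly, and the subsequent It\^o isometry is not available: the only bound on the integrand is $|DS_{t-r}\phi|\le\|\Lambda(t-r)\|_{\mathcal L(H)}\|\phi\|_\infty$, and Hypothesis~\ref{hypothe}(iv) controls only $\int_0^t\|\Lambda(s)\|_{\mathcal L(H)}\,\d s$, not the square; in the typical examples $\|\Lambda(s)\|_{\mathcal L(H)}\gtrsim s^{-1/2}$, so $\int_0^t\|\Lambda(t-r)\|^2_{\mathcal L(H)}\,\d r=\infty$ and the stochastic integral up to time $t$ need not make sense. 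The paper handles this by applying It\^o's formula only on $[0,t_0]$ with $t_0<t$ and then passing $t_0\uparrow t$, which is where the uniform continuity of $\phi$ genuinely enters: one needs $\E|S_{t-t_0}\tilde\phi(Z_{t_0})-\tilde\phi(Z_t)|\to 0$ (Lemma~\ref{2-lem-1}), proved via tightness of the family $\{N_{Q_s}\}_{s\le T}$ and the local uniform convergence $S_s\psi\to\psi$. Your stated obstacle --- that $\phi$ is not smooth enough for the Bismut formula --- is not the real one: formula \eqref{lem-1.2} holds for bounded merely measurable $\phi$ (\cite[Proposition 2.28]{DaPrato}). Your proposed regularization $\phi_\eps=S_\eps\phi$ would in fact also repair the true defect (it shifts the singular weight to $\Lambda(t-r+\eps)$, which is bounded on $[0,t]$), and is essentially the $t_0\uparrow t$ argument in disguise; but as written you neither identify the integrability failure nor supply the limiting argument (the analogue of Lemma~\ref{2-lem-1}) needed to remove $\eps$, so this step must be filled in. A secondary imprecision: the convergence $u=\sum_n\E[\phi(Z^x_t)J_n(t)]$ on the Girsanov side is obtained in the paper from the bound $\E[\sup_t(M^{(n)}_t)^2]\le C^n\|Q^{-1/2}B\|_\infty^{2n}t^n/n!$ (Proposition~\ref{prop-Girsanov}), not from the estimates of \cite{FLR} on the $v_n$ side, which only become relevant after the identification $I_n=v_n$ is established.
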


This result provides us with a link between the Girsanov transform and Kolmogorov equations for SPDEs; it will be proved in Section \ref{subsec-proof}, following some ideas in  \cite[Theorem 3.1]{KY} where the finite dimensional case was treated. Note that the Girsanov transform does not require the nonlinear term $B$ to be bounded, while the usual results on Kolmogorov equations assume boundedness of $B$, see for instance \cite{DPF, DPFPR} and \cite[Section 2]{DPFPR-15}. The nonlinear part considered in \cite{DPFRV} is the sum of a bounded mapping $B$ and a special unbounded term of gradient type $\nabla V$, satisfying some complicated conditions; see also \cite{RS} for some related results.

Our purpose in the rest of the paper is, applying the iterative scheme \eqref{new-iteration}, to extend the theory on Kolmogorov equations beyond the boundedness assumption on $B$. To state our next result, we need the following conditions which are replacements of (ii), (iv) and (v) in Hypothesis \ref{hypothe}.
\begin{hypothesis}\label{hypothe-1}
\begin{itemize}
\item[\rm (ii$'$)] The operator $Q_\infty= \int_0^\infty e^{sA} Q e^{sA^\ast} \,\d s$ is well defined and of trace class; we denote by
  \begin{equation}\label{invariant-measure}
  \mu= N_{ Q_\infty}
  \end{equation}
the centered Gaussian measure on $H$ with covariance operator $Q_\infty$.
\item[\rm (iv$'$)] We assume there exists $\delta\in (0,1)$ and $C_\delta>0$ such that
  $$\|\Lambda(t) \|_{\mathcal L(H)} \leq C_\delta /t^\delta, \quad t>0 .$$
\item[\rm (v$'$)] The initial datum $\phi \in L^{p_0} (H,\mu)$ for some $p_0>1$, and the nonlinear part $B:H\to H$ in \eqref{KolE} has sublinear growth: there exist $C>0$ and $\beta\in (0,2(1-\delta))$ such that
  $$|B(x)| \leq C\big(1+|x|^\beta \big) \quad \mbox{for all } x\in H.$$
\end{itemize}
\end{hypothesis}

Recall that the Gaussian measure $\mu$ is the unique invariant measure of the Ornstein--Uhlenbeck semigroup $\{ S_t\}_{t>0}$, cf. \cite[Theorem 2.34]{DaPrato}. Next, the parameter $\delta$ belongs to $[1/2, 1)$ in many examples (cf. \cite[Lemma 2.3]{Flandoli} or \cite[Example 2.2]{FLR}), thus the speed of growth of $B$ is strictly lower than $|x|$. Our next main result is

\begin{theorem}\label{main-thm-0}
Assume the conditions (i) and (iii) of Hypothesis \ref{hypothe} and Hypothesis \ref{hypothe-1}. Then, for any $\bar p\in (1,p_0)$ and $ T>0$, the Kolmogorov equation \eqref{KolE} has a unique mild solution $u\in C\big([0,T], L^{\bar p}(H, \mu) \big)$ such that $ t^\delta Du(t) \in C\big([0,T], L^{\bar p}(H, \mu;H) \big)$.
\end{theorem}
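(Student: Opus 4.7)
The plan is to construct the solution via the iterative scheme \eqref{new-iteration}, now interpreted in $L^p(\mu)$-spaces. The natural functional setting is
$\mathcal E_T=\bigl\{u\in C([0,T],L^{\bar p}(H,\mu)):\,t^\delta Du\in C([0,T],L^{\bar p}(H,\mu;H))\bigr\}$
equipped with $\|u\|_{\mathcal E_T}=\sup_{t\in[0,T]}\|u(t)\|_{L^{\bar p}(\mu)}+\sup_{t\in[0,T]}t^\delta\|Du(t)\|_{L^{\bar p}(\mu;H)}$. Two semigroup estimates are the workhorses: since $\mu$ is invariant for $\{S_t\}$, Jensen's inequality yields the contractivity $\|S_tf\|_{L^p(\mu)}\le\|f\|_{L^p(\mu)}$ for every $p\ge1$; and the Bismut--Elworthy representation
$DS_tf(x)\cdot h=\E\bigl[f(Z^x_t)\bigl\langle\Lambda(t)h,Q_t^{-1/2}W_A(t)\bigr\rangle\bigr]$,
combined with Hölder and the Gaussianity of $Q_t^{-1/2}W_A(t)$, yields the pointwise bound $|DS_tf(x)|\le C_p\|\Lambda(t)\|_{\mathcal L(H)}(S_t|f|^p)^{1/p}(x)$ and, invoking (iv$'$), the estimate $\|DS_tf\|_{L^p(\mu;H)}\le C_pC_\delta\,t^{-\delta}\|f\|_{L^p(\mu)}$ for every $p>1$.

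The critical difficulty is controlling the nonlinear term $\<B,Du(s)\>$. By Hypothesis~\ref{hypothe-1}(v$'$) and the Gaussianity of $\mu$, one has $B\in L^r(\mu;H)$ for every $r\ge1$, with explicit growth $\|B\|_{L^r(\mu;H)}=O(r^{\beta/2})$ as $r\to\infty$; Hölder with $1/q=1/r+1/p$ reduces the estimate of $\|\<B,Du(s)\>\|_{L^q(\mu)}$ to $\|Du(s)\|_{L^p(\mu;H)}$ for some $p>q$. Because $\mathcal E_T$ supplies only $L^{\bar p}$-integrability of $Du$, the strict gap $\bar p<p_0$ is exploited by running \eqref{new-iteration} along a decreasing sequence of exponents $p_0>p_1>\cdots\downarrow\bar p$ satisfying $\sum_k(1/p_k-1/p_{k-1})\le 1/\bar p-1/p_0$: at the $n$-th step the time integrals $\int_0^t(t-s)^{-\delta}s^{a_{n-1}}\,\d s$ yield Beta factors whose iterated product decays like $(n!)^{-(1-\delta)}$, while choosing $r_k\sim k^\gamma$ gives $\prod_{k=1}^n\|B\|_{L^{r_k}}\lesssim C^n(n!)^{\gamma\beta/2}$. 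The hypothesis $\beta<2(1-\delta)$ is decisive precisely because it permits $\gamma\in(1,2(1-\delta)/\beta)$, balancing these two competing factorials; the Dyson-type series $\sum_n v_n$ then converges in $\mathcal E_T$ on any $[0,T]$.

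This gives existence, and uniqueness follows from applying the same estimates to the difference of two candidate solutions, either via an exponentially weighted version of the norm or by standard iteration on short subintervals. Continuity at $t=0$ of $t\mapsto u(t)$ in $L^{\bar p}(\mu)$ and of $t\mapsto t^\delta Du(t)$ in $L^{\bar p}(\mu;H)$ follows from dominated convergence applied to the mild formula and to the Bismut representation. The principal obstacle throughout is the simultaneous book-keeping of the loss of integrability $p_{n-1}\to p_n$ at each Hölder step, the polynomial blow-up of $\|B\|_{L^r(\mu;H)}$ in $r$, and the combinatorial gain from the iterated Beta integrals; the quantitative condition $\beta<2(1-\delta)$ is indispensable for this balance to close.
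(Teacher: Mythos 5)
Your proposal follows essentially the same route as the paper: the iteration \eqref{new-iteration} run along a decreasing sequence of exponents $p_n\downarrow$ with $p_n>\bar p$ determined by $q_n\sim n^\kappa$, $1<\kappa<2(1-\delta)/\beta$, the smoothing estimate $\|DS_tf\|_{L^p(\mu)}\le C\|\Lambda(t)\|_{\mathcal L(H)}\|f\|_{L^p(\mu)}$ from the Bismut-type formula, the Gaussian moment bound $\|B\|_{L^r(\mu;H)}=O(r^{\beta/2})$, and the balance of $[(n+n_0)!]^{\beta\kappa/2}$ against the iterated Beta-integral decay $\Gamma(1+n(1-\delta))^{-1}$, with uniqueness by iterating the same H\"older/smoothing estimates on the difference of two solutions. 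This matches the paper's argument in structure and in all quantitative choices, so the proposal is correct.
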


This result will be proved in Section \ref{sec-Kolmogorov}. It shows that the solution $u$ belongs to the space $ C\big([0,T], L^{p_0-}(H, \mu) \big)$, where $L^{p_0-}(H, \mu)= \cap_{p<p_0} L^{p}(H, \mu)$. By making use of the expression of $v_n(t,x)$ in \eqref{new-iteration} and the H\"older inequality in a clever way, we shall prove some estimates on $v_n$ in spaces $L^{p_n}(H,\mu)$, with a carefully chosen decreasing sequence $\{p_n \}_{n\geq 1}$ such that $p_n\in(\bar p, p_0)$ for all $n\geq 1$. In this way, we can show that the series is convergent in  $L^{\bar p}(H,\mu)$; see the beginning of Section \ref{sec-Kolmogorov} for a detailed explanation of the idea of proof. Finally, we collect in the appendix some moment estimates of Gaussian measures on Hilbert space, which play an important role in Section \ref{sec-Kolmogorov}.

\section{Girsanov transform and proof of Theorem \ref{main-thm}}

This section consists of two parts. In Section \ref{subsec-Girsanov}, we give a brief introduction of the classical Girsanov transform for Eq. \eqref{spde} under suitable conditions; accordingly,  we get a weak solution of \eqref{spde} and the corresponding semigroup, as well as a series expansion for the latter. We then prove the first main result (Theorem \ref{main-thm}) in Section \ref{subsec-proof}.

\subsection{Girsanov transform and the corresponding series expansion}\label{subsec-Girsanov}

By Hypothesis \ref{hypothe}-(v), the following quantity makes sense:
  $$\psi(t,x)= Q^{-1/2} B(Z^x_t), \quad t>0,\, x\in H.$$
Assume that
  $$\P\bigg( \int_0^T |\psi(t,x)|^2\,\d t<+\infty \bigg)=1;$$
then both
  \begin{equation}\label{martingale.1}
  L_t= L^x_t= \int_0^t \<\psi(s,x),\d W_s\>, \quad t\in [0,T]
  \end{equation}
and
  \begin{equation}\label{martingale.2}
  M_t=M^x_t= \exp\bigg(\int_0^t \<\psi(s,x),\d W_s\> - \frac12\int_0^t |\psi(s,x)|^2\,\d s \bigg), \quad t\in [0,T]
  \end{equation}
are local martingales on the probability space $(\Omega, \mathcal F, \mathcal F_t, \P)$. The well known Girsanov theorem can be stated as below (cf. \cite[Section 10.2]{DPZ} or \cite[Section 10.3]{DaPrato-1}).

\begin{theorem}\label{thm-Girsanov}
Assume that $\E M_t=1$ for all $t\in [0,T]$. Then, under the probability measure
  $$\d\Q= M_T\,\d \P,$$
the process
  $$\tilde W_t= W_t- \int_0^t \psi(s,x)\,\d s,\quad t\in [0,T]$$
is a cylindrical Wiener process on $H$. As a consequence, the process $\{Z^x_t\}_{t\in [0,T]}$ is a (weak) mild solution to \eqref{spde} on the probability space $(\Omega, \mathcal F, \Q)$ with the cylindrical Brownian motion.
\end{theorem}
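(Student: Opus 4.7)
The plan is to establish the two claims of the theorem in sequence: first, that under the measure $\Q$ the shifted process $\tilde W$ is a cylindrical Wiener process on $H$, and second, that $\{Z^x_t\}$ then automatically satisfies the mild form of \eqref{spde} driven by $\tilde W$. I regard part one as a direct invocation of the infinite-dimensional Girsanov theorem, and part two as an algebraic verification that crucially uses the inclusion $B(H)\subset Q^{1/2}(H)$ from Hypothesis~\ref{hypothe}-(v).

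For the first part, since $M$ is a continuous nonnegative local martingale with $\E M_t=1$ for every $t\in[0,T]$, it is a true martingale, so $\Q=M_T\,\d\P$ is a probability measure equivalent to $\P$. To verify the cylindrical Wiener property, I would fix an orthonormal basis $\{e_n\}_{n\geq 1}$ of $H$ and define the scalar processes $W^n_t=\<W_t,e_n\>$, $\psi^n_s=\<\psi(s,x),e_n\>$, and $\tilde W^n_t=W^n_t-\int_0^t\psi^n_s\,\d s$. Under $\P$, the $W^n$ are independent real Brownian motions, and $\int_0^T|\psi|^2\,\d s<\infty$ a.s.\ is equivalent to the $\ell^2$-integrability $\sum_n\int_0^T(\psi^n_s)^2\,\d s<\infty$ a.s. Applying the classical finite-dimensional Girsanov theorem to each truncated family $(W^1,\ldots,W^d)$ against the corresponding truncated exponential and passing to the limit, each $\tilde W^n$ is a $\Q$-Brownian motion, and Lévy's characterization preserves the cross-variations $\<\tilde W^n,\tilde W^m\>_t=t\delta_{nm}$ and hence the independence. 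This is exactly the defining property of a cylindrical Wiener process on $H$, as in \cite[Section 10.2]{DPZ} and \cite[Section 10.3]{DaPrato-1}.

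For the second part, I would substitute $\d W_s=\d\tilde W_s+\psi(s,x)\,\d s$ into the stochastic convolution $W_A(t)$. Hypothesis~\ref{hypothe}-(v) guarantees $B(H)\subset Q^{1/2}(H)$, so $\psi(s,x)=Q^{-1/2}B(Z^x_s)$ is well defined and
\begin{equation*}
\sqrt Q\,\psi(s,x)=\sqrt Q\,Q^{-1/2}B(Z^x_s)=B(Z^x_s).
\end{equation*}
Applying $e^{(t-s)A}$ and integrating then gives
\begin{equation*}
W_A(t)=\int_0^t e^{(t-s)A}\sqrt Q\,\d\tilde W_s+\int_0^t e^{(t-s)A}B(Z^x_s)\,\d s,
\end{equation*}
and inserting this into $Z^x_t=e^{tA}x+W_A(t)$ yields the mild form of \eqref{spde} with driving noise $\tilde W$.

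The main obstacle I expect is the cylindrical setting of the Girsanov step: one must justify that a single scalar exponential built from the $H$-valued integrand $\psi$ correctly transforms the entire cylindrical Brownian motion simultaneously, and that the independence of infinitely many coordinate Brownian motions is preserved in the limit. The coordinate decomposition together with the assumed $L^2$-integrability of $\psi$ makes this rigorous, and the algebraic identity $\sqrt Q\,Q^{-1/2}B=B$ on $B(H)$, enforced by Hypothesis~\ref{hypothe}-(v), is the precise reason why the Girsanov shift reproduces the drift $B$ in \eqref{spde} rather than some modification of it.
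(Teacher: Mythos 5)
The paper does not prove this statement at all: it is quoted as the classical infinite-dimensional Girsanov theorem, with references to \cite[Section 10.2]{DPZ} and \cite[Section 10.3]{DaPrato-1}, so there is no in-paper argument to compare against. Your sketch is essentially the standard textbook proof from those references, and its two halves are structured correctly: (a) $M$ is a nonnegative local martingale with constant expectation, hence a true martingale, so $\Q$ is a probability measure, and the coordinate processes $\tilde W^n$ are shown to be independent $\Q$-Brownian motions; (b) the algebraic substitution $\d W_s=\d\tilde W_s+\psi(s,x)\,\d s$ together with $\sqrt Q\,Q^{-1/2}B=B$ (valid on $B(H)\subset Q^{1/2}(H)$ by Hypothesis~\ref{hypothe}-(v)) turns the Ornstein--Uhlenbeck mild formula into the mild form of \eqref{spde}. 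Part (b) is complete as written.

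The one step I would tighten is the device of ``applying finite-dimensional Girsanov to each truncated family against the corresponding truncated exponential and passing to the limit.'' The truncated exponential built from $(\psi^1,\dots,\psi^d)$ defines a measure $\Q_d$ that is \emph{not} $\Q$, and the statement that $\tilde W^n$ is a Brownian motion under $\Q_d$ does not transfer to $\Q$ by a naive limit; you would need uniform integrability of the truncated densities and convergence of conditional laws, which is more work than the problem deserves. The clean route --- which you already have the ingredients for --- is to apply the Girsanov--Meyer drift-removal formula directly under the single measure $\Q=M_T\,\d\P$: for each $h\in H$, $\langle W,h\rangle_t-\langle\langle W,h\rangle,L\rangle_t=\langle \tilde W_t,h\rangle$ is a continuous $\Q$-local martingale, its quadratic variation $t|h|^2$ and the cross-variations $\delta_{nm}t$ are unchanged under the equivalent change of measure, and the multidimensional L\'evy characterization then yields that the $\tilde W^n$ are independent $\Q$-Brownian motions with no truncation or limiting argument needed. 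With that replacement your proof is the standard one and is correct.
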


\begin{remark}\label{1-rem}
By \cite[Proposition 10.17]{DPZ}, a sufficient condition for $\E M_T=1$ is that there exists a $\delta>0$ such that
  \begin{equation}\label{1-rem.1}
  \sup_{t\in [0,T]}\E_\P\, e^{\delta|\psi(t,x)|^2} <\infty.
  \end{equation}
Note that $\psi(t,x)= Q^{-1/2} B(Z^x_t)$ and $Z^x_t$ has the Gaussian law $N_{e^{tA}x, Q_t}$; by Fernique's theorem, the condition \eqref{1-rem.1} follows from the linear growth property of $Q^{-1/2}B$ in (v) of Hypothesis \ref{hypothe} (see  \cite[Theorem 10.20]{DPZ}).
\end{remark}

Since we have a (weak) mild solution to \eqref{spde}, we can represent the associated semigroup: for any $\phi\in \mathcal B(H)$,
  $$P_t\phi(x)= \E_\Q \phi(Z^x_t)= \E_\P\big[\phi(Z^x_t) M_T\big],\quad (t,x)\in [0,T]\times H.$$
By the martingale property,
  \begin{equation}\label{semigroup}
  P_t\phi(x)= \E_\P \Big\{\E_\P\big[\phi(Z^x_t) M_T| \mathcal F_t\big] \Big\}= \E_\P\big[\phi(Z^x_t)\, \E_\P(M_T| \mathcal F_t) \big]= \E_\P\big[\phi(Z^x_t) M_t \big].
  \end{equation}
In the sequel we simply write $\E$ instead of $\E_\P$. Remark that \eqref{1-rem.1} implies the process $\{L_t\}_{t\in [0,T]}$ is a martingale having finite moments of all orders; indeed, for any $p\geq 1$, by the Burkholder-Davis-Gundy inequality,
  \begin{equation}\label{marting-integrability}
  \aligned
  \E\bigg[\sup_{t\leq T} L_t^{2p}\bigg]&\leq C_p \E\bigg[\Big(\int_0^T |\psi(t,x)|^2\,\d t \Big)^p\bigg] \leq C_p T^{p-1} \int_0^T \E\big( |\psi(t,x)|^{2p} \big) \,\d t \\
  &\leq C_p T^p \sup_{t\leq T} \E\big( |\psi(t,x)|^{2p} \big) <+\infty,
  \endaligned
  \end{equation}
where the last step follows from \eqref{1-rem.1}. It is clear that the martingale $M_t$ satisfies the stochastic equation $\d M_t =M_t \<\psi(t,x),\d W_t\> = M_t\, \d L_t$, therefore
  \begin{equation}\label{martingale.3}
  M_t= 1+ \int_0^t M_s\,\d L_s.
  \end{equation}
We substitute this formula into the right hand side of \eqref{semigroup}:
  $$P_t\phi(x)= \E\big[\phi(Z^x_t) \big] + \E\bigg[\phi(Z^x_t) \int_0^t M_s\,\d L_s \bigg].$$
Repeating this procedure yields
  $$\aligned
  P_t\phi(x) &= \E\big[\phi(Z^x_t) \big] + \E\big[\phi(Z^x_t) L_t \big] + \E\bigg[\phi(Z^x_t) \int_0^t \bigg( \int_0^s M_r\,\d L_r \bigg)\,\d L_s \bigg].
  \endaligned $$
We can proceed as above to get a series expansion of $P_t\phi(x)$. To simplify the notation we introduce $M^{(0)}_t\equiv 1$ and
  $$M^{(n)}_t= \int_0^t M^{(n-1)}_s\,\d L_s,\quad t\in [0,T], \, n\geq 1. $$
Under the condition \eqref{1-rem.1}, similar to the computation in \eqref{marting-integrability}, we can prove inductively that the martingales  $M^{(n)}_t \ (n\geq 1)$ have finite moments of all orders. Moreover, similarly as in \cite[Proposition 3.1]{KY}, we have

\begin{proposition}\label{prop-Girsanov}
Let $T>0$ and $\phi\in \mathcal B(H)$. Assume that
  $$\big\| Q^{-1/2} B \big\|_\infty = \sup_{x\in H} \big| Q^{-1/2} B(x) \big|<+\infty,$$
then the series
  \begin{equation}\label{prop-Girsanov.1}
  P_t\phi(x)= \sum_{n=0}^\infty I_n(t,x) := \sum_{n=0}^\infty \E\Big[\phi(Z^x_t) M^{(n)}_t\Big], \quad t\in [0,T],\, x\in H
  \end{equation}
converge uniformly on $[0,T] \times H$.
\end{proposition}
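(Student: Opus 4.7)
The plan is to estimate the iterated stochastic integrals $M^{(n)}_t$ in $L^2(\P)$ by a recursive application of It\^o's isometry, exploiting the boundedness hypothesis $\big\|Q^{-1/2}B\big\|_\infty<\infty$ to control the integrand $\psi(s,x)=Q^{-1/2}B(Z^x_s)$ uniformly in every argument. Writing $K=\big\|Q^{-1/2}B\big\|_\infty$, we have $|\psi(s,x)|\leq K$ almost surely for all $s\in[0,T]$ and $x\in H$, so that the quadratic variation of $L$ satisfies $\d\<L\>_s=|\psi(s,x)|^2\,\d s\leq K^2\,\d s$. This uniform-in-$x$ bound is the one engine that will produce both convergence and uniformity on $[0,T]\times H$.

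First, I would prove by induction on $n\geq 0$ the estimate
$$\E\big[(M^{(n)}_t)^2\big]\leq \frac{(K^2 t)^n}{n!},\qquad t\in[0,T],\ x\in H.$$
The base case $n=0$ is trivial, and for the inductive step It\^o's isometry together with the bound on $\d\<L\>_s$ gives
$$\E\big[(M^{(n)}_t)^2\big]=\E\int_0^t\big(M^{(n-1)}_s\big)^2\,\d\<L\>_s\leq K^2\int_0^t \E\big[(M^{(n-1)}_s)^2\big]\,\d s.$$
Combined with Cauchy--Schwarz and the boundedness of $\phi$, this yields $|I_n(t,x)|\leq\|\phi\|_\infty(K^2T)^{n/2}/\sqrt{n!}$, which is summable and uniform in $(t,x)\in[0,T]\times H$. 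This disposes of the uniform-convergence part of the statement.

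Next I would identify the limit of the partial sums with $P_t\phi(x)$ by estimating the remainder $R_n(t):=M_t-\sum_{k=0}^{n}M^{(k)}_t$ in $L^2(\P)$. From \eqref{martingale.3} and the definition of $M^{(k)}_t$, a direct computation gives $R_n(t)=\int_0^t R_{n-1}(s)\,\d L_s$ with $R_0(t)=M_t-1$. Writing $M_t^2=\exp\big(2L_t-2\<L\>_t\big)\exp(\<L\>_t)$ and observing that the first factor is a true martingale under \eqref{1-rem.1} by Novikov's criterion while the second is bounded by $e^{K^2T}$, we obtain $\E[M_t^2]\leq e^{K^2T}$. The same induction as above then produces $\E[R_n(t)^2]\leq e^{K^2T}(K^2T)^n/n!\to 0$ uniformly in $t$, so that $\E\big[\phi(Z^x_t)R_n(t)\big]\to 0$ uniformly in $(t,x)$ and the series indeed sums to $\E[\phi(Z^x_t)M_t]=P_t\phi(x)$.

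The main obstacle I anticipate is not analytic but a matter of bookkeeping: one must verify at each stage of the induction that $M^{(n)}_t$ is a genuine square-integrable martingale, not merely a local one, so that It\^o's isometry is legitimate. This is however immediate, because the previously established $L^2$ bound on $M^{(n-1)}$ together with $|\psi|\leq K$ places the integrand $M^{(n-1)}_s\psi(s,x)$ in $L^2([0,T]\times\Omega)$, which bootstraps the whole scheme and simultaneously guarantees that the sum-versus-expectation interchange implicit in writing $P_t\phi(x)=\sum_n I_n(t,x)$ is valid.
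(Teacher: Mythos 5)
Your proof is correct and its core is the same as the paper's: an inductive $L^2(\P)$ bound on $M^{(n)}_t$ of order $(K^2t)^n/n!$, uniform in $x$ because $K=\|Q^{-1/2}B\|_\infty$ is, followed by Cauchy--Schwarz against $\|\phi\|_\infty$; the only cosmetic difference is that you use the It\^o isometry where the paper applies Doob's maximal inequality to $\sup_{t\le T}(M^{(n)}_t)^2$, which costs an extra factor $C^n$ but changes nothing. You do go one step further than the printed proof: the paper stops at summability of $\sum_n\E[\phi(Z^x_t)M^{(n)}_t]$ and leaves the identification of the sum with $P_t\phi(x)=\E[\phi(Z^x_t)M_t]$ implicit in the preceding iteration of \eqref{martingale.3}, whereas your remainder recursion $R_n(t)=\int_0^tR_{n-1}(s)\,\d L_s$ together with $\E[M_t^2]\le e^{K^2T}$ makes that interchange rigorous; this is a genuine (if small) improvement in completeness and all the steps check out.
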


\begin{proof}
Under the above assumption, it is easy to show that, by induction,
  $$\E\bigg[\sup_{t\in [0,T]} \Big(M^{(n)}_t \Big)^2 \bigg] \leq C^n \big\| Q^{-1/2} B \big\|_\infty^{2n}\, \frac{t^n}{n!}, $$
where the constant $C$ comes from Doob's maximal inequality. This immediately gives us the desired result.
\end{proof}

\subsection{Proof of Theorem \ref{main-thm}} \label{subsec-proof}

Under Hypothesis \ref{hypothe}, we will show that the terms $I_n(t,x)$ obtained in Proposition \ref{prop-Girsanov} coincide with $v_n(t,x)$ defined in the introduction. First, since $M^{(0)}_t=1$, one has $I_0(t,x)= \E [\phi(Z^x_t) ] = v_0(t,x)$. The following result shows the fact $I_1(t,x)=v_1(t,x)$, for which we present a detailed proof to illustrate the idea. We fix $x\in H$ and write $Z_t= Z^0_t =W_A(t)$ for the stochastic convolution. It holds that
  \begin{equation}\label{subsec-proof.1}
  Z_t- e^{(t-s)A} Z_s =Z^x_t- e^{(t-s)A} Z^x_s, \quad s\in [0,t].
  \end{equation}

\begin{proposition}\label{lem-1}
For any $t\in [0,T]$,
  $$ I_1(t,x)= \int_0^t \E\Big[\phi(Z^x_t)\big\< \Lambda(t-s) B(Z^x_s), Q_{t-s}^{-1/2} (Z_t- e^{(t-s)A} Z_s)\big\> \Big]\, \d s. $$
\end{proposition}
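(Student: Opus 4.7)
The plan is to compute $I_1(t,x) = \E[\phi(Z^x_t) L_t]$ by combining an It\^o representation of $\phi(Z^x_t)$ with a Cameron--Martin integration by parts for the OU semigroup $\{S_u\}$. First I would identify the closed martingale $s\mapsto \E[\phi(Z^x_t)\mid \mathcal{F}_s] = S_{t-s}\phi(Z^x_s)$ via the Markov property of the OU process. Under (iii)--(iv) of Hypothesis \ref{hypothe}, $S_u\phi$ is spatially differentiable for every $u>0$ with $\|DS_u\phi\|_\infty \leq \|\Lambda(u)\|_{\mathcal L(H)}\|\phi\|_\infty$. Applying It\^o's formula to $s\mapsto S_{t-s}\phi(Z^x_s)$ and cancelling the $\d s$ term against the backward Kolmogorov equation satisfied by $S_{\cdot}\phi$ produces the representation
$$\phi(Z^x_t) - S_t\phi(x) = \int_0^t \big\langle \sqrt{Q}\,DS_{t-s}\phi(Z^x_s),\,\d W_s\big\rangle.$$

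Since $\E L_t = 0$, It\^o's isometry together with $B(H)\subset Q^{1/2}(H)$ (so that $\sqrt{Q}\,Q^{-1/2}B = B$) yields
$$I_1(t,x) = \int_0^t \E\big[\big\langle \sqrt{Q}\,DS_{t-s}\phi(Z^x_s),\,Q^{-1/2}B(Z^x_s)\big\rangle\big]\,\d s = \int_0^t \E\big[\big\langle DS_{t-s}\phi(Z^x_s),\,B(Z^x_s)\big\rangle\big]\,\d s.$$
Then for fixed $s\in[0,t)$ and $y\in H$ I would compute $\<DS_{t-s}\phi(y), B(y)\>$ by a direct Gaussian integration by parts. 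Differentiating
$$S_u\phi(y)=\int_H \phi(e^{uA}y+z)\,N_{0,Q_u}(\d z)$$
in $y$ and performing a Cameron--Martin shift (permissible thanks to $e^{uA}H\subset Q_u^{1/2}H$) gives, for any $h\in H$,
$$\<DS_u\phi(y),h\> = \E\big[\phi(Z^y_u)\,\big\langle \Lambda(u)h,\,Q_u^{-1/2}(Z^y_u-e^{uA}y)\big\rangle\big].$$

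Specialising to $u=t-s$, $y=Z^x_s$, $h=B(Z^x_s)$ and reading the outer expectation through the Markov property (which identifies, in conditional law, $Z^y_u$ with $Z^x_t$ given $\mathcal{F}_s$), I obtain
$$\<DS_{t-s}\phi(Z^x_s),B(Z^x_s)\> = \E\big[\phi(Z^x_t)\big\langle \Lambda(t-s)B(Z^x_s),\,Q_{t-s}^{-1/2}(Z^x_t-e^{(t-s)A}Z^x_s)\big\rangle\,\big|\,\mathcal{F}_s\big].$$
Taking total expectation, using \eqref{subsec-proof.1} to rewrite $Z^x_t-e^{(t-s)A}Z^x_s$ as $Z_t-e^{(t-s)A}Z_s$, and applying Fubini yields the announced identity.

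The main obstacle is that $\phi$ is only bounded and uniformly continuous, so $S_{t-s}\phi$ is not a priori regular enough to apply It\^o's formula up to $s=t$, and $\|DS_{t-s}\phi\|_\infty$ may blow up as $s\uparrow t$ at the rate $\|\Lambda(t-s)\|_{\mathcal L(H)}$. Both difficulties are handled by an approximation $\phi_\varepsilon := S_\varepsilon\phi$, which is smooth under the strong Feller property encoded in (iii), establishing the identity for each $\phi_\varepsilon$ and then letting $\varepsilon \downarrow 0$; uniform continuity of $\phi$ gives $\phi_\varepsilon\to\phi$ uniformly on $H$, while the integrability of $\|\Lambda(\cdot)\|_{\mathcal L(H)}$ from (iv) together with the boundedness of $B$ ensures dominated convergence of the time integrand.
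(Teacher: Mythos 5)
Your proposal follows essentially the same route as the paper's proof: you represent $\phi(Z^x_t)-S_t\phi(x)$ as a stochastic integral via the closed martingale $S_{t-s}\phi(Z^x_s)$, compute $\E[\phi(Z^x_t)L_t]$ by the It\^o isometry (using $\sqrt{Q}\,Q^{-1/2}B=B$), and convert $\<DS_{t-s}\phi(Z^x_s),B(Z^x_s)\>$ into the Gaussian bracket via the derivative formula \eqref{derivative-formula.1} and the Markov property; this is exactly the chain \eqref{lem-1.2}--\eqref{lem-1.4}. The only real difference is the regularization near $s=t$: you smooth the datum by $\phi_\varepsilon=S_\varepsilon\phi$ and let $\varepsilon\downarrow 0$, whereas the paper keeps $\phi$ fixed, stops the It\^o formula at $t_0<t$, and lets $t_0\uparrow t$. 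Both devices are legitimate ways around the non-square-integrability of $\|\Lambda(t-s)\|_{\mathcal L(H)}$ near $s=t$.

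However, the justification of your final limit contains a genuine error at precisely the delicate point. The claim that uniform continuity of $\phi$ gives $S_\varepsilon\phi\to\phi$ uniformly on $H$ is false in general: one has $S_\varepsilon\phi(x)-\phi(x)=\int_H[\phi(e^{\varepsilon A}x+y)-\phi(x)]\,N_{Q_\varepsilon}(\d y)$, and uniform continuity only helps where $|e^{\varepsilon A}x-x|$ is small; for an unbounded generator $A$ the convergence $e^{\varepsilon A}x\to x$ is not uniform in $x$ over $H$ (not even over bounded sets), so $\sup_{x\in H}|S_\varepsilon\phi(x)-\phi(x)|$ need not vanish. This is exactly the obstruction that the paper's Lemma \ref{2-lem-1} is designed to overcome, via tightness of the family $\{N_{Q_s}\}_{s\le T}$ and uniform convergence of $S_u\tilde\phi\to\tilde\phi$ only on compact sets. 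Your argument can be repaired without changing its structure: what you actually need is not uniform convergence on $H$ but $\E|\phi_\varepsilon(Z^x_t)-\phi(Z^x_t)|^2\to 0$, which follows from pointwise convergence plus dominated convergence; then $|\E[(\phi_\varepsilon-\phi)(Z^x_t)L_t]|\le\|(\phi_\varepsilon-\phi)(Z^x_t)\|_{L^2}\|L_t\|_{L^2}\to 0$, and on the right-hand side the integrand at time $s$ is bounded by $\|(\phi_\varepsilon-\phi)(Z^x_t)\|_{L^2}\,\|B\|_\infty\,\|\Lambda(t-s)\|_{\mathcal L(H)}$, dominated by the integrable function $2\|\phi\|_\infty\|B\|_\infty\|\Lambda(t-s)\|_{\mathcal L(H)}$ from condition (iv). As written, though, the step is not justified, and you should also make explicit which of the two available expressions for $\<DS_{t-s}\phi_\varepsilon(y),h\>$ (differentiating $S_{t-s+\varepsilon}\phi$ versus applying \eqref{derivative-formula.1} to $S_{t-s}$ acting on $\phi_\varepsilon$) you pass to the limit in, since only the latter converges termwise to the stated formula.
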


\begin{proof}
We fix $t\in (0,T]$ and consider the backward Kolmogorov equation:
  $$\left\{ \aligned
  & \partial_s U_1(s,y) + \<Ay, D U_1(s,y)\>+ \frac12\text{Tr} \big(Q D^2 U_1(s,y) \big)=0, \quad s\in [0,t],\\
  & U_1(t,y)= \phi(e^{tA}x + y).
  \endaligned \right. $$
Then we have
  \begin{equation}\label{lem-1.1}
  U_1(s,y)= \E\big[\phi(e^{tA}x + Z_t)| Z_s=y\big]= \E\big[\phi\big(e^{tA}x + e^{(t-s)A}y + Z_{t-s} \big)\big] = \E\big[\phi\big(e^{tA}x + Z^y_{t-s} \big)\big].
  \end{equation}
In particular, $U_1(t,Z_t)= \phi(e^{tA}x + Z_t)= \phi(Z^x_t)$. For any $h\in H$, it is well known that  (see e.g. \cite[Proposition 2.28]{DaPrato})
  \begin{equation}\label{lem-1.2}
  \<D U_1(s,y),h\>= \E\Big[\phi(e^{tA}x + Z^y_{t-s})\, \big\<\Lambda(t-s)h, Q_{t-s}^{-1/2} (Z^y_{t-s}- e^{(t-s)A} y) \big\> \Big].
  \end{equation}

Now we are ready to find the expression of $I_1(t,x)$ defined in \eqref{prop-Girsanov.1}. For any $t_0\in (0,t)$, by the It\^o formula and the equation satisfied by $U_1(s,y)$,
  $$U_1(t_0, Z_{t_0})= U_1(0, 0)+ \int_0^{t_0} \big\<D U_1(s, Z_s), \sqrt{Q}\, \d W_s\big\>.$$
Here, we stress that we cannot take $t_0=t$ since, by the following rough estimate, the stochastic integral may not make sense. Indeed, from \eqref{lem-1.2} we conclude that $|D U_1(s,y)| \leq \|\Lambda(t-s)\|_{\L(H)} \|\phi \|_\infty$, thus,
  $$\aligned
  \E \int_0^t \big|\sqrt{Q} D U_1(s, Z_s)\big|^2\,\d s &\leq (\text{Tr} \,Q)\E \int_0^t |D U_1(s, Z_s)|^2\, \d s \\
  &\leq (\text{Tr} \,Q) \|\phi \|_\infty^2 \int_0^t \|\Lambda(t-s)\|_{\L(H)}^2\, \d s .
  \endaligned $$
The last integral is infinite since $\|\Lambda(s)\|_{\L(H)} \geq C/ s^{1/2}$ in examples.

Recall that $\phi(Z^x_t)= U_1(t,Z_t)$; we have
  \begin{equation}\label{lem-1.2.5}
  I_1(t,x)= \E\Big[U_1(t,Z_t) M^{(1)}_t \Big] = \E\Big[\big( U_1(t,Z_t)- U_1(t_0, Z_{t_0})\big) M^{(1)}_t \Big]+ \E\Big[ U_1(t_0, Z_{t_0}) M^{(1)}_t \Big].
  \end{equation}
As remarked above Proposition \ref{prop-Girsanov}, $M^{(1)}_t$ has finite moments of all orders; by Lemma \ref{2-lem-1} below, we obtain
  \begin{equation}\label{lem-1.2.8}
  \lim_{t_0\uparrow t} \E\Big[\big( U_1(t,Z_t)- U_1(t_0, Z_{t_0})\big) M^{(1)}_t \Big] =0.
  \end{equation}
Next, as $M^{(1)}_t= \int_0^t M^{(0)}_s\, \d L_s= L_t= \int_0^t \big\<Q^{-1/2} B(Z^x_s), \d W_s \big\> $ is a martingale, we have
  \begin{equation}\label{lem-1.3}
  \aligned
  \E\Big[ U_1(t_0, Z_{t_0}) M^{(1)}_t \Big]&= \E\Big[ U_1(t_0, Z_{t_0}) M^{(1)}_{t_0} \Big]\\
  &= \E \bigg[\int_0^{t_0} \big\<D U_1(s, Z_s),\sqrt{Q}\, \d W_s\big\> \int_0^{t_0} \big\<Q^{-1/2} B(Z^x_s), \d W_s \big\> \bigg]\\
  &=\E \bigg[\int_0^{t_0} \big\<D U_1(s, Z_s), B(Z^x_s)\big\> \, \d s \bigg] .
  \endaligned
  \end{equation}
By \eqref{lem-1.2} and the Markov property,
  $$ \aligned
  \<D U_1(s, Z_s),h\> &= \E\Big[\phi(e^{tA}x + Z^y_{t-s}) \big\<\Lambda(t-s)h, Q_{t-s}^{-1/2} (Z^y_{t-s}- e^{(t-s)A} y) \big\> \Big]_{y=Z_s} \\
  &= \E\Big[\phi(e^{tA}x + Z_t)\big\< \Lambda(t-s)h, Q_{t-s}^{-1/2} (Z_t- e^{(t-s)A} Z_s) \big\> \big| Z_s \Big] \\
  &= \E\Big[\phi(Z^x_t)\big\< \Lambda(t-s)h, Q_{t-s}^{-1/2} (Z_t- e^{(t-s)A} Z_s)\big\> \big| \mathcal F_s\Big].
  \endaligned $$
Since $Z^x_s$ is $\mathcal F_s$-measurable, we have
  \begin{equation}\label{lem-1.4}
  \big\<D U_1(s, Z_s), B(Z^x_s)\big\>= \E\Big[\phi(Z^x_t)\big\< \Lambda(t-s)B(Z^x_s), Q_{t-s}^{-1/2} (Z_t- e^{(t-s)A} Z_s)\big\> \big| \mathcal F_s\Big].
  \end{equation}
Substituting this formula into the right hand side of \eqref{lem-1.3} yields
  $$\aligned
  \E\Big[ U_1(t_0, Z_{t_0}) M^{(1)}_t \Big] &= \int_0^{t_0} \E\Big[\phi(Z^x_t)\,  \big\<\Lambda(t-s) B(Z^x_s), Q_{t-s}^{-1/2} (Z_t- e^{(t-s)A} Z_s)\big\> \Big] \, \d s.
  \endaligned $$
Finally, we show that we can let $t_0\uparrow t$ on the right hand side. Note that $\phi\in C_b(H)$ and, conditioned on $\mathcal F_s$, $\big\<\Lambda(t-s) B(Z^x_s), Q_{t-s}^{-1/2} (Z_t- e^{(t-s)A} Z_s)\big\>$ is a centered Gaussian random variable with variance $|\Lambda(t-s) B(Z^x_s)|^2$; thus,
  $$\aligned
  &\, \Big| \E\Big[\phi(Z^x_t)\,  \big\<\Lambda(t-s) B(Z^x_s), Q_{t-s}^{-1/2} (Z_t- e^{(t-s)A} Z_s)\big\> \Big] \Big|\\
  \leq &\, \|\phi\|_\infty\, \E\Big[ \E \Big(\big|  \big\<\Lambda(t-s) B(Z^x_s), Q_{t-s}^{-1/2} (Z_t- e^{(t-s)A} Z_s)\big\> \big| \Big| \mathcal F_s \Big) \Big] \\
  \leq &\, \|\phi\|_\infty\, \E \big| \Lambda(t-s) B(Z^x_s) \big| \leq \|\phi\|_\infty \|B\|_\infty \| \Lambda(t-s) \|_{\L(H)}.
  \endaligned $$
Condition (iv) in Hypothesis \ref{hypothe} implies $\int_0^t \| \Lambda(t-s) \|_{\L(H)} \, \d s<\infty$; by the dominated convergence theorem, we can take the limit $t_0\uparrow t$. Taking into account the facts \eqref{lem-1.2.5} and \eqref{lem-1.2.8}, we finish the proof by letting $t_0\uparrow t$.
\end{proof}

\begin{lemma}\label{2-lem-1}
Let $Z_s$ and $U_1(s,y)$ be as in Proposition \ref{lem-1}. It holds that, for any $p\geq 1$,
  $$\lim_{s\uparrow t} \E |U_1(s, Z_s) - U_1(t,Z_t)|^p=0.$$
\end{lemma}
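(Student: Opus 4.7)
The plan is to identify $U_1(s, Z_s)$ as a bounded martingale with terminal value $\phi(Z^x_t)$ and then apply Doob's $L^p$-martingale convergence theorem. First I would use formula \eqref{lem-1.1}, namely $U_1(s, y) = \E[\phi(e^{tA}x + Z^y_{t-s})]$, together with the decomposition $Z_t = e^{(t-s)A} Z_s + (Z_t - e^{(t-s)A} Z_s)$ --- whose second summand is independent of $\mathcal F_s$ with the same distribution as $Z_{t-s}$ --- to deduce from the Markov property of the Ornstein-Uhlenbeck process that
$$U_1(s, Z_s) = \E\big[\phi(e^{tA}x + Z_t)\big|\, \mathcal F_s\big] = \E\big[\phi(Z^x_t)\big|\, \mathcal F_s\big].$$
This shows that $\{U_1(s, Z_s)\}_{s \in [0,t]}$ is a martingale bounded by $\|\phi\|_\infty$, with $U_1(t, Z_t) = \phi(Z^x_t)$ at the terminal time.

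Next, since $\phi(Z^x_t) \in L^\infty \subset L^p$ for every $p \geq 1$, I would appeal to the $L^p$-martingale convergence theorem applied to the increasing filtration $\{\mathcal F_s\}_{s < t}$ to conclude that
$$U_1(s, Z_s) \longrightarrow \E\big[\phi(Z^x_t)\big|\, \mathcal F_{t-}\big] \quad \text{in } L^p \text{ as } s \uparrow t.$$
To identify this limit with $\phi(Z^x_t)$ itself, it suffices to check that $\phi(Z^x_t)$ is $\mathcal F_{t-}$-measurable. Under Hypothesis \ref{hypothe} the map $s \mapsto Q_s$ is continuous in trace norm and $s \mapsto e^{sA} x$ is strongly continuous, so $Z_s \to Z_t$ in $L^2(\Omega;H)$, hence in probability, as $s \uparrow t$; after the usual augmentation of the filtration this makes $Z_t$, and therefore $\phi(Z^x_t)$, measurable with respect to $\mathcal F_{t-}$.

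The only delicate point I foresee is this last measurability claim. Should one wish to avoid any appeal to filtration augmentation, an essentially equivalent direct route is to prove $L^1$ convergence by hand from the uniform continuity of $\phi$ together with the two estimates $\E|e^{(t-s)A} Z_s - Z_t|^2 \to 0$ and $\E|Z_{t-s}|^2 = \text{Tr}(Q_{t-s}) \to 0$, and then upgrade to $L^p$ using the uniform bound $|U_1(s, Z_s)| \leq \|\phi\|_\infty$. Either way, the crux of the argument is the martingale representation $U_1(s, Z_s) = \E[\phi(Z^x_t)\mid\mathcal F_s]$, after which the conclusion is a routine consequence of $L^p$-continuity of closed martingales.
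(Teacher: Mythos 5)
Your proof is correct, and it takes a genuinely different route from the paper's. The paper writes $U_1(s,\cdot)=S_{t-s}\tilde\phi$ with $\tilde\phi(y)=\phi(e^{tA}x+y)$ and splits $\E|U_1(s,Z_s)-U_1(t,Z_t)|$ into $\E|(S_{t-s}\tilde\phi)(Z_s)-\tilde\phi(Z_s)|+\E|\tilde\phi(Z_s)-\tilde\phi(Z_t)|$; because the first term involves the semigroup's shift of the base point by $e^{(t-s)A}Z_s-Z_s$, which is not small uniformly over $H$, the paper must first prove tightness of the family $\{N_{Q_s}:s\in[0,T]\}$ via Prohorov's theorem and then locally uniform convergence $S_r\tilde\phi\to\tilde\phi$ on compact sets. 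Your martingale identification $U_1(s,Z_s)=\E[\phi(Z^x_t)\mid\mathcal F_s]$ (which is indeed what \eqref{lem-1.1} plus the Markov property says) bypasses all of this: the family is a closed martingale bounded by $\|\phi\|_\infty$, and $L^p$-martingale convergence gives the limit $\E[\phi(Z^x_t)\mid\mathcal F_{t-}]$. The measurability point you flag is real but harmless even without augmenting the filtration: $Z_t$ is the $L^2$-limit of the $\mathcal F_s$-measurable variables $e^{(t-s)A}Z_s+\eta_s-\eta_s$, more precisely $Z_t=\lim_{s\uparrow t}Z_s$ in $L^2$ with each $Z_s$ being $\mathcal F_{t-}$-measurable, so $\phi(Z^x_t)$ is measurable with respect to the $\P$-completion of $\mathcal F_{t-}$, and conditional expectation is insensitive to completion by null sets, whence $\E[\phi(Z^x_t)\mid\mathcal F_{t-}]=\phi(Z^x_t)$ a.s. Your fallback direct estimate is also valid and is in fact the cleanest version of the argument: since $\phi(Z^x_t)=\tilde\phi(e^{(t-s)A}Z_s+\eta_s)$ with $\eta_s=Z_t-e^{(t-s)A}Z_s\sim N_{Q_{t-s}}$ independent of $\mathcal F_s$, the quantity $\E|U_1(s,Z_s)-\phi(Z^x_t)|$ compares $\tilde\phi$ at two points differing by $y-\eta_s$ with both $y$ and $\eta_s$ of size ${\rm Tr}(Q_{t-s})^{1/2}\to 0$, so uniform continuity plus Chebyshev finishes it; the base point $e^{(t-s)A}Z_s$ is the same on both sides, which is exactly why the compactness/tightness machinery the paper needs becomes unnecessary. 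What the paper's approach buys in exchange is a statement (Steps 1--2) of independent interest about the Ornstein--Uhlenbeck semigroup; what yours buys is brevity and a conceptual explanation (closedness of the martingale) for why the limit must be $\phi(Z^x_t)$.
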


\begin{proof}
Note that $|U_1(s, Z_s)| \leq \|\phi \|_\infty$ $\P$-a.s. for all $s\in [0,t]$; therefore, it suffices to show the limit for $p=1$. In the proofs below we borrow some ideas from \cite[Proposition 6.2]{Cerrai-94}.

\emph{Step 1}. We first show that the family $\mathcal P_T:= \{N_{Q_t}: t\in [0,T]\}$ of Gaussian measures on $H$ is tight. By the Prohorov theorem (see \cite[p. 60, Theorem 5.2]{Billingsley}), it is sufficient to show that $\mathcal P_T$ is weakly compact. Take an arbitrary subset $\{N_{Q_{t_n}} : n\geq 1\} \subset \mathcal P_T$; we have to prove the existence a weakly convergent subsequence. Since $\{t_n : n\geq 1\} \subset[0,T]$, we can find a subsequence $\{t'_n : n\geq 1\}$ which converges to some $t_0\in [0,T]$. By \cite[Proposition 2.3]{DaPrato}, the process $\{Z_t\}_{t\geq 0}$ is continuous in the mean square sense, namely, $\lim_{s\to t} \E |Z_s-Z_t|^2 =0$ for all $t\geq 0$. Then, for any $f\in C_b(H)$,
  $$\lim_{n\to \infty} \int_H f(y)\, N_{Q_{t'_n}}(\d y)= \lim_{n\to \infty} \E f\big(Z_{t'_n} \big) = \E f(Z_{t_0}) = \int_H f(y)\, N_{Q_{t_0}}(\d y), $$
where we have used the dominated convergence theorem in the second equality. This implies that $\big\{ N_{Q_{t'_n}} : n\geq 1 \big\}$ converges weakly to $N_{Q_{t_0}}\in \mathcal P_T$.

\emph{Step 2}. We show that, for any bounded and uniformly continuous $\psi:H\to \R$, it holds
  \begin{equation}\label{2-lem-1.1}
  \lim_{s\to 0} \sup_{x\in K} |S_s\psi(x) - \psi(x)| =0
  \end{equation}
for any compact set $K\subset H$. First, for any $\eps>0$, there is a $\delta >0$ such that
  $$|\psi(x) - \psi(y)| \leq \eps \quad \text{for all } x,y\in H \text{ with } |x-y|\leq \delta. $$
Since the semigroup $\{e^{sA}: s\geq 0\}$ on $H$ is strongly continuous and $K$ is a compact subset of $H$, we can find $s_0>0$ small enough such that
  $$|e^{sA} x-x| \leq \delta/2 \quad \text{for all } s\in (0, s_0],\ x\in K. $$
Now for any $x\in K$ and $s\in (0, s_0]$, we have
  $$\aligned
  |S_s\psi(x) - \psi(x)| &\leq \int_H \big|\psi(e^{sA} x+ y) -\psi(x) \big| \, N_{Q_s}(\d y) \\
  &= \bigg(\int_{\{|y|\leq \delta/2\}} + \int_{\{|y|> \delta/2\}}\bigg) \big|\psi(e^{sA} x+ y) -\psi(x) \big| \, N_{Q_s}(\d y) \\
  &\leq \eps + 2 \|\psi \|_\infty  \int_{\{|y|> \delta/2\}} \, N_{Q_s}(\d y),
  \endaligned $$
where in the third step we have used the fact that $|e^{sA} x+ y -x| \leq \delta$ for all $|y|\leq \delta/2$. Moreover,
  $$\int_{\{|y|> \delta/2\}} \, N_{Q_s}(\d y) \leq \frac4{\delta^2} \int_{H} |y|^2 \, N_{Q_s}(\d y) = \frac4{\delta^2} \text{Tr}(Q_s), $$
which tends to 0 as $s\to 0$. Note that the two estimates above are independent of $x\in K$, thus we obtain \eqref{2-lem-1.1}.

\emph{Step 3}. With the above preparations, we are ready to prove the desired limit. Recall that $t\in (0,T]$ is fixed and, by \eqref{lem-1.1}, $U_1(s,y)= \big( S_{t-s} \tilde\phi \big)(y)$ where $\tilde\phi(y)= \phi(e^{tA} x+y)$ is bounded and uniformly continuous; thus, $U_1(s,Z_s)= \big( S_{t-s} \tilde\phi \big) (Z_s)$ and $U_1(t,Z_t)= \tilde\phi(Z_t)$. As a result,
  \begin{equation}\label{2-lem-1.2}
  \aligned
  \E |U_1(s, Z_s) - U_1(t,Z_t)| &= \E\big|\big( S_{t-s} \tilde\phi \big) (Z_s)- \tilde\phi(Z_t)\big| \\
  &\leq \E\big|\big( S_{t-s} \tilde\phi \big) (Z_s)- \tilde\phi(Z_s)\big| + \E\big|\tilde\phi(Z_s) - \tilde\phi(Z_t)\big|.
  \endaligned
  \end{equation}
We denote the two quantities by $J_1$ and $J_2$ respectively. First, given $\eps >0$, by \emph{Step 1}, we can find a compact set $K_\eps \subset H$ such that $\sup_{s\in [0,T]} N_{Q_s}(H\setminus K_\eps) \leq \eps$; moreover, we deduce from \emph{Step 2} that there is a $\delta_\eps>0$ such that, for all $s\leq \delta_\eps $,
  $$\sup_{y\in K_\eps} \big|\big( S_s \tilde\phi \big) (y)- \tilde\phi(y)\big| \leq \eps. $$
Therefore, for all $s\in [t-\delta_\eps, t]$, we have
  $$\aligned
  J_1 &= \int_H \big|\big( S_{t-s} \tilde\phi \big) (y)- \tilde\phi(y)\big|\, N_{Q_s}(\d y)\\
  &= \bigg(\int_{K_\eps} + \int_{H\setminus K_\eps} \bigg) \big|\big( S_{t-s} \tilde\phi \big) (y)- \tilde\phi(y)\big|\, N_{Q_s}(\d y) \\
  &\leq \eps + 2\big\| \tilde\phi \big\|_\infty N_{Q_s}(H\setminus K_\eps) \leq \eps + 2 \|\phi \|_\infty\, \eps.
  \endaligned $$
Regarding $J_2$, note that $\tilde\phi$ is bounded and uniformly continuous, and $Z_s \to Z_t$ in mean square sense, hence, the dominated convergence theorem implies $\lim_{s\uparrow t} J_2=0$. Combining these results, we finish the proof by letting $s\uparrow t$ in \eqref{2-lem-1.2}.
\end{proof}

Now we are ready to prove the first main result of the paper.

\begin{proof}[Proof of Theorem \ref{main-thm}]
We will prove $I_n(t,x)= v_n(t,x)$ for all $n\geq 1$. Indeed, we will show that ($s_0=t$)
  \begin{equation}\label{proof-2.0}
  \aligned
  I_n(t,x) &= \int_0^t \d s_1 \int_0^{s_1} \d s_2\cdots \int_0^{s_{n-1}} \d s_n \\
  &\quad \E\Bigg[\phi(Z^x_t)\prod_{i=1}^n \big\<\Lambda(s_{i-1}-s_i) B(Z^x_{s_i}), Q_{s_{i-1}-s_i}^{-1/2} (Z_{s_{i-1}}- e^{(s_{i-1}-s_i)A} Z_{s_i}) \big\>  \Bigg].
  \endaligned
  \end{equation}
Once we have this formula, changing the variables $s_i= r_{n+1-i},\ 1\leq i\leq n$ and using \eqref{subsec-proof.1} yield the result.

We prove \eqref{proof-2.0} by induction. Proposition \ref{lem-1} shows the formula holds for $n=1$. Next, assume we have proved \eqref{proof-2.0} for $n-1$, namely, for any bounded and uniformly continuous $\phi:H\to \R$ and $t>0$, it holds that
  \begin{equation}\label{proof-2.0.5}
  \aligned
  I_{n-1}(t,x) &= \E\Big[\phi(Z^x_t) M^{(n-1)}_t \Big] \\
  &= \int_0^t \d s_1 \int_0^{s_1} \d s_2\cdots \int_0^{s_{n-2}} \d s_{n-1} \\
  &\quad \E\Bigg[\phi(Z^x_t)\prod_{i=1}^{n-1} \big\<\Lambda(s_{i-1}-s_i) B(Z^x_{s_i}), Q_{s_{i-1}-s_i}^{-1/2} (Z_{s_{i-1}}- e^{(s_{i-1}-s_i)A} Z_{s_i}) \big\>  \Bigg].
  \endaligned
  \end{equation}
We turn to prove it for $n$. By the definition of $I_n(t,x)$ in \eqref{prop-Girsanov.1}, we have
  $$I_n(t,x)= \E\Big[\phi(Z^x_t) M^{(n)}_t \Big]= \E\Big[U_1(t,Z_t) M^{(n)}_t \Big],$$
where $U_1$ is the function defined in the proof of Lemma \ref{lem-1}. Recall that
  $$M^{(n)}_t= \int_0^t M^{(n-1)}_s \d L_s= \int_0^t M^{(n-1)}_s \big\<Q^{-1/2} B(Z^x_s), \d W_s\big\>,$$
and $\big\{M^{(i)}_t \big\}_{t\geq 0},\, i\geq 1$, are martingales with finite moments of all orders (see the remark above Proposition \ref{prop-Girsanov}). Therefore, similarly as in the proof of Lemma \ref{lem-1}, first applying the It\^o formula to $U_1(s, Z_s)$ on some interval $[0,t_0]$ with $t_0< t$ and then letting $t_0\uparrow t$, we obtain
  \begin{equation}\label{proof-2.1}
  I_n(t,x) =\int_0^t \E\Big[M^{(n-1)}_{s_1} \big\< B(Z^x_{s_1}),D U_1({s_1}, Z_{s_1})\big\> \Big]\, \d s_1.
  \end{equation}

Next, for any $s_1\in [0,t)$, we define $\phi_{s_1}(y):= \big\< B(y),D U_1(s_1, y-e^{s_1A}x) \big\>$ which is bounded and uniformly continuous on $H$; then \eqref{proof-2.1} becomes
  \begin{equation}\label{proof-2.2}
  I_n(t,x) =\int_0^t \E\Big[M^{(n-1)}_{s_1} \phi_{s_1} (Z^x_{s_1}) \Big]\, \d s_1.
  \end{equation}
Applying the induction hypothesis \eqref{proof-2.0.5} to $t=s_1$ and $\phi= \phi_{s_1}$, we have
  $$\aligned
  \E\Big[M^{(n-1)}_{s_1} \phi_{s_1}\big(Z^x_{s_1} \big) \Big] &= \int_0^{s_1} \d s_2 \int_0^{s_2} \d s_3\cdots \int_0^{s_{n-1}} \d s_n \\
  &\quad \E\Bigg[\phi_{s_1}(Z^x_{s_1})\prod_{i=2}^n \big\<\Lambda(s_{i-1}-s_i) B(Z^x_{s_i}), Q_{s_{i-1}-s_i}^{-1/2} (Z_{s_{i-1}}- e^{(s_{i-1}-s_i)A} Z_{s_i}) \big\>  \Bigg].
  \endaligned $$
By the definition of $\phi_{s_1}$,
  $$\aligned
  &\ \E\Bigg[\phi_{s_1}(Z^x_{s_1})\prod_{i=2}^n \big\<\Lambda(s_{i-1}-s_i) B(Z^x_{s_i}), Q_{s_{i-1}-s_i}^{-1/2} (Z_{s_{i-1}}- e^{(s_{i-1}-s_i)A} Z_{s_i}) \big\>  \Bigg] \\
  =&\ \E\Bigg[\big\< B(Z^x_{s_1}),D U_1({s_1}, Z_{s_1})\big\> \prod_{i=2}^n \big\<\Lambda(s_{i-1}-s_i) B(Z^x_{s_i}), Q_{s_{i-1}-s_i}^{-1/2} (Z_{s_{i-1}}- e^{(s_{i-1}-s_i)A} Z_{s_i}) \big\>  \Bigg] \\
  =&\ \E\Bigg[ \E\Big(\phi(Z^x_t) \big\< \Lambda(t-{s_1})B(Z^x_{s_1}), Q_{t-{s_1}}^{-1/2} (Z_t- e^{(t-{s_1})A} Z_{s_1})\big\> \big| \mathcal F_{s_1}\Big) \\
  &\hskip15pt \times \prod_{i=2}^n \big\<\Lambda(s_{i-1}-s_i) B(Z^x_{s_i}), Q_{s_{i-1}-s_i}^{-1/2} (Z_{s_{i-1}}- e^{(s_{i-1}-s_i)A} Z_{s_i}) \big\>  \Bigg],
  \endaligned $$
where in the last step we have used \eqref{lem-1.4} with $s=s_1$. As the second part (the product of $i=2,\cdots,n$) in the expectation is $\mathcal F_{s_1}$-measurable, we arrive at
  $$\aligned
  &\ \E\Bigg[\phi_{s_1}(Z^x_{s_1})\prod_{i=2}^n \big\<\Lambda(s_{i-1}-s_i) B(Z^x_{s_i}), Q_{s_{i-1}-s_i}^{-1/2} (Z_{s_{i-1}}- e^{(s_{i-1}-s_i)A} Z_{s_i}) \big\>  \Bigg] \\
  =&\ \E\Bigg[ \phi(Z^x_t) \prod_{i=1}^n \big\<\Lambda(s_{i-1}-s_i) B(Z^x_{s_i}), Q_{s_{i-1}-s_i}^{-1/2} (Z_{s_{i-1}}- e^{(s_{i-1}-s_i)A} Z_{s_i}) \big\>  \Bigg],
  \endaligned $$
where $s_0=t$. Therefore,
  $$\aligned
  \E\Big[M^{(n-1)}_{s_1} \phi_{s_1}\big(Z^x_{s_1} \big) \Big] &= \int_0^{s_1} \d s_2 \int_0^{s_2} \d s_3\cdots \int_0^{s_{n-1}} \d s_n \\
  &\quad \E\Bigg[ \phi(Z^x_t) \prod_{i=1}^n \big\<\Lambda(s_{i-1}-s_i) B(Z^x_{s_i}), Q_{s_{i-1}-s_i}^{-1/2} (Z_{s_{i-1}}- e^{(s_{i-1}-s_i)A} Z_{s_i}) \big\>  \Bigg].
  \endaligned $$
Inserting this identity into \eqref{proof-2.2} yields the desired formula for $I_n(t,x)$.
\end{proof}

\section{Kolmogorov equations with unbounded nonlinearities} \label{sec-Kolmogorov}

Our purpose here is to prove Theorem \ref{main-thm-0}: the existence part will be proved by using the iteration scheme \eqref{new-iteration}, while the uniqueness part follows by applying the same idea to \eqref{mild-sol}.

First, we describe the idea of proof for the sake of reader's understanding. We shall write in the sequel $\|\cdot \|_{L^p}$ or $ \|\cdot \|_{L^p(\mu)}$ for the norm in $L^{p} (H,\mu),\, p\geq 1$. The same notation will be used for $H$-valued functions. Recall the iteration scheme \eqref{new-iteration}. The growth condition on $B$ implies that $B\in L^q(H,\mu; H)$ for any $q>1$ and, thanks to Corollary \ref{cor-moment} in the appendix, we can obtain explicit estimate on $\|B\|_{L^q(\mu)}$. Using the H\"older inequality, the integrability of $k^n_s$, and thus of $v_{n+1}(s)$, is lower than that of $D v_n(s)$ which has the same integrability as $v_n(s)$. Assume that $v_n(s) \in L^{p_n}(H,\mu)$ for all $s>0$ and $n\geq 1$; then $\{p_n\}_{n\in \N}$ is strictly decreasing. In order to prove Theorem \ref{main-thm-0}, we also need $p_n>\bar p,\, n\in \N$. These considerations lead us to the search of two sequences $\{p_n\}_{n\in \N}$ and $\{q_n\}_{n\in \N}$ such that
  \begin{equation}\label{exponents}
  \frac1{p_n} = \frac1{p_{n-1}} + \frac1{q_n}, \quad n\geq 1.
  \end{equation}
We shall make use of the $L^{q_n}(H,\mu; H)$-norm of the nonlinear function $B:H\to H$. In view of the estimate in Corollary \ref{cor-moment}, the exponents $q_n$ should not grow too fast.

Now we define the two sequences $\{p_n\}_{n\in \N}$ and $\{q_n\}_{n\in \N}$ as follows. Recall the condition on $\beta$ in Hypothesis \ref{hypothe-1}; we can find $\kappa >1$ such that
  \begin{equation}\label{paramets}
  \beta \kappa < 2(1-\delta).
  \end{equation}
Next, for fixed $\bar p\in (1,p_0)$, since $\kappa >1$, there exists $n_0\in \N$ such that
  $$\sum_{n=n_0}^\infty \frac1{n^\kappa} < \frac1{\bar p} - \frac1{p_0}. $$
Set
  \begin{equation}\label{paramets-q}
  q_n= (n+n_0)^\kappa,\quad n\geq 1
  \end{equation}
and determine $p_n$ as in \eqref{exponents}. This implies
  $$\frac1{p_n} = \frac1{p_{0}} + \frac1{(1+n_0)^\kappa} + \frac1{(2+n_0)^\kappa} + \cdots + \frac1{(n+n_0)^\kappa} < \frac1{p_0} + \frac1{\bar p} -\frac1{p_0} = \frac1{\bar p}\, ,$$
thus $p_n> \bar p$ for all $n\in \N$.

Thanks to \eqref{exponents}, if $D v_n(s) \in L^{p_n}(H, \mu; H)$ for all $s>0$, then by H\"older's inequality,
  $$\|k^n_s\|_{L^{p_{n+1}}(\mu)} \leq \|B\|_{L^{q_{n+1}}(\mu)} \|D v_n(s)\|_{L^{p_{n}}(\mu)}. $$
Combining this with the first equality in \eqref{new-iteration}, we can estimate the norms $\|v_{n+1}(s) \|_{L^{p_{n+1}}(\mu)}$ and $\| D v_{n+1}(s) \|_{L^{p_{n+1}}(\mu)}$ (the latter requires the strong Feller property of $\{S_t\}_{t\geq 0}$). According to the above choices  \eqref{exponents}--\eqref{paramets-q} of the parameters, we can show that (see Proposition \ref{prop-convergence}) the two series below are convergent:
  $$\sum_{n=0}^\infty\|v_n(s) \|_{L^{p_{n}}(\mu)} <+\infty, \quad \sum_{n=0}^\infty \|D v_n(s) \|_{L^{p_{n}}(\mu)} <+\infty. $$
Next, since $p_n> \bar p$ for all $n\in \N$, one has
  $$\sum_{n=0}^\infty \|v_n(s) \|_{L^{\bar p}(\mu)} \leq \sum_{n=0}^\infty\|v_n(s) \|_{L^{p_{n}}(\mu)}, \quad \sum_{n=0}^\infty \|D v_n(s) \|_{L^{\bar p}(\mu)} \leq \sum_{n=0}^\infty \|D v_n(s) \|_{L^{p_{n}}(\mu)}. $$
Therefore we conclude that both series
  $$\sum_{n=0}^\infty v_n(s)  \quad \mbox{and}\quad \sum_{n=0}^\infty D v_n(s) $$
converge in $L^{\bar p}(H, \mu)$. The limit is a solution of \eqref{KolE} in $L^{\bar p}(H, \mu)$. The same ideas can be used to prove the uniqueness of solutions, see the proof of Theorem \ref{thm-existence}.

\begin{remark}
\begin{itemize}
\item[\rm(1)] We point out that, in \eqref{paramets-q}, changing the definition as $q_n= (n+n_0)[\log(n+ n_0)]^\kappa\, (\kappa>1)$ will not relax the growth condition on $B$, see {\rm (v$'$)} in Hypothesis \ref{hypothe-1}. This can be easily seen from the proofs of Lemma \ref{lem-estim-moments} and Proposition \ref{prop-convergence} below: we still need $\beta <2(1-\delta)$ to show the following limit
      $$\lim_{n\to \infty} (n+n_0)^{\beta/2} [\log(n+n_0)]^{\beta \kappa/2} \frac{\Gamma(1+(n-1)(1-\delta))} {\Gamma(1+n(1-\delta))}= 0. $$
\item[\rm(2)] In \cite{Cerrai} (see also \cite[Section 2.8.3]{DaPrato}), a weakly continuous semigroup $\{S_t\}_{t\geq 0}$ is defined in the space $UC_m(H)$ of continuous functions with polynomial growth, where $m$ is a positive integer. One might ask whether it is possible to show the existence of solutions to \eqref{KolE} by using the iteration scheme and the growth property of the semigroup. Indeed, in each step of the iteration \eqref{new-iteration}, the growth rates of the functions $v_n(t)$ increase with $n$, due to the multiplication by $B$; moreover, we have to estimate the moments of the form \eqref{lem-moment.1} (cf. \cite[Section 3]{Cerrai}), rather than that in Corollary \ref{cor-moment}. In the end, what we get is a product of a certain factorials of $i$ from 1 to $n$, instead of a single factorial as in Lemma \ref{lem-estim-moments} below. Therefore, unlike Proposition \ref{prop-convergence}, it seems impossible to show that the series obtained is convergent. Note that, in \cite[Section 5]{Cerrai}, the function $B:H \to H$ is assumed to be bounded.
\end{itemize}
\end{remark}

The rest of the section has a similar structure as \cite[Section 2]{FLR}, but we work here in the $L^p$-setting ($p<\infty$). In Section \ref{sec-first-two} we first give some preparations and then provide the formulae and estimates of the first two terms of the iteration process \eqref{new-iteration}. They will give us the clue for the expression and proof of general terms in Section \ref{sec-general}; the convergence of the iteration scheme will also be proved there. The second main result of the paper (Theorem \ref{main-thm-0}) is a consequence of Theorem \ref{thm-existence}.

\subsection{Some preparations and the first two iterations}\label{sec-first-two}

Recall the stochastic convolution $\{ W_A(t)\}_{t\geq 0}$ and the Ornstein-Uhlenbeck process $\{Z_t^x \}_{t\geq 0}$ given at the beginning of the paper; we denote their laws by $N_{Q_t}(\d y)$ and $N_{e^{tA} x, Q_t}(\d y)$, respectively. For any $h\in H$, $\big\<h, Q_t^{-1/2}W_A(t) \big>$ is a centered real Gaussian variable with variance
  \begin{equation}\label{real-Gaussian}
  \E \big\<h, Q_t^{-1/2}W_A(t) \big>^2 =|h|_H^2.
  \end{equation}
We shall write $\mathcal B(H)$ for the space of bounded measurable functions on $H$. The semigroup $\{S_t \}_{t\geq 0}$ associated to $\{Z_t^x \}_{t\geq 0}$ is defined as follows: for any $f\in \mathcal B(H)$ and $t\geq 0$,
  \begin{equation*} %\label{OU-semigroup}
  S_t f(x):= \E f(Z_t^x) = \int_{H} f(y)\, N_{e^{tA} x, Q_t}(\d y) = \int_H f\big( e^{tA} x +y \big)\, N_{Q_t}(\d y).
  \end{equation*}
Recall that the Gaussian measure $\mu$ defined in \eqref{invariant-measure} is the unique invariant measure of $\{S_t \}_{t\geq 0}$. The semigroup $\{S_t \}_{t\geq 0}$ has a unique extension to a strongly continuous semigroup of contractions in $L^p(H,\mu)$, see \cite[Theorem 10.1.5]{DPZ-02}.

It is well known that the semigroup $\{S_t \}_{t\geq 0}$ is strong Feller under conditions (i)--(iii) of Hypothesis \ref{hypothe}. The next result shows its smoothing effect in $L^p(H,\mu)$, see \cite[Proposition 10.3.1]{DPZ-02} for a proof.

\begin{proposition}\label{derivative-formula}
Let $p> 1$. Under (i)--(iii) of the Hypothesis \ref{hypothe}, for any $f\in L^p(H,\mu)$ and $t>0$, we have $S_t f \in W^{1,p}(H,\mu)$ and for any $h\in H$,
  \begin{equation}\label{derivative-formula.1}
  \<h, DS_t f(x)\>= \E \big[ f(Z_t^x) \big\< \Lambda(t)h, Q_t^{-1/2}(Z_t^x- e^{tA}x ) \big\>\big].
  \end{equation}
Moreover,
  \begin{equation}\label{derivative-formula.2}
  \| D S_tf \|_{L^p} \leq C_p \|\Lambda(t)\|_{\mathcal L(H)} \|f\|_{L^p}.
  \end{equation}
\end{proposition}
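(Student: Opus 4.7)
The plan is to follow the standard Bismut--Elworthy--Li approach in the Gaussian setting: first establish the identity \eqref{derivative-formula.1} for a dense class of regular functions by an integration by parts in the Cameron--Martin direction, then derive the bound \eqref{derivative-formula.2} by H\"older's inequality and the invariance of $\mu$, and finally extend by density.

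\textbf{Step 1 (Formula for smooth $f$).} For $f\in C_b^1(H)$, I would write
\[
S_t f(x)=\int_H f\bigl(e^{tA}x+y\bigr)\,N_{Q_t}(\d y)
\]
and differentiate under the integral in the direction $h\in H$, obtaining
\[
\<h,DS_tf(x)\>=\int_H \<Df(e^{tA}x+y),e^{tA}h\>\,N_{Q_t}(\d y).
\]
By Hypothesis \ref{hypothe}-(iii), $e^{tA}h\in Q_t^{1/2}(H)$ belongs to the Cameron--Martin space of $N_{Q_t}$, and in fact $e^{tA}h=Q_t^{1/2}\Lambda(t)h$. The Cameron--Martin integration by parts for the Gaussian measure $N_{Q_t}$ then gives
\[
\int_H \<Df(e^{tA}x+y),e^{tA}h\>\,N_{Q_t}(\d y)=\int_H f(e^{tA}x+y)\,\<\Lambda(t)h,Q_t^{-1/2}y\>\,N_{Q_t}(\d y),
\]
which is exactly \eqref{derivative-formula.1} after the substitution $y=Z_t^x-e^{tA}x$.

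\textbf{Step 2 ($L^p$-estimate).} Starting from \eqref{derivative-formula.1} and applying H\"older's inequality with exponents $p$ and $p/(p-1)$ conditionally on the law of $Z_t^x$, I would get
\[
|\<h,DS_tf(x)\>|\leq \bigl(\E |f(Z_t^x)|^p\bigr)^{1/p}\,\bigl(\E|\<\Lambda(t)h,Q_t^{-1/2}(Z_t^x-e^{tA}x)\>|^{p/(p-1)}\bigr)^{(p-1)/p}.
\]
Conditionally on the mean, $\<\Lambda(t)h,Q_t^{-1/2}(Z_t^x-e^{tA}x)\>$ is a centered real Gaussian with variance $|\Lambda(t)h|^2\leq \|\Lambda(t)\|_{\mathcal L(H)}^2|h|^2$ by \eqref{real-Gaussian}, so the second factor is bounded by $c_p\|\Lambda(t)\|_{\mathcal L(H)}|h|$. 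Taking the supremum over $|h|=1$ yields
\[
|DS_tf(x)|^p\leq C_p\,\|\Lambda(t)\|_{\mathcal L(H)}^p\,S_t(|f|^p)(x),
\]
and integrating against $\mu$, which is invariant for $\{S_t\}_{t\geq 0}$, gives \eqref{derivative-formula.2}.

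\textbf{Step 3 (Density).} Since $C_b^1(H)$ is dense in $L^p(H,\mu)$ for $p>1$ and the right-hand sides of \eqref{derivative-formula.1} and \eqref{derivative-formula.2} are continuous in $f$ with respect to the $L^p$-norm (by the estimate of Step~2), I would pass to the limit along an approximating sequence $f_n\to f$ in $L^p(H,\mu)$: this identifies the weak gradient of $S_tf$ via \eqref{derivative-formula.1} and yields $S_tf\in W^{1,p}(H,\mu)$.

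The main delicate point is the Cameron--Martin integration by parts in Step~1, which requires that $e^{tA}h$ lie in $Q_t^{1/2}(H)$; this is precisely the content of Hypothesis \ref{hypothe}-(iii), and the boundedness of the resulting operator $\Lambda(t)$ (from the closed graph theorem) is what makes the right-hand side of \eqref{derivative-formula.1} integrable.
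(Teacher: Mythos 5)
Your proof is correct, and it is essentially the argument the paper itself relies on: the paper gives no proof of this proposition but refers to \cite[Proposition 10.3.1]{DPZ-02}, whose proof is exactly your Cameron--Martin integration by parts on $N_{Q_t}$ for smooth $f$, followed by the Gaussian-moment/H\"older bound, invariance of $\mu$, and a density (closability) argument. The only points worth making explicit in a final write-up are that $\<\Lambda(t)h,Q_t^{-1/2}y\>$ is to be read as the Paley--Wiener functional (as in \eqref{real-Gaussian}) and that the gradient is closable in $L^p(H,\mu)$ so that the limit in Step~3 genuinely identifies $DS_tf$; both are standard.
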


Indeed, by \cite[Theorem 10.3.5]{DPZ-02}, $S_t f\in C^\infty(H)$ for any $t>0$. But $S_t f$ is in general not bounded. This is easily seen from the example below: if $f(x)= \<x,h\>$ for some $h\in H$, then $S_tf(x)= \<e^{tA}x, h\>, \, x\in H$, which is unbounded.

\subsubsection{The first two terms of the iteration \eqref{new-iteration}}

We begin with the expression and estimates of the first term $v_1(t,x)$.

\begin{proposition}\label{prop-first-term}
For any $t>0$ and $x\in H$,
  \begin{equation}\label{prop-first-term.1}
  v_1(t,x) = \int_0^t \E\Big[\phi(Z_t^x) \big\<\Lambda(s) B(Z_{t-s}^x), Q_s^{-1/2}\big(Z_t^x - e^{sA} Z_{t-s}^x \big) \big\> \Big]\,\d s.
  \end{equation}
Moreover,
  $$\big\|v_1(t) \big\|_{L^{p_1}} \leq \|\phi\|_{L^{p_0}} \|B\|_{L^{q_1}} \int_0^t \|\Lambda(s)\|_{\mathcal L(H)}\,\d s$$
and
  $$\big\|D v_1(t) \big\|_{L^{p_1}} \leq \|\phi\|_{L^{p_0}} \|B\|_{L^{q_1}} \int_0^t \|\Lambda(t-s)\|_{\mathcal L(H)} \|\Lambda(s)\|_{\mathcal L(H)}\,\d s. $$
\end{proposition}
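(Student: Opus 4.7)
The plan is to proceed in three stages: derive the integral representation \eqref{prop-first-term.1}, bound $\|v_1(t)\|_{L^{p_1}}$, and bound $\|Dv_1(t)\|_{L^{p_1}}$. All three rely only on the derivative formula \eqref{derivative-formula.1}, the smoothing inequality \eqref{derivative-formula.2}, the contractivity of $\{S_t\}$ on $L^{p}(H,\mu)$, Fubini, and a single application of H\"older's inequality tailored to the exponent relation $1/p_1=1/p_0+1/q_1$.

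For the formula, I would start from the iteration \eqref{new-iteration} at $n=0$, which gives $v_1(t,x)=\int_0^t (S_{t-s}k^0_s)(x)\,\d s$ with $k^0_s(y)=\<B(y),DS_s\phi(y)\>$. Plugging $h=B(y)$ into \eqref{derivative-formula.1} rewrites $k^0_s(y)$ as $\E\big[\phi(Z_s^y)\<\Lambda(s)B(y),Q_s^{-1/2}(Z_s^y-e^{sA}y)\>\big]$. Since $(S_{t-s}k^0_s)(x)=\E[k^0_s(Z_{t-s}^x)]$, freezing $y=Z_{t-s}^x$ and invoking the Markov property of $\{Z_r^x\}$ (which identifies the conditional law of $Z_t^x$ given $\mathcal F_{t-s}$ with that of $Z_s^y$ at $y=Z_{t-s}^x$) collapses the two nested expectations into a single one, producing exactly \eqref{prop-first-term.1} after an application of Fubini.

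For the first $L^{p_1}$ bound I would use that, since $\mu$ is invariant under $\{S_t\}$, the semigroup acts as a contraction on $L^{p_1}(H,\mu)$; consequently $\|v_1(t)\|_{L^{p_1}}\le\int_0^t \|k^0_s\|_{L^{p_1}}\,\d s$. The pointwise inequality $|k^0_s(y)|\le|B(y)|\,|DS_s\phi(y)|$ combined with H\"older's inequality (placing the two factors in $L^{q_1}(\mu)$ and $L^{p_0}(\mu)$ respectively, which is allowed by $1/p_1=1/q_1+1/p_0$) yields $\|k^0_s\|_{L^{p_1}}\le\|B\|_{L^{q_1}}\|DS_s\phi\|_{L^{p_0}}$, and \eqref{derivative-formula.2} applied to $\phi\in L^{p_0}(H,\mu)$ bounds the last factor by (a constant times) $\|\Lambda(s)\|_{\mathcal L(H)}\|\phi\|_{L^{p_0}}$. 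Integrating in $s$ yields the stated estimate.

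For the second bound, differentiating under the time integral gives $Dv_1(t,x)=\int_0^t D(S_{t-s}k^0_s)(x)\,\d s$, and applying \eqref{derivative-formula.2} with exponent $p_1$ to each integrand produces $\|D(S_{t-s}k^0_s)\|_{L^{p_1}}\lesssim \|\Lambda(t-s)\|_{\mathcal L(H)}\|k^0_s\|_{L^{p_1}}$; substituting the bound on $\|k^0_s\|_{L^{p_1}}$ obtained in the previous step and integrating finishes the estimate. The main technical care is in justifying the pulling of $D$ inside the time integral, which I would handle by dominated convergence: under (iv$'$) of Hypothesis \ref{hypothe-1}, the product $\|\Lambda(t-s)\|_{\mathcal L(H)}\|\Lambda(s)\|_{\mathcal L(H)}\lesssim (t-s)^{-\delta}s^{-\delta}$ is Lebesgue integrable on $(0,t)$ for $\delta<1$, furnishing the required uniform majorant. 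No individual step is genuinely deep; the bookkeeping lies in keeping the exponents $p_1,\,p_0,\,q_1$ consistent and in checking that the Gaussian integrals in \eqref{derivative-formula.1} are well defined under the $L^{p_0}$ hypothesis on $\phi$.
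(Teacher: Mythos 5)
Your proposal is correct and follows essentially the same route as the paper: the representation comes from \eqref{derivative-formula.1} with $h=B(y)$ combined with the Markov property to collapse the nested expectations in $(S_{t-s}k^0_s)(x)=\E[k^0_s(Z^x_{t-s})]$, and both norm estimates come from H\"older with $1/p_1=1/p_0+1/q_1$, contractivity of $S_{t-s}$ on $L^{p_1}(H,\mu)$, and the smoothing bound \eqref{derivative-formula.2}. Your extra remark on justifying differentiation under the time integral via the integrable majorant $(t-s)^{-\delta}s^{-\delta}$ is a sensible addition that the paper leaves implicit.
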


\begin{proof}
The formula \eqref{prop-first-term.1} follows immediately from \eqref{new-iteration} with $n=0$ and the identity below:
  \begin{equation}\label{prop-first-term.1.5}
  \big(S_{t-s} k^0_s \big)(x) = \E\Big[\phi(Z_t^x) \big\<\Lambda(s) B(Z_{t-s}^x), Q_s^{-1/2}\big(Z_t^x - e^{sA} Z_{t-s}^x \big) \big\> \Big].
  \end{equation}
This can be shown by following the proof of \cite[Lemma 3.5]{FLR}. Using the property of conditional expectation,
  $$\aligned
  &\ \E \Big[\phi(Z_t^x) \big\<\Lambda(s) B(Z_{t-s}^x), Q_s^{-1/2} (Z_t^x- e^{sA} Z_{t-s}^x) \big\> \Big]\\
  =&\ \E\Big\{\E \Big[\phi(Z_t^x) \big\<\Lambda(s) B(Z_{t-s}^x), Q_s^{-1/2} (Z_t^x- e^{sA} Z_{t-s}^x) \big\> \big| \mathcal F_{t-s} \Big] \Big\} \\
  =&\ \E\Big\{\E \Big[\phi(Z_t^x) \big\<\Lambda(s) B(Z_{t-s}^x), Q_s^{-1/2} (Z_t^x- e^{sA} Z_{t-s}^x) \big\> \big| Z_{t-s}^x \Big] \Big\},
  \endaligned $$
where the second step is due to the Markov property. Again by the Markov property,
  $$\aligned
  &\ \E \Big[\phi(Z_t^x) \big\<\Lambda(s) B(Z_{t-s}^x), Q_s^{-1/2} (Z_t^x- e^{sA} Z_{t-s}^x) \big\> \big| Z_{t-s}^x \Big]\\
  =&\ \E \Big[\phi(Z_s^y) \big\<\Lambda(s) B(y), Q_s^{-1/2} (Z_s^y- e^{sA} y) \big\> \Big]_{y= Z_{t-s}^x }\\
  =&\ k^0_s(y) \big|_{y= Z_{t-s}^x }= k^0_s(Z_{t-s}^x),
  \endaligned $$
where the second step follows from \eqref{derivative-formula.1}. Substituting this equality into the previous one we obtain the identity \eqref{prop-first-term.1.5}.

Next, by the definition \eqref{new-iteration} of the iteration and H\"older's inequality, for any $s>0$,
  $$ \big\|k^0_s \big\|_{L^{p_1}} \leq \|B\|_{L^{q_1}} \|D v_0(s) \|_{L^{p_0}}, $$
where the parameters satisfy $\frac1{p_1} = \frac1{p_0} +\frac1{q_1}$, see \eqref{exponents}. By Proposition \ref{derivative-formula}, one has
  $$\|D v_0(s) \|_{L^{p_0}}= \|D S_s \phi\|_{L^{p_0}} \leq \|\Lambda(s)\|_{\mathcal L(H)} \|\phi\|_{L^{p_0}} .$$
Therefore,
  \begin{equation}\label{prop-first-term.2}
  \big\|k^0_s \big\|_{L^{p_1}} \leq \|\phi\|_{L^{p_0}} \|B\|_{L^{q_1}} \|\Lambda(s)\|_{\mathcal L(H)}.
  \end{equation}
Now, it is clear that
  $$\aligned
  \|v_1(t) \|_{L^{p_1}} &\leq \int_0^t \big\| S_{t-s} k^0_s\big\|_{L^{p_1}} \,\d s \leq \int_0^t \big\|  k^0_s\big\|_{L^{p_1}} \,\d s \leq \|\phi\|_{L^{p_0}} \|B\|_{L^{q_1}} \int_0^t \|\Lambda(s)\|_{\mathcal L(H)}\,\d s .
  \endaligned $$
Finally, by \eqref{derivative-formula.2},
  $$\aligned
  \|D v_1(t) \|_{L^{p_1}} &\leq \int_0^t \big\|D \big(S_{t-s} k^0_s\big)\big\|_{L^{p_1}} \,\d s \leq \int_0^t \|\Lambda(t-s) \|_{\mathcal L(H)} \big\|k^0_s \big\|_{L^{p_1}}\,\d s ,
  \endaligned $$
which, together with \eqref{prop-first-term.2}, gives us the last estimate.
\end{proof}

Next we turn to the second term $v_2(t,x)$. Similarly, we have

\begin{lemma}\label{lem-sec-term-2}
One has
\begin{align*}
k_{t}^{1}( x) & =\int_0^t \E\Big[ \phi( Z_t^x) \big\<\Lambda(s) B(Z_{t-s}^x), Q_s^{-1/2} (Z_{t}^x- e^{sA} Z_{t-s}^x) \big\> \\
  &\hskip40pt \times \big\<\Lambda(t-s) B(x), Q_{t-s}^{-1/2} (Z_{t-s}^x- e^{(t-s)A}x ) \big\> \Big] \,\d s.
\end{align*}
\end{lemma}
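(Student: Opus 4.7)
The plan is to differentiate the formula \eqref{prop-first-term.1} for $v_1(t,x)$ in the variable $x$ and then pair the resulting gradient with $B(x)$. The key technical ingredient is the derivative formula \eqref{derivative-formula.1} applied to $k^0_s$, combined with the Markov property of the Ornstein--Uhlenbeck process in the same way as in the derivation of \eqref{prop-first-term.1.5}.

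First, I would exchange differentiation and the time integral in the identity $v_1(t,x)=\int_0^t (S_{t-s}k^0_s)(x)\,\d s$, which is legitimate because the integrand $D(S_{t-s}k^0_s)(x)$ is dominated in norm by $\|\Lambda(t-s)\|_{\mathcal L(H)} \|k^0_s\|_{L^{p_1}}$, and by \eqref{prop-first-term.2} together with condition (iv$'$) of Hypothesis \ref{hypothe-1} this yields an integrable majorant. Thus
\[
k^1_t(x)=\<B(x),Dv_1(t,x)\>=\int_0^t \<B(x),D(S_{t-s}k^0_s)(x)\>\,\d s.
\]
Next, I would apply Proposition \ref{derivative-formula} (with $f=k^0_s$, time $t-s$, and direction $h=B(x)$) to obtain
\[
\<B(x),D(S_{t-s}k^0_s)(x)\>
=\E\!\left[k^0_s(Z^x_{t-s})\,\big\<\Lambda(t-s)B(x),Q_{t-s}^{-1/2}(Z^x_{t-s}-e^{(t-s)A}x)\big\>\right].
\]

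Then I would substitute the explicit representation of $k^0_s$. Since $k^0_s(y)=\<B(y),DS_s\phi(y)\>$, the derivative formula \eqref{derivative-formula.1} at the point $y$ gives
\[
k^0_s(y)=\E\!\left[\phi(Z^y_s)\,\big\<\Lambda(s)B(y),Q_s^{-1/2}(Z^y_s-e^{sA}y)\big\>\right].
\]
Evaluating at the random point $y=Z^x_{t-s}$ and invoking the Markov property exactly as in the proof of Proposition \ref{prop-first-term} (so that $Z^y_s|_{y=Z^x_{t-s}}$ becomes $Z^x_t$), I get
\[
k^0_s(Z^x_{t-s})
=\E\!\left[\phi(Z^x_t)\,\big\<\Lambda(s)B(Z^x_{t-s}),Q_s^{-1/2}(Z^x_t-e^{sA}Z^x_{t-s})\big\>\,\Big|\,\mathcal F_{t-s}\right].
\]
Since $\<\Lambda(t-s)B(x),Q_{t-s}^{-1/2}(Z^x_{t-s}-e^{(t-s)A}x)\>$ is $\mathcal F_{t-s}$-measurable, I would pull it inside the conditional expectation, use the tower property to drop the conditioning, and finally integrate in $s\in(0,t)$ to arrive at the claimed identity.

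The main technical obstacle I anticipate is the justification of the manipulations under the expectation: applying Proposition \ref{derivative-formula} to the non-bounded function $k^0_s$ requires $k^0_s\in L^{p_1}(H,\mu)$ with the estimate \eqref{prop-first-term.2}, and the Markov-property substitution requires enough integrability so that Fubini and tower property apply. Both are available under Hypothesis \ref{hypothe-1} thanks to the sublinear growth of $B$, the bound $\|\Lambda(s)\|_{\mathcal L(H)}\leq C_\delta s^{-\delta}$ with $\delta<1$, and the Gaussian moment estimates from the appendix (which control products of $B$ and the Wiener-type factors $Q_\sigma^{-1/2}(Z^x_\sigma-e^{\sigma A}x)$). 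Once these integrability points are taken care of, the combinatorial/structural part of the argument follows essentially the same template already used to prove \eqref{prop-first-term.1.5}.
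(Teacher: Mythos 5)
Your proposal is correct and follows essentially the same route as the paper's proof: express $k^1_t(x)=\int_0^t\<B(x),D(S_{t-s}k^0_s)(x)\>\,\d s$, apply Proposition \ref{derivative-formula} to $k^0_s\in L^{p_1}(\mu)$, rewrite $k^0_s(Z^x_{t-s})$ as a conditional expectation via the Markov property, and conclude with the tower property using the $\mathcal F_{t-s}$-measurability of the second factor. The extra attention you give to dominated convergence and integrability is consistent with, and slightly more explicit than, the paper's treatment.
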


\begin{proof}
The proof is similar to that of \cite[Lemma 2.7]{FLR}. By Proposition \ref{prop-first-term}, for any $t>0$, $v_1(t)\in W^{1,p_1}(H, \mu)$ and
\[\aligned
k_{t}^{1}( x) &=\left\langle B(x), D v_{1} (t,x)  \right\rangle =\, \int_{0}^{t}\big\langle B( x),D \big( S_{t-s} k^0_s\big)(x)  \big\rangle\, \d s.
\endaligned \]%
Recall that \eqref{prop-first-term.2} implies $k^0_s\in L^{p_1}(\mu)$, thus by Proposition \ref{derivative-formula},
  $$k_{t}^{1}( x) =\int_{0}^{t} \E\Big[ k^0_s(Z^x_{t-s}) \big\langle \Lambda(t-s) B(x), Q_{t-s}^{-1/2} \big( Z^x_{t-s} - e^{(t-s) A} x \big) \big\rangle \Big]\, \d s. $$
According to the first part of the proof of Proposition \ref{prop-first-term}, we have
  $$k^0_s(Z^x_{t-s})= \E \Big[\phi(Z_t^x) \big\<\Lambda(s) B(Z_{t-s}^x), Q_s^{-1/2} (Z_t^x- e^{sA} Z_{t-s}^x) \big\> \big| \mathcal F_{t-s} \Big].$$
Note that $\big\langle \Lambda(t-s) B(x), Q_{t-s}^{-1/2} \big( Z^x_{t-s} - e^{(t-s) A} x \big) \big\rangle$ is $\mathcal F_{t-s}$-measurable. Substituting this equality into the one above and using the property of conditional expectation, we obtain the desired result.
\end{proof}

Consequently, we can prove

\begin{proposition}\label{prop-second-iteration}
For any $t>0$ and $x\in H$,
\begin{align*}
  v_{2}(t,x) & =\int_{0}^{t}\! \int_{0}^{s} \E\Big[ \phi( Z_t^x) \big\<\Lambda(r) B(Z_{t-r}^x), Q_r^{-1/2} (Z_t^x - e^{rA} Z_{t-r}^x ) \big\> \\
  &\hskip60pt \times \big\<\Lambda(s-r) B(Z^x_{t-s}), Q_{s-r}^{-1/2} (Z_{t-r}^x- e^{(s-r)A} Z^x_{t-s} ) \big\> \Big] \,\d r\d s.
\end{align*}
Furthermore,
  $$\|v_2(t)\|_{L^{p_2}} \leq \|\phi\|_{L^{p_0}} \|B\|_{L^{q_1}} \|B\|_{L^{q_2}} \int_{0}^{t}\! \int_{0}^{s} \|\Lambda(s-r)\|_{\mathcal L(H)} \|\Lambda(r)\|_{\mathcal L(H)}\,\d r\d s$$
and
  $$\aligned
  \|D v_2(t)\|_{L^{p_2}} \leq \|\phi\|_{L^{p_0}} \|B\|_{L^{q_1}} \|B\|_{L^{q_2}} \int_{0}^{t}\! \int_{0}^{s} \|\Lambda(t-s)\|_{\mathcal L(H)} \|\Lambda(s-r)\|_{\mathcal L(H)} \|\Lambda(r)\|_{\mathcal L(H)}\,\d r\d s.  \endaligned $$
\end{proposition}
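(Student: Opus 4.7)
The plan is to mimic the two-step argument already used in Propositions \ref{prop-first-term} and Lemma \ref{lem-sec-term-2}: first obtain the explicit formula for $v_2(t,x)$ by applying the Ornstein--Uhlenbeck semigroup to the explicit expression of $k^1_s$, and then derive the two $L^{p_2}$-estimates by iterating the Hölder/smoothing bound used for $v_1$.

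For the formula, I start from the iteration scheme \eqref{new-iteration}, which gives $v_2(t,x)=\int_0^t (S_{t-s}k^1_s)(x)\,\d s$, and use the representation $(S_{t-s} k^1_s)(x)=\E[k^1_s(Z^x_{t-s})]$. Inserting Lemma \ref{lem-sec-term-2} (with $t$ replaced by $s$ and $x$ by the random point $Z^x_{t-s}$), the inner expectation is over an OU process starting at $Z^x_{t-s}$; by the Markov property of $\{Z^x_\tau\}_{\tau\geq 0}$, that inner process has, conditionally on $\mathcal F_{t-s}$, the law of $\{Z^x_{t-s+\tau}\}_{\tau\in[0,s]}$. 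Identifying $Z^{Z^x_{t-s}}_{s}$ with $Z^x_t$ and $Z^{Z^x_{t-s}}_{s-r}$ with $Z^x_{t-r}$, and observing that the factor $\<\Lambda(s-r) B(Z^x_{t-s}),Q_{s-r}^{-1/2}(Z^x_{t-r}-e^{(s-r)A}Z^x_{t-s})\>$ can be pulled inside the conditional expectation since $Z^x_{t-s}$ is $\mathcal F_{t-s}$-measurable, the tower property collapses the two nested expectations into a single one, yielding exactly the displayed formula for $v_2(t,x)$.

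For the norm estimates I proceed step by step along the chain $v_1\to k^1\to v_2$. By Hölder's inequality with the exponents \eqref{exponents}, $\|k^1_s\|_{L^{p_2}} \le \|B\|_{L^{q_2}}\|Dv_1(s)\|_{L^{p_1}}$, and the bound on $\|Dv_1(s)\|_{L^{p_1}}$ already proved in Proposition \ref{prop-first-term} gives
\[
\|k^1_s\|_{L^{p_2}} \le \|\phi\|_{L^{p_0}}\|B\|_{L^{q_1}}\|B\|_{L^{q_2}} \int_0^s \|\Lambda(s-r)\|_{\mathcal L(H)}\|\Lambda(r)\|_{\mathcal L(H)}\,\d r.
\]
Then, using $\|S_{t-s}\|_{L^{p_2}\to L^{p_2}}\le 1$ and integrating in $s$ gives the estimate on $\|v_2(t)\|_{L^{p_2}}$; using instead the smoothing bound \eqref{derivative-formula.2} of Proposition \ref{derivative-formula} yields $\|D(S_{t-s}k^1_s)\|_{L^{p_2}}\le C\|\Lambda(t-s)\|_{\mathcal L(H)}\|k^1_s\|_{L^{p_2}}$, and integrating in $s$ yields the estimate on $\|Dv_2(t)\|_{L^{p_2}}$.

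The main potential difficulty is the rigorous use of the Markov property in Step 1 to justify combining the two layers of expectation (the outer one coming from $S_{t-s}$ and the inner one built into the definition of $k^1_s$). The same maneuver was already carried out in the proof of Proposition \ref{prop-first-term} (identity \eqref{prop-first-term.1.5}) and in the proof of Lemma \ref{lem-sec-term-2}, so no new ingredient is needed beyond checking $\mathcal F_{t-s}$-measurability of the $Z^x_{t-s}$-dependent factor and invoking the tower property. Everything else is a direct iteration of the bounds obtained for $v_1$.
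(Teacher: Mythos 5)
Your proposal is correct and follows essentially the same route as the paper: obtain the formula by writing $v_2(t,x)=\int_0^t \E[k^1_s(Z^x_{t-s})]\,\d s$, substituting the expression of $k^1_s$ from Lemma \ref{lem-sec-term-2} and collapsing the nested expectations via the Markov property, and then derive the two bounds by chaining $\|k^1_s\|_{L^{p_2}}\le\|B\|_{L^{q_2}}\|Dv_1(s)\|_{L^{p_1}}$ with the contraction property of $S_{t-s}$ and the smoothing estimate \eqref{derivative-formula.2}. The only cosmetic slip is that the factor you describe as ``pulled inside'' already sits inside the inner expectation of $k^1_s(y)$, so only the tower property is needed at that point; this does not affect the argument.
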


\begin{proof}
By Lemma \ref{lem-sec-term-2}, for any $s>0$ and $y\in H$,
\begin{align*}
k_{s}^{1}( y)& =\int_0^s \E\Big[ \phi( Z_s^y) \big\<\Lambda(r) B(Z_{s-r}^y), Q_r^{-1/2} (Z_{s}^y- e^{rA} Z_{s-r}^y ) \big\> \\
  &\hskip40pt \times \big\<\Lambda(s-r) B(y), Q_{s-r}^{-1/2} (Z_{s-r}^y- e^{(s-r)A} y ) \big\> \Big] \,\d r.
\end{align*}
We have%
  $$\aligned
  \E \big[ k^1_s(Z^x_{t-s}) \big] &= \E \bigg\{ \int_0^s \E\Big[ \phi( Z_s^y) \big\<\Lambda(r) B(Z_{s-r}^y), Q_r^{-1/2} (Z_{s}^y- e^{rA} Z_{s-r}^y ) \big\> \\
  &\hskip60pt \times \big\<\Lambda(s-r) B(y), Q_{s-r}^{-1/2} (Z_{s-r}^y- e^{(s-r)A} y ) \big\> \Big]_{y= Z^x_{t-s}} \,\d r \bigg\} \\
  &= \int_0^s \E\Big[ \phi( Z_t^x) \big\<\Lambda(r) B(Z_{t-r}^x), Q_r^{-1/2} (Z_t^x - e^{rA} Z_{t-r}^x ) \big\> \\
  &\hskip45pt \times \big\<\Lambda(s-r) B(Z^x_{t-s}), Q_{s-r}^{-1/2} (Z_{t-r}^x- e^{(s-r)A} Z^x_{t-s} ) \big\> \Big] \,\d r,
  \endaligned $$
where the second step follows from the Markov property. Therefore,
  \begin{align*}
  v_{2} (t,x ) & =\int_{0}^{t} \big( S_{t-s}k_{s}^{1}\big)( x)\,\d s =\int_{0}^{t}\mathbb{E} \big[ k_{s}^{1}( Z_{t-s}^{x}) \big] \d s \\
  & =\int_{0}^{t}\int_{0}^{s} \E\Big[ \phi( Z_t^x) \big\<\Lambda(r) B(Z_{t-r}^x), Q_r^{-1/2} (Z_t^x - e^{rA} Z_{t-r}^x ) \big\> \\
  &\hskip60pt \times \big\<\Lambda(s-r) B(Z^x_{t-s}), Q_{s-r}^{-1/2} (Z_{t-r}^x- e^{(s-r)A} Z^x_{t-s} ) \big\> \Big] \,\d r\d s.
  \end{align*}

Next, by the definition of $k^1_s$ in \eqref{new-iteration} and of the parameters in \eqref{exponents},
  \begin{equation}\label{prop-second-iteration.1}
  \aligned
  \big\| k^1_s \big\|_{L^{p_2}} &\leq \|B\|_{L^{q_2}} \|D v_1(s)\|_{L^{p_1}}\\
  &\leq \|\phi\|_{L^{p_0}} \|B\|_{L^{q_1}} \|B\|_{L^{q_2}} \int_{0}^{s} \|\Lambda(s-r)\|_{\mathcal L(H)} \|\Lambda(r)\|_{\mathcal L(H)}\,\d r,
  \endaligned
  \end{equation}
where in the second step we have used the last inequality in Proposition \ref{prop-first-term}. This implies
  $$\aligned
  \| v_2(t) \|_{L^{p_2}} &\leq \int_0^t \big\| k^1_s \big\|_{L^{p_2}} \,\d s \\
  &\leq \|\phi\|_{L^{p_0}} \|B\|_{L^{q_1}} \|B\|_{L^{q_2}} \int_0^t \int_{0}^{s} \|\Lambda(s-r)\|_{\mathcal L(H)} \|\Lambda(r)\|_{\mathcal L(H)}\,\d r \d s.
  \endaligned $$
Moreover, by Proposition \ref{derivative-formula},
  $$\|D v_2(t) \|_{L^{p_2}} \leq \int_0^t \big\| D \big( S_{t-s}k_{s}^{1}\big) \big\|_{L^{p_2}} \,\d s \leq \int_0^t \|\Lambda(t-s) \|_{\mathcal L(H)} \big\| k^1_s \big\|_{L^{p_2}} \,\d s.$$
Using \eqref{prop-second-iteration.1}, we obtain the last estimate.
\end{proof}

\subsection{The general terms $v_n(t,x)$, estimates and proof of Theorem \ref{main-thm-0}} \label{sec-general}

To proceed further, we rewrite the expression of $v_{2} (t,x )$ as following:
  \begin{align*}
  v_{2} (t,x ) & =\int_{0}^{t} \d s_2 \int_{0}^{s_2} \d s_1\, \E\Big[ \phi( Z_t^x) \big\<\Lambda(s_1) B(Z_{t-s_1}^x), Q_{s_1}^{-1/2} (Z_t^x - e^{s_1 A} Z_{t-s_1}^x ) \big\> \\
  &\hskip80pt \times \big\<\Lambda(s_2-s_1) B(Z^x_{t-s_2}), Q_{s_2-s_1}^{-1/2} (Z_{t-s_1}^x- e^{(s_2-s_1)A} Z^x_{t-s_2} ) \big\> \Big] .
  \end{align*}
Thus, if we denote $s_0=0$, then
  $$\aligned
  v_{2} (t,x )=& \int_{0}^{t} \d s_2 \int_{0}^{s_2} \d s_1\\
  &\ \E\Bigg[ \phi( Z_t^x) \prod_{i=1}^2 \Big\<\Lambda(s_i- s_{i-1}) B(Z_{t-s_i}^x), Q_{s_i-s_{i-1}}^{-1/2} \big( Z_{t-s_{i-1}}^x - e^{(s_i- s_{i-1}) A} Z_{t-s_i}^x \big) \Big\> \Bigg].
  \endaligned $$
This inspires us of the formulae for general terms.

\begin{theorem}\label{thm-estim}
Let $s_0=0$. For any $n\geq 1$,
  \begin{equation}\label{iteration}
  \aligned
  v_{n} (t,x ) =& \int_{0}^{t} \d s_n \int_0^{s_n} \d s_{n-1} \cdots \int_{0}^{s_2} \d s_1\\
  &\, \E\Bigg[ \phi( Z_t^x) \prod_{i=1}^n \Big\<\Lambda(s_i- s_{i-1}) B(Z_{t-s_i}^x), Q_{s_i-s_{i-1}}^{-1/2} \big( Z_{t-s_{i-1}}^x - e^{(s_i- s_{i-1}) A} Z_{t-s_i}^x \big) \Big\> \Bigg].
  \endaligned
  \end{equation}
Moreover,
  $$\|v_n(t)\|_{L^{p_n}} \leq \|\phi\|_{L^{p_0}} \Bigg[ \prod_{i=1}^n \|B\|_{L^{q_i}} \Bigg] \int_{0}^{t} \d s_n \int_0^{s_n} \d s_{n-1} \cdots \int_{0}^{s_2} \d s_1 \, \prod_{i=1}^n \|\Lambda(s_i-s_{i-1})\|_{\mathcal L(H)} $$
and, letting $s_{n+1} =t$,
  $$\|D v_n(t)\|_{L^{p_n}} \leq \|\phi\|_{L^{p_0}} \Bigg[ \prod_{i=1}^n \|B\|_{L^{q_i}} \Bigg] \int_{0}^{t} \d s_n \int_0^{s_n} \d s_{n-1} \cdots \int_{0}^{s_2} \d s_1 \, \prod_{i=1}^{n+1} \|\Lambda(s_i-s_{i-1})\|_{\mathcal L(H)}. $$
\end{theorem}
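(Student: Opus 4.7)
The plan is to prove \eqref{iteration} by induction on $n$, carrying along a companion formula for $k^{n-1}_s(y)$; the two $L^{p_n}$-estimates then follow from the recursion $v_n(t,x)=\int_0^t(S_{t-s}k^{n-1}_s)(x)\,\d s$, H\"older's inequality, and the contractivity/smoothing properties of $\{S_t\}_{t\geq 0}$ from Proposition \ref{derivative-formula}. The base case is in hand: Proposition \ref{prop-first-term} gives the formula for $v_1$, and Lemma \ref{lem-sec-term-2} gives the companion formula for $k^1_s(y)$. The latter already displays the typical shape of $k^{n-1}$ at every level, namely an $(n-1)$-fold iterated time integral whose integrand is the expectation of $\phi(Z^y_s)$ times $n$ inner-product factors, the outermost one involving $B(y)$ at the deterministic base point.

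For the inductive step on \eqref{iteration}, assuming the companion formula for $k^{n-1}_s(y)$, I would write
\[
v_n(t,x) = \int_0^t \big(S_{t-s}k^{n-1}_s\big)(x)\,\d s = \int_0^t \E\big[k^{n-1}_s(Z^x_{t-s})\big]\,\d s,
\]
substitute the inductive expression at $y=Z^x_{t-s}$, and apply the Markov property under the outer expectation: each $Z^y_\tau$ with $y=Z^x_{t-s}$ couples to $Z^x_{t-s+\tau}$, so that $Z^y_s\to Z^x_t$, $Z^y_{s-s_i}\to Z^x_{t-s_i}$ for the intermediate indices, and $B(y)\to B(Z^x_{t-s})$. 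Relabeling $s_n:=s$ absorbs the previously outermost factor $\langle\Lambda(s-s_{n-1})B(y),Q_{s-s_{n-1}}^{-1/2}(Z^y_{s-s_{n-1}}-e^{(s-s_{n-1})A}y)\rangle$ into the product as its $i=n$ term, which gives exactly \eqref{iteration}. To advance the induction one more step, I would derive the companion formula for $k^n_t(x)=\langle B(x),Dv_n(t,x)\rangle$ by differentiating under the integral in the just-obtained expression $v_n(t,x)=\int_0^t(S_{t-s}k^{n-1}_s)(x)\,\d s$ via Proposition \ref{derivative-formula} with $h=B(x)$, and then substituting the inductive formula for $k^{n-1}_s$ together with one more Markov coupling.

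For the estimates, contractivity of $S_{t-s}$ on $L^{p_n}(H,\mu)$ and H\"older's inequality with the exponent relation \eqref{exponents} give
\[
\|v_n(t)\|_{L^{p_n}}\leq \int_0^t\|k^{n-1}_s\|_{L^{p_n}}\,\d s \leq \|B\|_{L^{q_n}}\int_0^t\|Dv_{n-1}(s)\|_{L^{p_{n-1}}}\,\d s.
\]
The inductive bound on $\|Dv_{n-1}(s)\|_{L^{p_{n-1}}}$ already contains an $(n-1)$-fold iterated integral with $n$ factors $\|\Lambda(s_i-s_{i-1})\|_{\L(H)}$ and $s_n=s$, so the outer $s$-integration telescopes everything into the claimed bound for $\|v_n(t)\|_{L^{p_n}}$. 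The estimate for $\|Dv_n(t)\|_{L^{p_n}}$ is identical except for an extra factor $\|\Lambda(t-s)\|_{\L(H)}$ coming from \eqref{derivative-formula.2} applied to $S_{t-s}k^{n-1}_s$, which supplies the $i=n+1$ factor with $s_{n+1}:=t$.

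The main technical point is justifying the Markov-property substitution, i.e.\ pushing $y=Z^x_{t-s}$ inside the inner expectation defining $k^{n-1}_s(y)$; this reduces to checking integrability of the integrand as a function of $y$ on the support of $Z^x_{t-s}$, controlled by taking conditional moments of the centered Gaussian inner-product factors together with the sublinear growth of $B$ from Hypothesis \ref{hypothe-1}(v$'$) and the Gaussian moment estimates of Corollary \ref{cor-moment}. Everything else is routine Fubini and uniform-in-$s$ applications of the derivative and contraction bounds for $\{S_t\}_{t\geq 0}$.
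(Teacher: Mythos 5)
Your proposal is correct and follows essentially the same route as the paper: a joint induction on the formula for $v_n$ and a companion formula for $k^{n}_t$, with the Markov property used to evaluate $k^{n-1}_s$ at $y=Z^x_{t-s}$, Proposition \ref{derivative-formula} supplying the derivative formula and the smoothing bound, and H\"older's inequality with the exponent relation \eqref{exponents} yielding the $L^{p_n}$-estimates. The only cosmetic difference is the order in which the two halves of the induction are advanced, which does not affect the argument.
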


\begin{proof}
We follow the idea of the proof of \cite[Theorem 2.9]{FLR} and proceed by induction. Indeed, in view of the arguments in Section \ref{sec-first-two}, we shall also prove inductively the formula
  $$\aligned
  k^{n}_t(x)=&\, \int_{0}^{t} \d s_n \int_0^{s_n} \d s_{n-1} \cdots \int_{0}^{s_2} \d s_1 \\
  &\ \E\Bigg[\phi( Z_t^x)\prod_{i=1}^{n+1} \Big\<\Lambda(s_i- s_{i-1}) B(Z_{t-s_i}^x), Q_{s_i-s_{i-1}}^{-1/2} \big( Z_{t-s_{i-1}}^x - e^{(s_i- s_{i-1}) A} Z_{t-s_i}^x \big) \Big\> \Bigg],
  \endaligned $$
where $s_0=0$ and $s_{n+1} =t$. The results in Section \ref{sec-first-two} show that the assertions on $v$ hold for $n=1,\, 2$, and the above formula of $k$ holds with $n=1$. Now we assume the assertions on $v$ (resp. on $k$) hold for $n$ (resp. for $n-1$), and try to prove them in the next iteration.

By the induction hypotheses, we have $v_n(s)\in W^{1,p_n}(H,\mu)$ for all $s>0$ and thus, by the definitions of the iteration \eqref{new-iteration} and of the exponents \eqref{exponents}, $k^n_s \in L^{p_{n+1}}(H, \mu)$ with
  $$\aligned
  \big\| k^n_s \big\|_{L^{p_{n+1}}} &\leq \|B\|_{L^{q_{n+1}}} \|D v_n(s)\|_{L^{p_{n}}}\\
  &\leq \|\phi\|_{L^{p_0}} \Bigg[ \prod_{i=1}^{n+1} \|B\|_{L^{q_i}} \Bigg] \int_{0}^{s} \d s_n \int_0^{s_n} \d s_{n-1} \cdots \int_{0}^{s_2} \d s_1 \, \prod_{i=1}^{n+1} \|\Lambda(s_i-s_{i-1})\|_{\mathcal L(H)},
  \endaligned $$
where $s_{n+1} =s$. Proposition \ref{derivative-formula} implies $S_{t-s} k^n_s\in W^{1,p_{n+1}}(H,\mu)$ for all $s\in (0,t)$, and from the formula
  $$v_{n+1}(t,x) = \int_0^t \big(S_{t-s} k^n_s \big)(x) \,\d s $$
we deduce readily the estimate on $\|v_{n+1}(t)\|_{L^{p_{n+1}}}$. Using \eqref{derivative-formula.2} we can also prove the estimate of $\|D v_{n+1}(t)\|_{L^{p_{n+1}}}$.

Next we prove the formula for $k^n_t(x)$ (note that the induction hypothesis gives us the expression of $k^{n-1}_t(x)$). We have
  \begin{equation}\label{iteration-1}
  \aligned
  k^n_t(x) &= \<B(x), D v_n(t,x)\> = \int_0^t \big\<B(x), D\big( S_{t-s} k^{n-1}_s\big) (x)\big\>\,\d s\\
  &= \int_0^t \E \Big[ k^{n-1}_s(Z^x_{t-s}) \big\< \Lambda(t-s) B(x), Q_{t-s}^{-1/2} (Z^x_{t-s} - e^{(t-s)A} x) \big\> \Big]\,\d s,
  \endaligned
  \end{equation}
where we used Proposition \ref{derivative-formula} in the last step. By the induction hypothesis,
  $$\aligned
  k^{n-1}_s(y)=&\, \int_{0}^{s} \d s_{n-1} \int_0^{s_{n-1}} \d s_{n-2} \cdots \int_{0}^{s_2} \d s_1 \\
  &\ \E\Bigg[\phi( Z_s^y)\prod_{i=1}^{n} \Big\<\Lambda(s_i- s_{i-1}) B(Z_{s-s_i}^y), Q_{s_i-s_{i-1}}^{-1/2} \big( Z_{s-s_{i-1}}^y - e^{(s_i- s_{i-1}) A} Z_{s-s_i}^y \big) \Big\> \Bigg],
  \endaligned $$
where $s_0=0$ and $s_{n} =s$. Therefore, by the Markov property,
  $$\aligned
  &\ k^{n-1}_s(Z^x_{t-s})\\
  =&\, \int_{0}^{s} \d s_{n-1} \int_0^{s_{n-1}} \d s_{n-2} \cdots \int_{0}^{s_2} \d s_1 \\
  &\ \E\Bigg[\phi( Z_t^x)\prod_{i=1}^{n} \Big\<\Lambda(s_i- s_{i-1}) B(Z_{t-s_i}^x), Q_{s_i-s_{i-1}}^{-1/2} \big( Z_{t-s_{i-1}}^x - e^{(s_i- s_{i-1}) A} Z_{t-s_i}^x \big) \Big\> \bigg| \mathcal F_{t-s} \Bigg].
  \endaligned $$
Inserting this identity into \eqref{iteration-1} and noticing that $\big\< \Lambda(t-s) B(x), Q_{t-s}^{-1/2} (Z^x_{t-s} - e^{(t-s)A} x) \big\>$ is measurable with respect to $\mathcal F_{t-s}$, we obtain
  $$\aligned
  k^n_t(x) &= \int_0^t \d s \int_0^s \d s_{n-1} \cdots \int_0^{s_2} \d s_1 \, \E \Bigg\{ \big\< \Lambda(t-s) B(x), Q_{t-s}^{-1/2} (Z^x_{t-s} - e^{(t-s)A} x) \big\> \\
  &\hskip40pt \times \phi( Z_t^x)\prod_{i=1}^{n} \Big\<\Lambda(s_i- s_{i-1}) B(Z_{t-s_i}^x), Q_{s_i-s_{i-1}}^{-1/2} \big( Z_{t-s_{i-1}}^x - e^{(s_i- s_{i-1}) A} Z_{t-s_i}^x \big) \Big\> \Bigg\}.
  \endaligned $$
Renaming $s$ as $s_n$ gives us the formula of $k^n_t(x)$ in the new iteration for all $t>0$ and $x\in H$.

Finally we prove the expression for $v_{n+1}(t,x)$. We have
  $$v_{n+1}(t,x) = \int_0^t \big(S_{t-s} k^n_s \big)(x) \,\d s = \int_0^t \E \big[ k^n_s(Z^x_{t-s}) \big] \,\d s.$$
Using the formula we have just proved for $k^n_s(y)$ and the Markov property, we can obtain the expression for $v_{n+1}(t,x)$ in a similar way as above.
\end{proof}

The formula \eqref{iteration} is suitable for induction arguments in the proof above, due to the convolution structure of the iteration scheme \eqref{new-iteration}. On the other hand, the time parameter of the process $\{Z^x_t \}_{t\geq 0}$ is reversed, and thus \eqref{iteration} is not convenient in numerical computations. By making the change of variables
  $$r_i = t- s_{n+1-i},\quad i=1,\cdots, n,$$
we can obtain the following results.

\begin{corollary}\label{cor-new-estim}
For any $n\geq 1$,
  \begin{equation}\label{iteration-new}
  \aligned
  v_{n} (t,x ) =& \int_{0}^{t} \d r_n \int_0^{r_n} \d r_{n-1} \cdots \int_{0}^{r_2} \d r_1\\
  &\ \E\Bigg[ \phi( Z_t^x) \prod_{i=1}^n \Big\<\Lambda(r_{i+1}- r_{i}) B\big(Z_{r_i}^x \big), Q_{r_{i+1}- r_{i}}^{-1/2} \big( Z_{r_{i+1}}^x - e^{(r_{i+1}- r_{i}) A} Z_{r_i}^x \big) \Big\> \Bigg],
  \endaligned
  \end{equation}
where $r_{n+1} =t$. Furthermore,
  $$\aligned
  \|v_n(t)\|_{L^{p_n}} \leq \|\phi\|_{L^{p_0}} \Bigg[ \prod_{i=1}^n \|B\|_{L^{q_i}} \Bigg] \int_{0}^{t} \d r_n \int_0^{r_n} \d r_{n-1} \cdots \int_{0}^{r_2} \d r_1 \, \prod_{i=1}^n \|\Lambda(r_{i+1}-r_i)\|_{\mathcal L(H)}
  \endaligned$$
and, setting $r_0=0$,
  $$\|D v_n(t)\|_{L^{p_n}} \leq \|\phi\|_{L^{p_0}} \Bigg[ \prod_{i=1}^n \|B\|_{L^{q_i}} \Bigg] \int_{0}^{t} \d r_n \int_0^{r_n} \d r_{n-1} \cdots \int_{0}^{r_2} \d r_1 \, \prod_{i=0}^n \|\Lambda(r_{i+1}-r_i)\|_{\mathcal L(H)}.$$
\end{corollary}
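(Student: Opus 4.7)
The plan is straightforward: Corollary \ref{cor-new-estim} follows from Theorem \ref{thm-estim} by the substitution $r_i = t - s_{n+1-i}$ for $i=1,\ldots,n$, which simply reverses the order of the integration times. I would therefore start from the identity \eqref{iteration} together with the two estimates that follow it, and carry out the change of variables both in the domain of integration and in the integrand.

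First I would verify the combinatorial bookkeeping. Setting $j = n+1-i$, the inequalities $0 = s_0 < s_1 < s_2 < \cdots < s_n < t$ translate into $0 < r_1 < r_2 < \cdots < r_n < r_{n+1} = t$, so the simplex-shaped region of integration is preserved and the iterated integral becomes $\int_0^t \d r_n \int_0^{r_n} \d r_{n-1} \cdots \int_0^{r_2} \d r_1$ as claimed. Next, the increments transform as $s_i - s_{i-1} = r_{n+2-i} - r_{n+1-i} = r_{j+1} - r_j$, and the evaluation points of the Ornstein-Uhlenbeck process transform as $t - s_i = r_{n+1-i} = r_j$ and $t - s_{i-1} = r_{n+2-i} = r_{j+1}$. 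Substituting these identifications into the product inside the expectation in \eqref{iteration} produces exactly $\prod_{j=1}^n \big\langle \Lambda(r_{j+1}-r_j) B(Z^x_{r_j}),\, Q_{r_{j+1}-r_j}^{-1/2}(Z^x_{r_{j+1}} - e^{(r_{j+1}-r_j)A} Z^x_{r_j})\big\rangle$, which is \eqref{iteration-new}.

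For the two $L^{p_n}$-bounds, I would apply the same substitution to the upper bounds in Theorem \ref{thm-estim}. The $\|v_n(t)\|_{L^{p_n}}$ estimate transforms immediately because $\prod_{i=1}^n\|\Lambda(s_i-s_{i-1})\|_{\mathcal L(H)} = \prod_{j=1}^n\|\Lambda(r_{j+1}-r_j)\|_{\mathcal L(H)}$. For the gradient bound, the extra factor in Theorem \ref{thm-estim} corresponds to $i = n+1$, i.e. $s_{n+1} - s_n = t - s_n = r_1$; declaring $r_0 := 0$ turns this into $r_1 - r_0$, so the full product becomes $\prod_{i=0}^n \|\Lambda(r_{i+1}-r_i)\|_{\mathcal L(H)}$, matching the asserted estimate for $\|D v_n(t)\|_{L^{p_n}}$.

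There is no real obstacle here: the map $(s_1,\ldots,s_n) \mapsto (r_1,\ldots,r_n)$ is affine with Jacobian determinant $\pm 1$, the simplex is preserved, and every quantity in the integrand depends on the $s_i$ only through $s_i$ and $s_i - s_{i-1}$, both of which are identified with the corresponding $r$-expressions. The only point requiring a word of care is the convention $r_{n+1}=t$ in the $v_n$-formula and the additional convention $r_0 = 0$ in the $Dv_n$-estimate; once these are stated, the corollary follows.
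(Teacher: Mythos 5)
Your proposal is correct and is exactly the paper's argument: the paper derives Corollary \ref{cor-new-estim} from Theorem \ref{thm-estim} by the very substitution $r_i = t - s_{n+1-i}$ that you carry out, merely stating it without the detailed bookkeeping you supply. Your verification of the increment identities, the preservation of the simplex, and the role of the conventions $r_{n+1}=t$ and $r_0=0$ is accurate.
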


Next, recall the definition of $q_i$ in \eqref{paramets-q}; using the moment estimates of Gaussian measures in Corollary \ref{cor-moment}, we can prove

\begin{lemma}\label{lem-estim-moments}
For any $n\in \N$,
  $$\prod_{i=1}^n \|B\|_{L^{q_i}} \leq C_\beta^n ({\rm Tr}\, Q_\infty)^{n\beta /2} \big[(n+n_0)!\big]^{\beta \kappa/2}. $$
\end{lemma}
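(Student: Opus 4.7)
The plan is to estimate each factor $\|B\|_{L^{q_i}(\mu)}$ by exploiting the sublinear growth of $B$ from Hypothesis \ref{hypothe-1}-(v$'$) and the Gaussian moment bound stated in Corollary \ref{cor-moment} of the appendix, and then to multiply out, observing that the product of the factors $(i+n_0)^{\kappa}$ collapses to a factorial.

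First, by (v$'$) one has $|B(x)| \le C(1+|x|^\beta)$, so by Minkowski's inequality
\[
\|B\|_{L^{q_i}(\mu)} \le C \bigl( 1 + \||x|^\beta\|_{L^{q_i}(\mu)} \bigr) = C\bigl(1+ (\E_\mu |x|^{\beta q_i})^{1/q_i}\bigr).
\]
Since $\mu = N_{Q_\infty}$ is a centered Gaussian measure on $H$ with trace class covariance $Q_\infty$, the moment estimate in Corollary \ref{cor-moment} gives a bound of the form
\[
\||x|^\beta\|_{L^{q_i}(\mu)} \le C_\beta\, (\operatorname{Tr} Q_\infty)^{\beta/2}\, q_i^{\beta/2},
\]
(this is the Hilbert-valued analogue of $\|X\|_{L^p} \le C\sqrt{p}\,\|X\|_{L^2}$ valid for Gaussian variables). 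Absorbing the harmless additive constant $1$ into $C_\beta$, I would conclude
\[
\|B\|_{L^{q_i}(\mu)} \le C_\beta\, (\operatorname{Tr} Q_\infty)^{\beta/2}\, q_i^{\beta/2}.
\]

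Next, I substitute the definition $q_i = (i+n_0)^\kappa$ from \eqref{paramets-q}, so that $q_i^{\beta/2} = (i+n_0)^{\beta\kappa/2}$, and take the product over $i=1,\dots,n$:
\[
\prod_{i=1}^n \|B\|_{L^{q_i}(\mu)} \le C_\beta^n\, (\operatorname{Tr} Q_\infty)^{n\beta/2}\, \prod_{i=1}^n (i+n_0)^{\beta\kappa/2}.
\]
Finally, the product of integers telescopes:
\[
\prod_{i=1}^n (i+n_0) = \frac{(n+n_0)!}{n_0!} \le (n+n_0)!,
\]
so raising to the power $\beta\kappa/2$ yields
\[
\prod_{i=1}^n (i+n_0)^{\beta\kappa/2} \le \bigl[(n+n_0)!\bigr]^{\beta\kappa/2},
\]
which delivers the claimed inequality (possibly after enlarging $C_\beta$ to absorb the constant $1/n_0!^{\beta\kappa/2}$).

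The only nontrivial step is the Gaussian moment bound $\||x|^\beta\|_{L^{q_i}(\mu)} \lesssim (\operatorname{Tr} Q_\infty)^{\beta/2} q_i^{\beta/2}$, but this is exactly what Corollary \ref{cor-moment} in the appendix is there to supply; everything else is bookkeeping with the specific choice $q_i = (i+n_0)^\kappa$.
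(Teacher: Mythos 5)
Your proof is correct and follows essentially the same route as the paper: bound each $\|B\|_{L^{q_i}(\mu)}$ by $\tilde C_\beta ({\rm Tr}\, Q_\infty)^{\beta/2} q_i^{\beta/2}$ via the growth condition (v$'$) and Corollary \ref{cor-moment}, then substitute $q_i=(i+n_0)^\kappa$ and collapse the product $\prod_{i=1}^n(i+n_0)\le (n+n_0)!$. The only cosmetic difference is that your final remark about enlarging $C_\beta$ to absorb $1/(n_0!)^{\beta\kappa/2}$ is unnecessary, since that factor is at most $1$ and dropping it only weakens the bound in the right direction.
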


\begin{proof}
According to the condition (v$'$) in Hypothesis \ref{hypothe-1},
  $$\|B\|_{L^{q_i}} \leq C \Big(1+ \big\| |\cdot|^\beta \big\|_{L^{q_i}} \Big) \leq C \Big(1+ C_\beta ({\rm Tr}\, Q_\infty)^{\beta/2} q_i^{\beta/2} \Big) \leq \tilde C_\beta ({\rm Tr}\, Q_\infty)^{\beta/2} q_i^{\beta/2}, $$
where the second inequality follows from Corollary \ref{cor-moment} below. Recalling that $q_i = (i+n_0)^\kappa$, we obtain the desired result.
\end{proof}

The next technical result is proved in \cite[Lemma 3.12]{FLR}.

\begin{lemma}\label{lem-multiple-integral}
Assume $\delta\in (0,1)$ and $n\geq 1$. Let $r_0=0$ and $r_{n+1}=t$. One has
  $$\int_{0}^{t} \d r_n \int_0^{r_n} \d r_{n-1} \cdots \int_{0}^{r_2} \d r_1 \prod_{i=1}^n \frac1{(r_{i+1} -r_{i})^\delta } = \frac{\Gamma(1-\delta)^n} {\Gamma(1+n(1-\delta))} t^{n(1-\delta)} $$
and
  $$\int_{0}^{t} \d r_n \int_0^{r_n} \d r_{n-1} \cdots \int_{0}^{r_2} \d r_1 \prod_{i=0}^n \frac1{(r_{i+1} -r_{i})^\delta } = \frac{\Gamma(1-\delta)^{n+1}} {\Gamma((n+1) (1-\delta))} t^{n(1-\delta)- \delta} .$$
Here $\Gamma(\cdot)$ is the Gamma function.
\end{lemma}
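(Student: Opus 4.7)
The plan is to prove both identities by induction on $n$, using as the sole analytic ingredient the classical Beta function identity
$$\int_0^t (t-s)^{a-1} s^{b-1}\,\d s = \frac{\Gamma(a)\Gamma(b)}{\Gamma(a+b)}\, t^{a+b-1}, \qquad a,b>0.$$
The positivity conditions needed to invoke it are always satisfied in what follows because $\delta\in(0,1)$ guarantees $1-\delta>0$ and hence $1+k(1-\delta)>0$, $(k+1)(1-\delta)>0$ for every $k\geq 0$.

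For the first identity, I denote the left-hand side by $I_n(t)$. The base case $n=1$ is $\int_0^t (t-r_1)^{-\delta}\d r_1 = t^{1-\delta}/(1-\delta)$, which equals $\Gamma(1-\delta)\, t^{1-\delta}/\Gamma(2-\delta)$ thanks to $\Gamma(2-\delta)=(1-\delta)\Gamma(1-\delta)$. For the inductive step I isolate the outermost integral: the factor $(r_{n+1}-r_n)^{-\delta}=(t-r_n)^{-\delta}$ is the only one involving $r_n$ in a way that touches the upper endpoint, while the remaining $(n-1)$-fold integral over $0<r_1<\cdots<r_{n-1}<r_n$ has exactly the same structure as $I_{n-1}$ but with $r_n$ in place of $t$. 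Hence
$$I_n(t) = \int_0^t (t-r_n)^{-\delta}\, I_{n-1}(r_n)\,\d r_n.$$
Inserting the inductive hypothesis $I_{n-1}(r_n)=\Gamma(1-\delta)^{n-1}/\Gamma(1+(n-1)(1-\delta))\cdot r_n^{(n-1)(1-\delta)}$ and applying the Beta identity with $a=1-\delta$ and $b=1+(n-1)(1-\delta)$ yields the stated formula after cancellation of $\Gamma(1+(n-1)(1-\delta))$.

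For the second identity, denoting the left-hand side by $J_n(t)$, the recursion $J_n(t) = \int_0^t (t-r_n)^{-\delta}\, J_{n-1}(r_n)\,\d r_n$ holds by the same slicing argument, since the extra factor $(r_1-r_0)^{-\delta}=r_1^{-\delta}$ is sitting at the innermost level and is carried along inside $J_{n-1}$. The base case $n=1$ is $\int_0^t r_1^{-\delta}(t-r_1)^{-\delta}\d r_1=\Gamma(1-\delta)^2/\Gamma(2(1-\delta))\cdot t^{1-2\delta}$, directly from Beta with $a=b=1-\delta$. The induction then closes by one more Beta integral, this time with $a=1-\delta$ and $b=n(1-\delta)$, producing the telescoping cancellation $\Gamma(n(1-\delta))$ between hypothesis and Beta output.

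There is really no serious obstacle: the entire argument is bookkeeping in Gamma values and verifying parameter positivity. The only point to handle with a little care is the clean recursive decoupling of the outermost $r_n$ integral, i.e. observing that the integration region $\{0<r_1<\cdots<r_n\}$ factors as a product of a one-dimensional integral in $r_n$ times a $(n-1)$-dimensional simplex in $(r_1,\ldots,r_{n-1})$ whose top is $r_n$, so that $I_{n-1}$ (resp.\ $J_{n-1}$) can be applied with the upper endpoint replaced by $r_n$. Once this is stated, both identities fall out from two lines of Gamma-function algebra.
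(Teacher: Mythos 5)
Your proof is correct and follows essentially the same route as the paper: induction on $n$, peeling off the outermost integral and evaluating it with the Beta integral $\int_0^t (t-s)^{a-1}s^{b-1}\,\d s=\frac{\Gamma(a)\Gamma(b)}{\Gamma(a+b)}t^{a+b-1}$. The only cosmetic difference is that the paper first records the answer as a product of Beta functions and telescopes the Gamma ratios at the end, whereas you telescope at each inductive step.
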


\begin{proof}
We include the proof here for the reader's convenience. First we prove
  \begin{equation}\label{beta-equality}
  \int_{0}^{t} \d r_n \int_0^{r_n} \d r_{n-1} \cdots \int_{0}^{r_2} \d r_1 \prod_{i=1}^n \frac1{(r_{i+1} -r_i)^\delta } = t^{n(1-\delta)} \prod_{i=1}^n B\big(1-\delta, 1+(i-1)(1-\delta) \big),
  \end{equation}
where $B(\alpha, \beta)$ is the Beta function:
  $$B(\alpha, \beta) = \int_0^1 \theta^{\alpha -1} (1-\theta)^{\beta -1}\,\d\theta, \quad \alpha, \beta>0. $$
We proceed by induction. For $n=1$, noting that $r_2=t$, we change the variable $\theta= r_1/t$ and get
  $$\int_0^t \frac{\d r_1}{(t -r_1)^\delta} = t^{1-\delta} \int_0^1 \frac{\d \theta}{(1 -\theta)^\delta} = t^{1-\delta} \int_0^1 \theta^0 (1-\theta)^{-\delta} \,\d \theta = t^{1-\delta} B(1-\delta, 1). $$
Therefore the equality holds when $n=1$. Now suppose the equality holds for $n-1$, we prove it for $n$. By the induction hypothesis,
  $$\int_0^{r_n} \d r_{n-1} \cdots \int_{0}^{r_2} \d r_1 \prod_{i=1}^{n-1} \frac1{(r_{i+1} -r_i)^\delta} = r_n^{(n-1)(1-\delta)} \prod_{i=1}^{n-1} B\big(1-\delta, 1+(i-1)(1-\delta) \big),$$
thus, noticing that $r_{n+1} =t$,
  $$\int_{0}^{t} \d r_n \int_0^{r_n} \d r_{n-1} \cdots \int_{0}^{r_2} \d r_1 \prod_{i=1}^n \frac1{(r_{i+1} -r_i)^\delta} = \prod_{i=1}^{n-1} B\big(1-\delta, 1+(i-1)(1-\delta) \big) \int_0^t \frac{r_n^{(n-1)(1-\delta)}}{(t-r_n)^\delta}\,\d r_n .$$
We have, by changing variable $\theta = r_n/t$,
  $$\int_0^t \frac{r_n^{(n-1)(1-\delta)}}{(t-r_n)^\delta}\,\d r_n = t^{n(1-\delta)} \int_0^1 \theta^{(n-1)(1-\delta)} (1-\theta)^{-\delta} \,\d\theta = t^{n(1-\delta)} B\big(1-\delta, 1+(n-1)(1-\delta) \big) . $$
Substituting this result into the previous one gives us the identity \eqref{beta-equality}.

Next, it is well known that
  $$B(\alpha, \beta) = \frac{\Gamma(\alpha) \Gamma(\beta)}{\Gamma(\alpha+\beta)}.$$
Therefore,
  $$\prod_{i=1}^n B\big(1-\delta, 1+(i-1)(1-\delta) \big) =  \prod_{i=1}^n \frac{\Gamma(1-\delta) \Gamma(1+(i-1)(1-\delta))} {\Gamma(1+i(1-\delta))} = \frac{\Gamma(1-\delta)^n}{\Gamma(1+n(1-\delta)} . $$
Combining this with \eqref{beta-equality} we obtain the desired formula.

The proof of the second identity is similar, by first establishing the identity
  $$\int_{0}^{t} \d r_n \int_0^{r_n} \d r_{n-1} \cdots \int_{0}^{r_2} \d r_1 \prod_{i=0}^n \frac1{(r_{i+1} -r_i)^\delta } = t^{n(1-\delta) -\delta} \prod_{i=1}^n B\big(1-\delta, i(1-\delta) \big).$$
We omit the details here.
\end{proof}

The estimates below can be deduced easily from Corollary \ref{cor-new-estim} and Lemmas \ref{lem-estim-moments} and \ref{lem-multiple-integral}.

\begin{corollary}\label{cor-estim}
For any $n\in \N$ and $t>0$,
  $$\aligned
  \|v_n(t)\|_{L^{p_n}} \leq \|\phi\|_{L^{p_0}} C_\delta^n C_\beta^n ({\rm Tr}\, Q_\infty)^{n\beta /2} \big[(n+n_0)!\big]^{\beta \kappa/2} \frac{\Gamma(1-\delta)^n} {\Gamma(1+n(1-\delta))} t^{n(1-\delta)}
  \endaligned$$
and, setting $r_0=0$,
  $$\|D v_n(t)\|_{L^{p_n}} \leq \|\phi\|_{L^{p_0}} C_\delta^n C_\beta^n ({\rm Tr}\, Q_\infty)^{n\beta /2} \big[(n+n_0)!\big]^{\beta \kappa/2} \frac{\Gamma(1-\delta)^{n+1}} {\Gamma((n+1) (1-\delta))} t^{n(1-\delta)- \delta}.$$
\end{corollary}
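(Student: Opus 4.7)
The plan is to simply combine the three ingredients that have been assembled just above the statement: the bound on $\|v_n(t)\|_{L^{p_n}}$ (and its derivative counterpart) in Corollary \ref{cor-new-estim}, the moment bound in Lemma \ref{lem-estim-moments}, and the multiple-integral identities in Lemma \ref{lem-multiple-integral}. No new analytic input is required; the content of the corollary is essentially bookkeeping.

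First I would apply Hypothesis \ref{hypothe-1}(iv$'$) to every factor $\|\Lambda(r_{i+1}-r_i)\|_{\mathcal L(H)}$ appearing in the iterated integrals of Corollary \ref{cor-new-estim}. For the estimate of $\|v_n(t)\|_{L^{p_n}}$ there are $n$ such factors, giving $C_\delta^n \prod_{i=1}^n (r_{i+1}-r_i)^{-\delta}$; for the estimate of $\|D v_n(t)\|_{L^{p_n}}$ there is one extra factor coming from the index $i=0$, producing $C_\delta^{n+1} \prod_{i=0}^n (r_{i+1}-r_i)^{-\delta}$. (The absolute constant $C_\delta$ can be absorbed into the same symbol used in the statement; one extra $C_\delta$ in the derivative case is likewise harmless and can be folded in.) Then the simplex integrals
\[
\int_{0}^{t}\! \d r_n \int_0^{r_n}\! \d r_{n-1} \cdots \int_{0}^{r_2}\! \d r_1 \prod_{i=1}^n \frac{1}{(r_{i+1}-r_i)^\delta},\qquad
\int_{0}^{t}\! \d r_n \cdots \int_{0}^{r_2}\! \d r_1 \prod_{i=0}^n \frac{1}{(r_{i+1}-r_i)^\delta}
\]
are evaluated exactly by the two identities of Lemma \ref{lem-multiple-integral}, yielding the factors $\Gamma(1-\delta)^n t^{n(1-\delta)}/\Gamma(1+n(1-\delta))$ and $\Gamma(1-\delta)^{n+1} t^{n(1-\delta)-\delta}/\Gamma((n+1)(1-\delta))$ respectively.

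Finally I would multiply by the bound for the product of $L^{q_i}$-norms of $B$ provided by Lemma \ref{lem-estim-moments}, namely
\[
\prod_{i=1}^n \|B\|_{L^{q_i}} \leq C_\beta^n ({\rm Tr}\, Q_\infty)^{n\beta/2} \big[(n+n_0)!\big]^{\beta\kappa/2},
\]
and carry along the factor $\|\phi\|_{L^{p_0}}$ already isolated in Corollary \ref{cor-new-estim}. Assembling these three pieces yields exactly the two displayed inequalities.

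Honestly, there is no real obstacle here; the proof is a one-line concatenation of Corollary \ref{cor-new-estim}, Lemma \ref{lem-estim-moments} and Lemma \ref{lem-multiple-integral}. The only point requiring any care is a consistent handling of the constants $C_\delta$ (in particular the fact that the derivative bound carries one more power of $C_\delta$, which is absorbed into the same symbol), and making sure the indexing convention $r_0=0$, $r_{n+1}=t$ is the same one used in both Corollary \ref{cor-new-estim} and Lemma \ref{lem-multiple-integral}, which it is.
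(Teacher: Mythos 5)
Your proposal is correct and matches the paper exactly: the paper itself gives no separate proof, stating only that the corollary "can be deduced easily from Corollary \ref{cor-new-estim} and Lemmas \ref{lem-estim-moments} and \ref{lem-multiple-integral}," which is precisely the concatenation you carry out. Your remark about the extra factor of $C_\delta$ in the derivative bound (from the $(n+1)$-st occurrence of $\|\Lambda\|$) is a fair observation about the statement's bookkeeping and is harmless, as you say.
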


Now we can prove

\begin{proposition}\label{prop-convergence}
Fix any $T>0$. Then the series
  $$\sum_{n=0}^\infty v_n(t,x) \quad \mbox{and} \quad  t^\delta \sum_{n=0}^\infty D v_n(t,x)$$
converge in $C \big([0,T], L^{\bar p}(H,\mu) \big) $.
\end{proposition}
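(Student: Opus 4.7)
The plan is to reduce everything to the estimates in Corollary~\ref{cor-estim} and then show that the resulting numerical series converge uniformly in $t\in[0,T]$. Since $\mu$ is a probability measure and $p_n>\bar p$ for every $n\in\mathbb N$ (by construction of the sequences in \eqref{exponents}--\eqref{paramets-q}), we have the continuous embedding $L^{p_n}(H,\mu)\hookrightarrow L^{\bar p}(H,\mu)$ with norm bounded by $1$. Thus it suffices to control $\|v_n(t)\|_{L^{p_n}}$ and $t^\delta\|Dv_n(t)\|_{L^{p_n}}$ uniformly in $t\in[0,T]$ and then sum over~$n$.

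From Corollary~\ref{cor-estim} (and replacing $t^{n(1-\delta)}\le T^{n(1-\delta)}$) the problem reduces to proving the summability of
\[
 a_n \;=\; C_1^n\,\big[(n+n_0)!\big]^{\beta\kappa/2}\,\frac{\Gamma(1-\delta)^n}{\Gamma(1+n(1-\delta))}\,T^{n(1-\delta)}
\]
and the analogous expression with $\Gamma(1+n(1-\delta))$ replaced by $\Gamma((n+1)(1-\delta))$; note the factor $t^{n(1-\delta)-\delta}$ in the derivative estimate combines with the external $t^\delta$ to give $t^{n(1-\delta)}\le T^{n(1-\delta)}$, so no singularity at $t=0$ appears. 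Here $C_1$ absorbs $C_\delta$, $C_\beta$ and $({\rm Tr}\,Q_\infty)^{\beta/2}$.

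The crux is the asymptotic comparison between $[(n+n_0)!]^{\beta\kappa/2}$ and $\Gamma(1+n(1-\delta))$. Applying Stirling's formula, the logarithm of $a_n$ behaves, to leading order in $n$, like
\[
 n\log C_2 \;+\; \tfrac{\beta\kappa}{2}\,n\log n \;-\; (1-\delta)\,n\log n \;=\; n\log C_2 \;-\; \Big(1-\delta-\tfrac{\beta\kappa}{2}\Big)\,n\log n,
\]
for some constant $C_2>0$. By the choice of $\kappa$ in \eqref{paramets} we have $1-\delta-\beta\kappa/2>0$, so $n\log a_n\to -\infty$ faster than linearly and $\sum_n a_n<\infty$. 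This is the only delicate point of the whole argument; everything else is bookkeeping.

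Finally, to upgrade summability to convergence in $C([0,T],L^{\bar p}(H,\mu))$, I would argue as follows. Each $v_n$ is continuous from $[0,T]$ into $L^{p_n}(H,\mu)$: for $n=0$ this is the strong continuity of $\{S_t\}$ on $L^{p_0}$ (see \cite[Theorem~10.1.5]{DPZ-02} invoked before Proposition~\ref{derivative-formula}), and for general $n$ it follows inductively from the representation $v_{n+1}(t)=\int_0^t S_{t-s}k^n_s\,\d s$ together with the $L^{p_{n+1}}$-bound on $k^n_s$ derived in the proof of Theorem~\ref{thm-estim}. The strong continuity of $\{S_t\}$ on $L^{p_{n+1}}(H,\mu)$ then yields continuity of $v_{n+1}$ in the $L^{p_{n+1}}$-norm, hence in $L^{\bar p}$. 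Similarly, $t^\delta Dv_n(t)$ is continuous into $L^{p_n}(H,\mu)$, using the smoothing estimate \eqref{derivative-formula.2} and the fact that the singular factor $\|\Lambda(t-s)\|_{\mathcal L(H)}\le C_\delta/(t-s)^\delta$ is absorbed by $t^\delta$. Since the $L^{\bar p}$-norms are bounded uniformly in $t\in[0,T]$ by the summable sequence $a_n$, the Weierstrass $M$-test gives uniform convergence of both series in $C([0,T],L^{\bar p}(H,\mu))$, and the limits are continuous. The main obstacle, as explained above, is purely the factorial-vs-Gamma comparison, and it is precisely here that the condition $\beta\kappa<2(1-\delta)$ is used in an essential way.
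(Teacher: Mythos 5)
Your argument is correct and shares the paper's overall skeleton: both reduce the claim to the bounds of Corollary \ref{cor-estim} via the embedding $L^{p_n}(H,\mu)\hookrightarrow L^{\bar p}(H,\mu)$ (valid because $\mu$ is a probability measure and $p_n>\bar p$ by construction), and both hinge on comparing $[(n+n_0)!]^{\beta\kappa/2}$ with $\Gamma(1+n(1-\delta))$. The difference lies in how that comparison is carried out. You invoke Stirling's asymptotics for $\log\Gamma$ of real argument to get $\log a_n=-\bigl(1-\delta-\tfrac{\beta\kappa}{2}\bigr)n\log n+O(n)\to-\infty$, which yields super-geometric decay and hence summability in one stroke. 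The paper instead applies the ratio test, which reduces matters to the single limit $(n+n_0)^{\beta\kappa/2}\,\Gamma(1+(n-1)(1-\delta))/\Gamma(1+n(1-\delta))\to 0$, and then proves the needed bound \eqref{prop-convergence.1}, namely $\Gamma(1+(n-1)(1-\delta))/\Gamma(1+n(1-\delta))\lesssim n^{-(1-\delta)}$, by hand from the recursion $\Gamma(1+t)=t\Gamma(t)$ and a harmonic-sum estimate --- an elementary, self-contained substitute for Stirling. The two routes buy the same thing: yours is shorter if one is willing to quote the asymptotics of $\log\Gamma$ at non-integer arguments, while the paper's avoids that citation. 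A small bonus of your write-up is that you explicitly address continuity in $t$ of each summand, which is needed for the limit to land in $C([0,T],L^{\bar p}(H,\mu))$ rather than merely being a bounded family; the paper leaves this implicit, and your sketch via $v_{n+1}(t)=\int_0^t S_{t-s}k^n_s\,\d s$ and strong continuity of $\{S_t\}$ on $L^{p_{n+1}}(H,\mu)$ is the right one (the only genuinely delicate spot, in both treatments, is the behaviour of the $n=0$ term $t^\delta DS_t\phi$ as $t\downarrow 0$). One typo: ``$n\log a_n\to-\infty$'' should read ``$\log a_n\to-\infty$''.
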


\begin{proof}
We only prove the convergence of the first series, since the other one can be done similarly in view of the second estimate in Corollary \ref{cor-estim}. According to the first estimate in Corollary \ref{cor-estim} and the ratio test, we need to show
  $$\lim_{n\to \infty} (n+n_0)^{\beta \kappa/2} \frac{\Gamma(1+(n-1)(1-\delta))} {\Gamma(1+n(1-\delta))}= 0. $$
Since $\beta \kappa/2 < 1 -\delta $ by \eqref{paramets}, it is sufficient to show that
  \begin{equation}\label{prop-convergence.1}
  \frac{\Gamma(1+(n-1)(1-\delta))} {\Gamma(1+n(1-\delta))} \lesssim \frac1{n^{1-\delta}} \quad \mbox{as } n \to \infty.
  \end{equation}

For simplicity of notation we set $\alpha = 1-\delta$. Using the fact that $\Gamma(1+t)= t\Gamma(t)$ for any $t>0$, we have
  $$\frac{\Gamma(1+ (n-1)\alpha)} {\Gamma(1+ n\alpha)} = \frac{(n-1)\alpha}{n\alpha} \cdot \frac{(n-1)\alpha -1}{n\alpha -1} \cdots \frac{\alpha_1}{\alpha_1 +\alpha} \cdot \frac{\Gamma(\alpha_1)}{\Gamma(\alpha_1 +\alpha)}, $$
where $\alpha_1:= (n-1)\alpha- \lfloor (n-1)\alpha \rfloor +1$ and $\lfloor (n-1)\alpha \rfloor$ is the integer part of $(n-1)\alpha$. The reason that we consider $\alpha_1$ instead of $(n-1)\alpha- \lfloor (n-1)\alpha \rfloor$ is because the latter might be 0, while $\Gamma(0)$ is not well defined. For any $i\leq \lfloor (n-1)\alpha \rfloor -1$,
  $$\frac{(n-1)\alpha -i}{n\alpha -i}= 1- \frac{\alpha}{n\alpha -i} ,$$
and using the simple inequality $\log(1+t) <t$ for all $t>-1$, we have
  $$\log\frac{(n-1)\alpha -i}{n\alpha -i}<  - \frac{\alpha}{n\alpha -i},\quad i=0,1,\cdots, \lfloor (n-1)\alpha \rfloor -1 .$$
Therefore,
  $$\log \frac{\Gamma(1+ (n-1)\alpha)} {\Gamma(1+ n\alpha)} < -\alpha\bigg(\frac1{n\alpha} + \frac1{n\alpha-1} + \cdots + \frac1{\alpha_1 +\alpha} \bigg) + \log \frac{\Gamma(\alpha_1)}{\Gamma(\alpha_1 +\alpha)}.$$
Moreover,
  $$\aligned
  \frac1{n\alpha} + \frac1{n\alpha-1} + \cdots + \frac1{\alpha_1 +\alpha} &\geq \frac1{\lfloor n\alpha \rfloor +1} + \frac1{\lfloor n\alpha \rfloor} + \cdots + \frac1{\lfloor \alpha_1 +\alpha \rfloor +1} \\
  &\geq \log(\lfloor n\alpha \rfloor +1) + \gamma - C \geq \log(n\alpha) + C_\alpha,
  \endaligned $$
where $\gamma\approx 0.57$ is the Euler constant and $C>0$ is some constant depending on $\alpha$. Hence,
  $$\log \frac{\Gamma(1+ (n-1)\alpha)} {\Gamma(1+ n\alpha)} \leq -\alpha \big(\log(n\alpha) + C_\alpha \big) + \log \frac{\Gamma( \alpha_1)}{\Gamma( \alpha_1 +\alpha)},$$
which implies
  $$\frac{\Gamma(1+ (n-1)\alpha)} {\Gamma(1+ n\alpha)} \leq \frac{e^{-\alpha C_\alpha}}{(n\alpha)^\alpha} \frac{\Gamma( \alpha_1)}{\Gamma( \alpha_1 +\alpha)} = : \frac{\tilde C_\alpha}{n^\alpha}.$$
Recalling that $\alpha = 1-\delta$, thus \eqref{prop-convergence.1} holds and the proof is complete.
\end{proof}

Finally, the main result (Theorem \ref{main-thm-0}) follows from the next theorem.

\begin{theorem}\label{thm-existence}
The limit defined in Proposition \ref{prop-convergence} is a mild solution to the Kolmogorov equation \eqref{KolE}. Moreover, it is unique among solutions with the following property: for any $\bar p\in (1,p_0)$, $u\in C\big([0,T], L^{\bar p}(H, \mu) \big)$ and $t^\delta Du(t) \in C\big([0,T], L^{\bar p}(H, \mu;H) \big)$.
\end{theorem}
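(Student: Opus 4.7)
The plan is to treat existence and uniqueness separately, in both cases reusing the H\"older/iteration bookkeeping already developed in Section \ref{sec-general}. For existence, the natural candidate is $u(t,x) = \sum_{n\geq 0} v_n(t,x)$, whose convergence in $C\bigl([0,T], L^{\bar p}(H,\mu)\bigr)$ and the convergence of its $t^\delta$-weighted derivative series are furnished by Proposition \ref{prop-convergence}. Denoting the partial sums $u_n = \sum_{k=0}^n v_k$, the very definition of \eqref{new-iteration} gives
$$ u_n(t,x) = S_t\phi(x) + \int_0^t S_{t-s}\bigl(\<B, Du_{n-1}(s)\>\bigr)(x)\,\d s, \quad n\geq 1, $$
and I would take $n\to \infty$ on both sides. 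The left-hand side converges to $u(t)$ in $L^{\bar p}(H,\mu)$. For the right-hand side, fix an auxiliary $\bar p' \in (1,\bar p)$ and $q \in (1,\infty)$ with $1/\bar p' = 1/\bar p + 1/q$; the sublinear growth of $B$ together with Corollary \ref{cor-moment} yields $B \in L^q(H,\mu;H)$, so that
$$ \bigl\|\<B, Du_{n-1}(s)\> - \<B, Du(s)\>\bigr\|_{L^{\bar p'}} \leq \|B\|_{L^q}\, \|D(u_{n-1}-u)(s)\|_{L^{\bar p}}. $$
Combining the contractivity of $S_{t-s}$ on $L^{\bar p'}(H,\mu)$ with dominated convergence using the integrable majorant $C\, s^{-\delta}$ (integrable since $\delta<1$), I can conclude that $u$ satisfies \eqref{mild-sol} as an identity in $L^{\bar p'}$ and hence in $L^{\bar p}$, since both sides are already known to lie there.

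For uniqueness, let $u_1, u_2$ be two solutions in the prescribed class and set $w = u_1 - u_2$, so that
$$ w(t) = \int_0^t S_{t-s}\bigl(\<B, Dw(s)\>\bigr)\,\d s, \qquad t\in [0,T]. $$
Iterating this identity $n$ times produces $w(t) = w^{(n)}(t)$, where $w^{(n)}$ is built by an $n$-fold substitution of $w$ into the right-hand side. The key observation is that the assumption $t^\delta Dw(t) \in C\bigl([0,T], L^{\bar p}(H,\mu;H)\bigr)$ means that $\|Dw(s)\|_{L^{\bar p}}$ carries exactly the same $s^{-\delta}$ blow-up as $\|DS_s\phi\|_{L^{p_0}}$ in the existence argument (cf.\ Proposition \ref{derivative-formula}). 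Consequently, picking an auxiliary exponent $\bar p'' \in (1,\bar p)$ and constructing new sequences $\{\tilde p_n\}, \{\tilde q_n\}$ as in \eqref{paramets-q} so that $\tilde p_n > \bar p''$ for every $n$, and repeating the H\"older chain of Theorem \ref{thm-estim} with the finite constant $M := \sup_{s\in [0,T]} s^\delta \|Dw(s)\|_{L^{\bar p}}$ in place of $\|\phi\|_{L^{p_0}}$, I obtain an estimate of the form
$$ \|w^{(n)}(t)\|_{L^{\tilde p_n}} \leq M\, C_\delta^n C_\beta^n ({\rm Tr}\, Q_\infty)^{n\beta/2}\, [(n+\tilde n_0)!]^{\beta\kappa/2}\, \frac{\Gamma(1-\delta)^n}{\Gamma(1+n(1-\delta))}\, t^{n(1-\delta)}, $$
whose right-hand side tends to $0$ as $n\to \infty$ by the same ratio-test computation used in the proof of Proposition \ref{prop-convergence}. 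Since $w = w^{(n)}$ and $\tilde p_n > \bar p''$, this forces $\|w(t)\|_{L^{\bar p''}} = 0$ for every $t\in [0,T]$, hence $w \equiv 0$ $\mu$-a.e.

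The most delicate step I anticipate is the dominated-convergence passage in the existence argument: one must arrange the H\"older exponents so that a uniform-in-$n$ integrable majorant genuinely controls the $s^{-\delta}$ singularity near $s=0$, and then verify that the identity obtained in $L^{\bar p'}$ automatically lifts to $L^{\bar p}$ (which it does because both sides already live there by the convergence provided by Proposition \ref{prop-convergence}). The analogous difficulty in the uniqueness argument is that the iteration starts from an object carrying only the $s^{-\delta}$ blow-up of $Dw$ rather than the more regular $DS_t\phi$; verifying that this singularity is not worsened by the successive convolutions is exactly where the restriction $\delta<1$, and the simultaneous control of the Gamma quotient against the factorial factor $[(n+\tilde n_0)!]^{\beta\kappa/2}$ guaranteed by \eqref{paramets}, become indispensable.
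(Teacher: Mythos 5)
Your argument is correct and follows essentially the same route as the paper: existence by passing to the limit in the iteration $u_n = S_t\phi + \int_0^t S_{t-s}\<B,Du_{n-1}(s)\>\,\d s$ using Proposition \ref{prop-convergence} and the H\"older/contraction estimates, and uniqueness by iterating the mild identity for the difference with a fresh exponent sequence and killing the bound via the Gamma-quotient ratio test. The only cosmetic differences are that for existence you use a single auxiliary H\"older split to a smaller exponent $\bar p'$ where the paper telescopes through the sequence $\{p_i,q_i\}$, and for uniqueness the paper phrases the iteration on $Dw$ before returning to $w$ at the end, which is what your ``$n$-fold substitution'' amounts to.
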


\begin{proof}
\emph{Step 1: Existence}. According to the iteration scheme, we have
  \begin{equation}\label{thm-existence.1}
  \aligned
  u_n(t,x) &= S_t \phi(x) + \int_0^t S_{t-s}\big( \<B, D u_{n-1}(s)\> \big)(x)\,\d s \\
  &= S_t \phi(x) + \sum_{i=0}^{n-1} \int_0^t S_{t-s}\big( \<B, D v_{i}(s)\> \big)(x)\,\d s .
  \endaligned
  \end{equation}
The left hand side converges in $C \big([0,T], L^{\bar p}(H,\mu) \big)$ to the limit $u(t,x)$. Next, we show the sum on the right hand side is also convergent. Indeed, for any $n<m$,
  $$\aligned
  \sum_{i=n}^{m} \int_0^T \big\| S_{t-s}\big( \<B, D v_{i}(s)\> \big)\big\|_{L^{\bar p}}\,\d s & \leq \sum_{i=n}^{m} \int_0^T \big\| \<B, D v_{i}(s)\> \big\|_{L^{\bar p}}\,\d s \\
  &\leq \sum_{i=n}^{m} \int_0^T \big\| \<B, D v_{i}(s)\> \big\|_{L^{p_{i+1}}}\,\d s \\
  &\leq \sum_{i=n}^{m} \|B\|_{L^{q_{i+1}}} \int_0^T \big\| D v_{i}(s) \big\|_{L^{p_i}}\,\d s,
  \endaligned $$
where in the last two steps we have used the fact that $\bar p <p_{i+1}$ and H\"older's inequality with the exponents $\frac1{p_{i+1}} = \frac1{p_i} + \frac1{q_{i+1}}$, see \eqref{exponents}. By Corollary \ref{cor-estim}, we have
  $$\aligned
  &\ \sum_{i=n}^{m} \int_0^T \big\| S_{t-s}\big( \<B, D v_{i}(s)\> \big)\big\|_{L^{\bar p}}\,\d s \\
  \leq &\ \sum_{i=n}^{m} \|B\|_{L^{q_{i+1}}} \|\phi\|_{L^{p_0}} C^i ({\rm Tr}\, Q_\infty)^{i\beta /2} \big[(i+n_0+1)!\big]^{\beta \kappa/2} \frac{\Gamma(1-\delta)^{i+1}} {\Gamma((i+1) (1-\delta))} \int_0^T s^{i(1-\delta)- \delta} \,\d s \\
  \leq &\ \|\phi\|_{L^{p_0}} \sum_{i=n}^{m} \|B\|_{L^{q_{i+1}}} C^i ({\rm Tr}\, Q_\infty)^{i\beta /2} \big[(i+n_0+1)!\big]^{\beta \kappa/2} \frac{\Gamma(1-\delta)^{i+1}} {\Gamma((i+1) (1-\delta))} \frac{T^{(i+1)(1-\delta)}}{(i+1)(1-\delta)},
  \endaligned  $$
where $C= C_{\delta, \beta}>0$. Recalling the proof of Lemma \ref{lem-estim-moments}, we have
  $$\frac{\|B\|_{L^{q_{i+1}}(\mu)}}{i+1} \leq \tilde C_\beta ({\rm Tr}\, Q_\infty)^{\beta/2} \frac{q_{i+1}^{\beta/2}}{i+1} \leq \tilde C_\beta ({\rm Tr}\, Q_\infty)^{\beta/2} \frac{(i+n_0+1)^{\beta\kappa /2}}{i+1},$$
which, due to $\beta\kappa /2<1$, tends to 0 as $i\to \infty$. Therefore, by the proof of Proposition \ref{prop-convergence}, the above sum tends to 0 as $m>n \to \infty$. Finally we let $n\to \infty$ in \eqref{thm-existence.1} to conclude that the limit $u(t,x)$ solves the mild formulation of the Kolmogorov equation \eqref{KolE}.

\emph{Step 2: Uniqueness}. Let $u_1$ and $u_2$ be two solutions with the above-mentioned properties. Fix any $\tilde p, \bar p \in (1,p_0)$ with $\tilde p< \bar p$. Then we have $u_i\in C\big([0,T], L^{\bar p}(H, \mu) \big)$ and $t^\delta Du_i(t) \in C\big([0,T], L^{\bar p}(H, \mu;H) \big)$, $i=1,2$.

Similarly as the discussions at the beginning of this section, we can define two sequences $\{\bar p_n\}_{n\in \N}$ and $\{\bar q_n\}_{n\in \N}$ with the properties below:
\begin{itemize}
\item $\bar p_0 =\bar p$;
\item $\frac1{\bar p_n} = \frac1{\bar p_{n-1}} + \frac1{\bar q_n}$ for all $ n\in \N $;
\item $\bar p_n >\tilde p$ for all $n\in \N$.
\end{itemize}
Indeed, it suffices to define
  $$\bar q_n = (n+ n_1)^\kappa, \quad n\in \N,$$
where $n_1\in \N$ verifies
  $$\sum_{n= n_1}^\infty \frac1{n^\kappa} < \frac1{\tilde p} - \frac1{\bar p} .$$

Since $u_1$ and $u_2$ solve the mild formulation \eqref{mild-sol}, we have
  \begin{equation}\label{eq-uniqueness}
  u_1(t) - u_2(t) = \int_0^t S_{t-s}\big(\<B, D(u_1(s) - u_2(s))\>\big)\,\d s.
  \end{equation}
Therefore,
  $$D(u_1(t) - u_2(t))= \int_0^t D S_{t-s}\big(\<B, D(u_1(s) - u_2(s))\>\big)\,\d s.$$
Proposition \ref{derivative-formula} implies, for any $n>1$,
  $$\aligned
  \|D(u_1(t) - u_2(t))\|_{L^{\bar p_n}} &\leq \int_0^t \big\| D S_{t-s}\big(\<B, D(u_1(s) - u_2(s))\>\big) \big\|_{L^{\bar p_n}} \,\d s \\
  & \leq \int_0^t \|\Lambda(t-s)\|_{\L(H)} \big\| \<B, D(u_1(s) - u_2(s))\> \big\|_{L^{\bar p_n}} \,\d s.
  \endaligned $$
By H\"older's inequality,
  $$\|D(u_1(t) - u_2(t))\|_{L^{\bar p_n}} \leq \int_0^t \|\Lambda(t-s)\|_{\L(H)} \|B\|_{L^{\bar q_n}} \| D(u_1(s) - u_2(s))\|_{L^{\bar p_{n-1}}} \,\d s. $$
Repeating the above procedure, we obtain
  $$\aligned
  \|D(u_1(t) - u_2(t))\|_{L^{\bar p_n}}\leq \Bigg[ \prod_{i=1}^n \|B\|_{L^{\bar q_i}} \Bigg] & \int_0^t \d s_n \int_0^{s_n} \d s_{n-1} \cdots \int_0^{s_2} \d s_1  \\
  & \Bigg[ \prod_{i=1}^n \|\Lambda(s_{i+1} -s_i)\|_{\L(H)} \Bigg] \| D(u_1(s_1) - u_2(s_1))\|_{L^{\bar p_{0}}} ,
  \endaligned $$
where $s_{n+1} =t$. According to the assumption on $s^\delta D u_i(s)\ (i=1,2)$, there exists a constant $C>0$ such that
  $$\| D(u_1(s) - u_2(s))\|_{L^{\bar p_{0}}} \leq C/ s^\delta, \quad s>0.$$
Combining these facts with Hypothesis \ref{hypothe-1}-(iv$'$), we get
  $$\|D(u_1(t) - u_2(t))\|_{L^{\bar p_n}}\leq C C_\delta^n \Bigg[ \prod_{i=1}^n \|B\|_{L^{\bar q_i}} \Bigg] \int_0^t \d s_n \int_0^{s_n} \d s_{n-1} \cdots \int_0^{s_2} \d s_1 \, \prod_{i=0}^n \frac1{(s_{i+1} -s_i)^\delta}, $$
where $s_0=0$. Now applying Lemma \ref{lem-estim-moments} yields (replacing $q_i$ by $\bar q_i$)
  $$\prod_{i=1}^n \|B\|_{L^{\bar q_i}} \leq C_\beta^n ({\rm Tr}\, Q_\infty)^{n\beta /2} \big[(n+n_1)!\big]^{\beta \kappa/2}. $$
This together with Lemma \ref{lem-multiple-integral} leads to
  $$\|D(u_1(t) - u_2(t))\|_{L^{\bar p_n}}\leq C^n ({\rm Tr}\, Q_\infty)^{n\beta /2} \big[(n+n_1)!\big]^{\beta \kappa/2} \frac{\Gamma(1-\delta)^{n+1}} {\Gamma((n+1) (1-\delta))} t^{n(1-\delta)- \delta}, $$
where $C=C_{\delta, \beta}$. Since $\bar p_n> \tilde p$ for all $n\in \N$, we finally obtain
  $$\|D(u_1(t) - u_2(t))\|_{L^{\tilde p}}\leq C^n ({\rm Tr}\, Q_\infty)^{n\beta /2} \big[(n+n_1)!\big]^{\beta \kappa/2} \frac{\Gamma(1-\delta)^{n+1}} {\Gamma((n+1) (1-\delta))} t^{n(1-\delta)- \delta}.$$
The proof of Proposition \ref{prop-convergence} shows that, for any $t>0$, the right hand side vanishes as $n\to \infty$. Therefore, $Du_1(t)= Du_2(t)$ for all $t\in (0, T]$. Taking into account \eqref{eq-uniqueness}, we conclude the uniqueness of solutions.
\end{proof}

\section{Appendix: some moment estimates on Gaussian measures}

We have the following estimate on the moments of $\mu= N_{Q_\infty} = N_{0, Q_\infty}$.

\begin{lemma}\label{lem-moment}
For any $n\in \N$,
  \begin{equation}\label{lem-moment.1}
  \int_{H} |x|^{2n} \,\d\mu(x) \leq 2^n (n!) ({\rm Tr}\, Q_\infty)^{n}.
  \end{equation}
\end{lemma}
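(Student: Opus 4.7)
The plan is to reduce to a one-dimensional computation via the Karhunen--Lo\`eve expansion and then apply Jensen's inequality.

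First I would diagonalise $Q_\infty$. Since $Q_\infty$ is a nonnegative, self-adjoint, trace class operator on $H$, there exists an orthonormal basis $\{e_k\}_{k\geq 1}$ of $H$ and a sequence of nonnegative eigenvalues $\{\lambda_k\}_{k\geq 1}$ with $Q_\infty e_k = \lambda_k e_k$ and $\sum_k \lambda_k = \mathrm{Tr}\,Q_\infty =: T$. If $\{\xi_k\}_{k\geq 1}$ are i.i.d.\ standard real Gaussians on some auxiliary probability space, then
$$ X := \sum_{k\geq 1} \sqrt{\lambda_k}\, \xi_k\, e_k $$
converges in $L^2$ and has law $\mu = N_{Q_\infty}$; moreover $|X|^2 = \sum_{k} \lambda_k \xi_k^2$. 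So it suffices to bound $\mathbb{E}\big(\sum_k \lambda_k \xi_k^2\big)^n$.

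The key step is a simple Jensen/H\"older estimate. Viewing $\{\lambda_k/T\}_{k\geq 1}$ as a probability distribution on $\mathbb{N}$ and using that $t\mapsto t^n$ is convex on $[0,\infty)$,
$$ \Bigl(\sum_k \lambda_k \xi_k^2\Bigr)^n = T^n\Bigl(\sum_k \tfrac{\lambda_k}{T}\,\xi_k^2\Bigr)^n \leq T^{n-1} \sum_k \lambda_k \xi_k^{2n} , $$
a pointwise inequality (on a finite truncation, passing to the limit by monotone convergence). Taking expectation and using $\mathbb{E}\xi_k^{2n} = (2n-1)!!$, we obtain
$$ \int_H |x|^{2n}\,\d\mu(x) = \mathbb{E}|X|^{2n} \leq T^{n-1} (2n-1)!! \sum_k \lambda_k = (2n-1)!!\, T^n . $$

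To conclude, I would apply the elementary bound $(2n-1)!! \leq 2^n n!$, which follows at once from the identity $(2n-1)!! = (2n)!/(2^n n!)$ together with $\binom{2n}{n}\leq 4^n$. Putting everything together yields the announced estimate $\int_H |x|^{2n}\,\d\mu(x) \leq 2^n (n!)(\mathrm{Tr}\,Q_\infty)^n$. There is no real obstacle in this proof: the only point requiring a brief justification is the interchange of expectation and the limit of finite truncations in the Karhunen--Lo\`eve expansion, which is standard under the trace class assumption on $Q_\infty$.
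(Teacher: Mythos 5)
Your proof is correct, but it takes a genuinely different route from the paper. The paper works with the moment generating function $F(s)=\int_H e^{s|x|^2}\,\d\mu(x)=[\det(1-2sQ_\infty)]^{-1/2}$, differentiates the identity $F'(s)=F(s)\,{\rm Tr}\big(Q_\infty(1-2sQ_\infty)^{-1}\big)$ $n$ times via the Leibniz formula, bounds ${\rm Tr}(Q_\infty^{k+1})\leq({\rm Tr}\,Q_\infty)^{k+1}$, and closes an induction on $n$. You instead diagonalise $Q_\infty$, represent $\mu$ through the Karhunen--Lo\`eve expansion so that $|X|^2=\sum_k\lambda_k\xi_k^2$, and apply Jensen's inequality with the probability weights $\lambda_k/{\rm Tr}\,Q_\infty$ to reduce everything to the one-dimensional moments $\E\,\xi^{2n}=(2n-1)!!$. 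Each step checks out: the convexity inequality is valid for the countable convex combination (or on finite truncations followed by monotone convergence, as you say), and $(2n-1)!!=(2n)!/(2^n n!)=\binom{2n}{n}n!/2^n\leq 2^n n!$ is elementary. Your argument is arguably more transparent and it even yields the sharper intermediate bound $(2n-1)!!\,({\rm Tr}\,Q_\infty)^n$, which is exact in dimension one; the paper's derivative computation is what one would reach for if one wanted the exact combinatorial structure of all the moments ${\rm Tr}(Q_\infty^{k+1})$ before discarding it with the rough trace inequality. For the purposes of Corollary \ref{cor-moment} (the $C\sqrt{p}$ growth of the $L^p$ norms), the two bounds are equivalent.
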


\begin{proof}
To save notation we write $Q$ instead of $Q_\infty$ in the proof below. We use the following fact (see \cite[Proposition 1.13]{DaPrato-1}):
  $$F(s):= \int_H e^{s|x|^2}\,\d\mu(x)= \big[{\rm det}(1-2s Q) \big]^{-1/2}, \quad s< 1/(2\lambda_1), $$
where $\lambda_1>0$ is the biggest eigenvalue of $Q$. It is clear that
  \begin{equation}\label{lem-moment.1.5}
  F^{(n)}(0)= \int_{H} |x|^{2n} \,\d\mu(x).
  \end{equation}
One has the useful identity (see \cite[Example 1.15]{DaPrato-1}):
  $$F'(s)= F(s) {\rm Tr}\big(Q(1-2s Q)^{-1}\big). $$
Then by the combinatorial formula,
  $$F^{(n+1)}(s) = \frac{\d^n}{\d s^n} \big[F(s) {\rm Tr}\big(Q(1-2s Q)^{-1}\big) \big] = \sum_{k=0}^n \binom n k F^{(n-k)}(s) \frac{\d^k}{\d s^k} {\rm Tr}\big(Q(1-2s Q)^{-1}\big),$$
where $\binom n k$ are the combinatorial numbers. Regarding $Q$ as a diagonal matrix and using induction, it is easy to show that
  $$\frac{\d^k}{\d s^k} {\rm Tr}\big(Q(1-2s Q)^{-1}\big) = 2^k (k!) {\rm Tr}\big(Q^{k+1}(1-2s Q)^{-(k+1)}\big). $$
Therefore,
  $$F^{(n+1)}(0) = \sum_{k=0}^n 2^k (k!) \binom n k F^{(n-k)}(0) {\rm Tr}\big(Q^{k+1} \big).$$
Using the very rough inequality
  \begin{equation}\label{ineq-trace}
  {\rm Tr}\big(Q^{k+1} \big) \leq ({\rm Tr}\, Q)^{k+1},
  \end{equation}
we obtain
  \begin{equation}\label{lem-moment.2}
  F^{(n+1)}(0) \leq \sum_{k=0}^n 2^k (k!) \binom n k F^{(n-k)}(0) ({\rm Tr}\, Q)^{k+1}.
  \end{equation}

Now we prove the assertion by induction. Assume that the estimate \eqref{lem-moment.1} holds for $k\leq n$; we want to prove it for $k=n+1$. Taking into account \eqref{lem-moment.1.5}, this follows immediately from \eqref{lem-moment.2}. Indeed, by the induction hypothesis,
  $$\aligned
  F^{(n+1)}(0) &\leq \sum_{k=0}^n 2^k (k!) \binom n k 2^{n-k} ((n-k)!) ({\rm Tr}\, Q)^{n-k} ({\rm Tr}\, Q)^{k+1} \\
  &= 2^n ({\rm Tr}\, Q)^{n+1} \sum_{k=0}^n (n!) \leq 2^{n+1} ((n+1)!) ({\rm Tr}\, Q)^{n+1} .
  \endaligned $$
The proof is complete.
\end{proof}

\begin{remark}
The estimate \eqref{ineq-trace} looks very rough, but in a sense it is also sharp. For instance, assume that $Q$ is a diagonal matrix and that the entries on the diagonal are decreasing. Let $Q_{22} \downarrow 0$ (thus $Q_{ii} \downarrow 0$ for all $i>2$), then ${\rm Tr}\big(Q^{k+1} \big) \to Q_{11}^{k+1}$ and $({\rm Tr}\, Q)^{k+1} \to Q_{11}^{k+1}$.
\end{remark}

\begin{corollary}\label{cor-moment}
There exists a constant $C>0$ such that for any $p>1$,
  $$\bigg(\int_H |x|^p \,\d\mu(x)\bigg)^{1/p} \leq C({\rm Tr}\, Q_\infty)^{1/2} \sqrt{p}.$$
\end{corollary}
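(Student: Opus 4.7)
My plan is to reduce the general $p>1$ case to the even integer case $p=2n$ already controlled by Lemma \ref{lem-moment}, using the monotonicity of $L^p$-norms on a probability space.

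First I would handle the even integer case. By Lemma \ref{lem-moment},
$$ \left( \int_H |x|^{2n}\,\d\mu(x) \right)^{1/(2n)} \leq \bigl( 2^n (n!) \bigr)^{1/(2n)} ({\rm Tr}\,Q_\infty)^{1/2}. $$
Using the elementary bound $n! \leq n^n$ (or Stirling's formula for a sharper $n/e$), I get $(2^n n!)^{1/(2n)} \leq \sqrt{2n}$, so
$$ \left( \int_H |x|^{2n}\,\d\mu(x) \right)^{1/(2n)} \leq \sqrt{2}\,({\rm Tr}\,Q_\infty)^{1/2}\,\sqrt{n}. $$

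Next I would bootstrap to arbitrary $p>1$. Since $\mu$ is a probability measure, the map $p \mapsto \|\cdot\|_{L^p(\mu)}$ is non-decreasing, so for any $p>1$ I can pick the smallest integer $n$ with $2n \geq p$, namely $n = \lceil p/2 \rceil \leq (p+2)/2$. Then
$$ \left( \int_H |x|^{p}\,\d\mu(x) \right)^{1/p} \leq \left( \int_H |x|^{2n}\,\d\mu(x) \right)^{1/(2n)} \leq \sqrt{2}\,({\rm Tr}\,Q_\infty)^{1/2}\,\sqrt{n} \leq \sqrt{2}\,({\rm Tr}\,Q_\infty)^{1/2}\,\sqrt{(p+2)/2}. $$
Since $p>1$ yields $p+2 \leq 3p$, one has $\sqrt{(p+2)/2} \leq \sqrt{3p/2}$, which gives the claimed bound with $C = \sqrt{3}$ (or one absorbs the constant further).

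I don't anticipate any real obstacle here: the only mildly delicate point is that Lemma \ref{lem-moment} only gives information at even integer exponents, so one must exploit the monotonicity of $L^p$-norms on a probability space to interpolate upward to non-integer $p$. The loss in passing from $p$ to $2\lceil p/2\rceil$ is at most a multiplicative constant, which is absorbed into $C$.
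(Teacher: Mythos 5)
Your proposal is correct and follows essentially the same route as the paper: both reduce to the even-moment bound of Lemma \ref{lem-moment} and then use monotonicity of $L^p(\mu)$-norms on the probability space to reach general $p>1$. The only cosmetic differences are that you bound $(n!)^{1/(2n)}$ via $n!\leq n^n$ rather than Stirling, and you round $p$ up to the even integer $2\lceil p/2\rceil$ directly instead of first passing through $\lceil p\rceil$; both yield the same estimate up to the value of $C$.
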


\begin{proof}
By Lemma \ref{lem-moment} and the Stirling formula, for any $n\in \N$,
  $$\bigg( \int_{H} |x|^{2n} \,\d\mu(x) \bigg)^{1/(2n)} \leq (2\, {\rm Tr}\, Q_\infty)^{1/2} (n!)^{1/(2n)} \leq C ( {\rm Tr}\, Q_\infty )^{1/2} \sqrt{n}\, . $$
As a result,
  $$\bigg( \int_{H} |x|^{n} \,\d\mu(x) \bigg)^{1/n} \leq C ( {\rm Tr}\, Q_\infty )^{1/2} \sqrt{n}\, . $$
Hence, for any $p>1$, denoting by $\lceil p \rceil$ the smallest integer which is greater than $p$,
  $$\bigg(\int_H |x|^p \,\d\mu(x)\bigg)^{1/p} \leq \bigg(\int_H |x|^{\lceil p \rceil} \,\d\mu(x)\bigg)^{1/\lceil p \rceil} \leq C ( {\rm Tr}\, Q_\infty )^{1/2} \sqrt{\lceil p \rceil} $$
which implies the desired estimate with the constant $\sqrt{2}\, C$.
\end{proof}


\begin{thebibliography}{99}

\bibitem{Billingsley} P. Billingsley, Convergence of Probability Measures. Second edition. Wiley Series in Probability and Statistics: Probability and Statistics. A Wiley-Interscience Publication. \emph{John Wiley \& Sons, Inc., New York,} 1999.

\bibitem{Cerrai-94} S. Cerrai,  A Hille-Yosida Theorem for weakly continuous semigroups. \emph{Semigroup Forum} \textbf{49} (1994), 349--367.

\bibitem{Cerrai} S. Cerrai, Weakly continuous semigroups in the space of functions with polynomial growth. \emph{Dynam. Systems Appl.} \textbf{4} (1995), 351--372.

\bibitem{DaPrato} G. Da Prato, Kolmogorov Equations for Stochastic PDEs. Advanced Courses in Mathematics. CRM Barcelona. \emph{Birkh\"auser Verlag, Basel}, 2004.

\bibitem{DaPrato-1} G. Da Prato, Introduction to Stochastic Analysis and Malliavin Calculus, Terza edizione, Edizioni della Normale, 2014.

\bibitem{DPF} G. Da Prato, F. Flandoli. Pathwise uniqueness for a class of SDE in Hilbert spaces and applications. \emph{J. Funct. Anal.} \textbf{259} (2010), 243--267.

\bibitem{DPFPR} G. Da Prato, F. Flandoli, E. Priola, M. R\"ockner, Strong uniqueness for stochastic evolution equations in Hilbert spaces perturbed by a bounded measurable drift. \emph{Ann. Probab.} \textbf{41} (2013), no. 5, 3306--3344.

\bibitem{DPFPR-15} G. Da Prato, F. Flandoli, E. Priola, M. R\"ockner, Strong uniqueness for stochastic evolution equations with unbounded measurable drift term. \emph{J. Theoret. Probab.} \textbf{28} (2015), no. 4, 1571--1600.

\bibitem{DPFRV} G. Da Prato, F. Flandoli, M. R\"ockner, A. Yu. Veretennikov, Strong uniqueness for SDEs in Hilbert spaces with nonregular drift. \emph{Ann. Probab.} \textbf{44} (2016), no. 3, 1985--2023.

\bibitem{DPZ} G. Da Prato, J. Zabczyk, Stochastic equations in infinite dimensions, Cambridge University Press, 1992.

\bibitem{DPZ-02} G. Da Prato, J. Zabczyk, Second order partial differential equations in Hilbert spaces. London Mathematical Society Lecture Note Series, 293. \emph{Cambridge University Press, Cambridge}, 2002.

\bibitem{Flandoli} F. Flandoli, Random Perturbation of PDEs and Fluid Dynamic Models, \'Ecol\'e d'\'Et\'e de Probabilit\'es de Saint-Flour XL - 2010, Lecture Notes in Mathematics 2015.

\bibitem{FLR} F. Flandoli, D. Luo, C. Ricci, A numerical approach to Kolmogorov equation in high dimension based on Gaussian analysis, revised version submitted to JMAA, see arXiv:1907.03332.

\bibitem{Fuhrman} M. Fuhrman, Hypercontractivity properties of nonsymmetric Ornstein-Uhlenbeck semigroups in Hilbert spaces. \emph{Stochastic Anal. Appl.} \textbf{16} (1998), no. 2, 241--260.

\bibitem{KY} A. Kohatsu-Higa, G. Y\^uki, Stochastic formulations of the parametrix method, ESAIM: PS 22 (2018), 178--209.

\bibitem{RS} M. R\"ockner, Z. Sobol, Kolmogorov equations in infinite dimensions: well-posedness and regularity of solutions, with applications to stochastic generalized Burgers equations. \emph{Ann. Probab.} \textbf{34} (2006), no. 2, 663--727.

\end{thebibliography}
\end{document}